\documentclass{article}

\usepackage{microtype}
\usepackage{graphicx}
\usepackage{subcaption}
\usepackage{booktabs} 
\usepackage{bm}
\usepackage{hyperref}

\usepackage{natbib}
\usepackage[preprint]{icml2026}

\usepackage{colortbl}
\usepackage{amsmath}
\usepackage{amssymb}
\usepackage{mathtools}
\usepackage{amsthm}
\allowdisplaybreaks[1]

\usepackage[normalem]{ulem}

\usepackage[capitalize,noabbrev]{cleveref}

\theoremstyle{plain}
\newtheorem{theorem}{Theorem}[section]

\newtheorem{lemma}[theorem]{Lemma}
\newtheorem{corollary}[theorem]{Corollary}
\theoremstyle{definition}

\newtheorem{assumption}[theorem]{Assumption}
\theoremstyle{remark}

\usepackage[textsize=tiny]{todonotes}
\usepackage{stfloats}
\usepackage{multirow}

\newcommand*{\rr}{\mathbb{R}}
\newcommand*{\abs}[1]{\left|#1\right|}
\newcommand*{\norm}[1]{\left\|#1\right\|}

\newcommand*{\innerproduct}[2]{\left\langle #1,#2\right\rangle}

\DeclareMathOperator*{\argmin}{arg\,min}

\newcommand*{\ma}{\mathcal{A}}
\newcommand*{\md}{\mathcal{D}}
\newcommand*{\mh}{\mathcal{H}}
\newcommand*{\mo}{\mathcal{O}}
\newcommand*{\ep}{\mathbb{E}}

\begin{document}

\icmltitlerunning{Byzantine-Robust Distributed SGD}
\twocolumn[
  \icmltitle{Byzantine-Robust Distributed SGD: A Unified Analysis and Tight Error Bounds} 

  \icmlsetsymbol{equal}{*}

  \begin{icmlauthorlist}
    \icmlauthor{Boyuan Ruan}{boyuan}
    \icmlauthor{Xiaoyu Wang}{xiaoyu}
    \icmlauthor{Ya-Feng Liu}{yafeng}
  \end{icmlauthorlist}

  \icmlaffiliation{boyuan}{School of Advanced Interdisciplinary Sciences, University of Chinese Academy of Sciences, Beijing 100049, China}
  \icmlaffiliation{xiaoyu}{School of Mathematical Sciences, University of Chinese Academy of Sciences, Beijing 100049, China}
  \icmlaffiliation{yafeng}{Ministry of Education Key Laboratory of Mathematics and Information Networks, School of Mathematical Sciences, Beijing University of Posts and Telecommunications, Beijing 102206, China}

  \icmlcorrespondingauthor{Xiaoyu Wang}{wangxiaoyu@ucas.ac.cn}
  \icmlcorrespondingauthor{Ya-Feng Liu}{yafengliu@bupt.edu.cn}

  \vskip 0.3in
]



\printAffiliationsAndNotice{}  

\begin{abstract}
Byzantine-robust distributed optimization relies on robust aggregation rules to mitigate the influence of malicious Byzantine workers. Despite the proliferation of such rules, a unified convergence analysis framework that accommodates general data heterogeneity is lacking. In this work, we provide a thorough convergence theory of Byzantine-robust distributed stochastic gradient descent (SGD), analyzing variants both with and without local momentum.  We establish the convergence rates for nonconvex smooth objectives and those satisfying the Polyak-\L{}ojasiewicz condition under a general data heterogeneity assumption. Our analysis reveals that while stochasticity and data heterogeneity introduce unavoidable error floors, local momentum provably reduces the error component induced by stochasticity. Furthermore, we derive matching lower bounds to demonstrate that the upper bounds obtained in our analysis are tight and characterize the fundamental limits of Byzantine resilience under stochasticity and data heterogeneity.  Empirical results support our theoretical findings.

\end{abstract}

\section{Introduction}
Distributed machine learning has emerged as a dominant paradigm for large-scale learning tasks, enabling efficient model training across dispersed data sources \cite{alistarh2018byzantine, kairouz2021advances, wang2021field, guerraoui2024byzantine}. In a standard centralized distributed machine learning setting, a server coordinates multiple workers that compute on local data to collaboratively train a global model. Despite its scalability, this paradigm introduces significant vulnerabilities. Specifically, the presence of Byzantine workers \cite{lamport1982byzantine}—arising from malicious attacks, hardware malfunctions, or data poisoning—can introduce arbitrarily incorrect updates that undermine the entire training process. This threat has motivated extensive research into Byzantine-resilient algorithms \cite{blanchard2017machine, chen2017distributed, yin2018byzantine, xie2018generalized, li2019rsa, karimireddy2021learning, pillutla2022robust,allouah2023fixing, guerraoui2024byzantine}. These algorithms replace the standard averaging step in distributed optimization with robust aggregation rules designed to filter outliers and preserve the integrity of the learning process. However, the design and analysis of such rules face two major challenges.

    The first challenge is \emph{data heterogeneity}. Early analyses of robust aggregation rules typically assume data homogeneity, where samples across workers are independent and identically distributed \cite{blanchard2017machine, xie2018generalized, yin2018byzantine, wu2020federated}. While data homogeneity simplifies the detection of Byzantine updates, it rarely holds in real-world scenarios such as federated learning \citep{konevcny2016federated, brendan2017communication, kairouz2021advances}, where local data distributions are inherently heterogeneous. This heterogeneity causes honest updates to naturally diverge, which confuses the aggregation rules in distinguishing honest updates from malicious ones and in turn degrades model accuracy~\citep{data2021byzantine, allouah2023robust, shi2025optimal}.

    The second challenge is \emph{stochastic noise}. Given the massive size of modern datasets, workers typically rely on mini-batch gradients rather than full-batch gradients during local computations \cite{nemirovski2009robust, bottou2018optimization}. Stochastic noise in mini-batch gradients further enlarges discrepancies among worker updates and misleads robust aggregation rules. The effect of stochastic noise cannot be mitigated unless local momentum or variance reduction techniques are introduced~\cite{karimireddy2021learning, farhadkhani2022byzantine, allouah2023fixing, gorbunov2023variance}.

    Therefore, understanding the joint effect of data heterogeneity and stochastic noise on Byzantine-resilient algorithms 
    requires a unified and general analysis. In particular, we focus on two state-of-the-art algorithmic frameworks: \textbf{robust distributed SGD (R-DSGD)} and \textbf{robust distributed SGD with local momentum (R-DSGD-M)}. These are natural, Byzantine-resilient extensions of the widely-used distributed SGD (DSGD) algorithm, with or without momentum. To contextualize our work, we first provide a brief review of the relevant literature on Byzantine-robust distributed optimization.
    \begin{table*}[t]
        \centering
        \caption{A comparison of prior work with our convergence results of R-DSGD and R-DSGD-M. Here, $\sigma^2$ refers to the stochastic noise bound, $\mu$ is the PL condition parameter, and $\Delta_0:=f_{\mh}(x^0)-f_{\mh}^*$ represents the initial gap on the global objective function. The \textbf{bold terms} indicate the Byzantine error, which we prove to be tight for the first time. }\label{list of previous analysis}
        \renewcommand{\arraystretch}{1.5}
        \begin{tabular}{|c|c|c|c|c|}
    \hline
    Settings & Algorithm & Heterogeneity & Convergence & Reference \\
    \hline
    \multirow{5}{*}{\begin{tabular}{c}
    
    smooth\\
    \end{tabular}}
    
    & R-DSGD & $(G,0)$ &
    $\mo\left(\frac{\sigma^2+\Delta_0}{\sqrt{T}}+\kappa\sigma^2+\kappa G^2\right)$
    & \cite{guerraoui2024robust} \\
    
    & \cellcolor{gray!15} R-DSGD & \cellcolor{gray!15} $(G,B)$ & \cellcolor{gray!15}
    $\mo\biggl(\frac{\sigma^2+\Delta_0}{\sqrt{T}}
    +\bm{\frac{\kappa\sigma^2+\kappa G^2}{1-\kappa B^2}
    }\biggr)$
    & \cellcolor{gray!15} Corollary \ref{coro:r-dsgd:nonconvex} \\
    \cline{2-5}
    & R-DSGD-M & $(G,0)$ &
    $\mo\left(\frac{(1+\kappa)\sigma^2+\Delta_0}{\sqrt{T}}+\kappa G^2\right)$
    & \cite{allouah2023fixing} \\
    
    & R-DGD-M & $(G,B)$ &
    $\mo\left(\frac{\Delta_0}{T}+\frac{\kappa G^2}{1-\kappa B^2}\right)$
    & \cite{gupta2025reconciling} \\
    
    & \cellcolor{gray!15} R-DSGD-M & \cellcolor{gray!15} $(G,B)$ & \cellcolor{gray!15}
    $\mo\left(\frac{(1+\kappa)\sigma^2+\Delta_0}{\sqrt{T}}
    +\bm{\frac{\kappa G^2}{1-\kappa B^2}}\right)$
    &\cellcolor{gray!15} Corollary \ref{coro:r-dsgd-m:convergence} \\
    \hline
    \multirow{4}{*}{\begin{tabular}{c}
    
    smooth\\
    and PL
    \end{tabular}}
    & R-DSGD & $(G,0)$ &
    $\mo\left(\frac{(1+\kappa)\sigma^2+\kappa G^2}{T}
    +\frac{\kappa\sigma^2+\kappa G^2}{\mu}
    \right)$
    & \cite{guerraoui2024robust} \\
    
    & \cellcolor{gray!15} R-DSGD & \cellcolor{gray!15} $(G,B)$ & \cellcolor{gray!15}
    $\mo\left(\frac{(1+\kappa)\sigma^2+\kappa G^2}{T}
    +\bm{\frac{\kappa\sigma^2+\kappa G^2}{\mu(1-\kappa B^2)}}
    \right)$
    & \cellcolor{gray!15} Theorem \ref{convergence for PL R-DSGD} \\
    \cline{2-5}
    & R-DSGD-M & $(G,0)$ &
    $\mo\left(\frac{(1+\kappa)\sigma^2}{T}
    +\frac{\kappa G^2}{\mu}\right)$
    & \cite{guerraoui2024robust} \\
    
    & \cellcolor{gray!15} R-DSGD-M &\cellcolor{gray!15} $(G,B)$ &
    \cellcolor{gray!15}$\mo\left(\frac{(1+\kappa)\sigma^2}{T}
    +\bm{\frac{\kappa G^2}{\mu(1-\kappa B^2)}}\right)$
    &\cellcolor{gray!15} Theorem \ref{convergence for PL R-DSGD-M} \\
    \hline
    \end{tabular}

    \end{table*}
    
    \textbf{Related work.} Distributed optimization without Byzantine attacks has been extensively studied under various assumptions regarding data heterogeneity  \cite{li2020federated, karimireddy2020scaffold, woodworth2020minibatch, zhang2021fedpd, wang22wireless, zakerinia2024communication}.
    Among these, one of the most general conditions to date is the $(G,B)$-bounded dissimilarity~\cite{karimireddy2020scaffold}. To systematically analyze Byzantine-resilient algorithms under data heterogeneity, recent works have proposed unified conditions for robust aggregation rules \cite{karimireddybyzantine,farhadkhani2022byzantine,allouah2023fixing}. One prominent example is the $(b,\kappa)$-robustness condition \cite{allouah2023fixing}, which qualifies the ability of an aggregation rule to estimate the average of honest workers' outputs despite the presence of $b$ Byzantine workers. 
    
    Under this framework, several studies have established convergence guarantees for R-DSGD and R-DSGD-M (robust distributed SGD with and without momentum). However, existing results are limited by specific constraints. For instance, the convergence results of R-DSGD and R-DSGD-M are presented in  \citet{guerraoui2024robust}; yet, both are restricted to the simpler case of $(G,0)$-bounded dissimilarity. Furthermore, while  \citet{gupta2025reconciling} have explored the combination of communication compression and Byzantine robustness under the $(G,B)$-bounded dissimilarity, their analysis is restricted to deterministic algorithms. \emph{To the best of our knowledge, no prior work has provided a thorough analysis of R-DSGD and R-DSGD-M with $(b,\kappa)$-robust aggregation rules under the general $(G,B)$-bounded dissimilarity assumption.} 

    Prior research confirms that stochastic noise and data heterogeneity introduce unavoidable error floors in Byzantine-robust convergence. Regarding stochastic noise, \citet{karimireddy2021learning} have proved that any permutation-invariant robust algorithm must suffer a Byzantine error of $\mo(\frac{b}{n}\sigma^2)$. Concerning data heterogeneity, \citet{shi2025optimal} have established the lower bound $\mo\left(\kappa G^2\right)$ on the unavoidable error for a class of first-order algorithms under the $(G,0)$-bounded dissimilarity, while \citet{allouah2023robust} have derived the lower bound $\mo\left(\frac{b}{n-(2+B^2)}G^2\right)$ for any deterministic Byzantine-robust algorithm under the general $(G,B)$-bounded dissimilarity. However, these results do not fully match the error upper bound of $\mo(\kappa\sigma^2)$ and $\mo(\frac{\kappa G^2}{1-\kappa B^2})$ achieved in prior work~\cite{allouah2023robust, gupta2025reconciling}. 
\emph{Specifically, existing lower bounds are unable to fully characterize the dependence on $\kappa$ for general $(b,\kappa)$-robust aggregation rules under the $(G,B)$-bounded dissimilarity.} 
    
    \textbf{Our contributions.}
    This work provides the first tight analysis of R-DSGD and R-DSGD-M with $(b, \kappa)$-robust aggregation rules and under the general $(G,B)$-bounded dissimilarity assumption.
    In summary, our contributions are twofold:
    \begin{itemize}
        \item \textbf{Convergence analysis (upper bounds):} We establish that R-DSGD converges at a rate of $\mo(1/\sqrt{T})$ for nonconvex and smooth objective functions, subject to an error floor of $\mo\left(\frac{\kappa\sigma^2}{1-\kappa B^2} +\frac{\kappa G^2}{1-\kappa B^2}\right)$. Furthermore, we demonstrate that while introducing local momentum does not improve the convergence rate, it effectively eliminates the error term $\mo\left(\frac{\kappa\sigma^2}{1-\kappa B^2}\right)$ induced by stochastic noise. We extend our analysis to the case where the global objective function satisfies the PL condition and show that our framework directly recovers the results in the absence of Byzantine workers. To the best of our knowledge, these are the first theoretical results derived explicitly under the $(G,B)$-bounded dissimilarity assumption.
   
        \item \textbf{Tightness of analysis (lower bounds):} We prove that the heterogeneity-induced error $\mo\left(\frac{\kappa G^2}{1-\kappa B^2}\right)$ is unavoidable for both R-DSGD and R-DSGD-M. In addition, we show that the noise-induced error $\mo\left(\frac{\kappa\sigma^2}{1-\kappa B^2}\right)$ is unavoidable for R-DSGD. These lower bound results confirm the tightness of our upper bounds and provide the sharpest characterization of error limits to date under the $(G,B)$-bounded dissimilarity assumption.
    \end{itemize}
     A detailed comparison between our theoretical results and prior work is provided in Table~\ref{list of previous analysis}. Finally, we evaluate the empirical performance of R-DSGD and R-DSGD-M under various settings, demonstrating that our theoretical results are consistent with the empirical observations. 
    
    \textbf{Paper outline.} The rest of the paper is organized as follows. Section 2 reviews the problem setup and the algorithmic framework. Section 3 presents our main theoretical convergence results of R-DSGD and R-DSGD-M. Section 4 provides tight error lower bounds. Section 5 reports empirical results on benchmark learning tasks. Section 6 summarizes our contributions and outlines further research directions.
    
    \textbf{Notation.} Throughout this paper, $\norm{\cdot}$ denotes the Euclidean norm. We use the notation $[n]$ to represent the set  $\left\lbrace 1, 2,\ldots, n \right\rbrace$. For any real value $a\in\rr$, denote $\lfloor a\rfloor$ as the largest integer that is smaller than $a$. Superscripts indicates iteration indices (e.g., $x^t$ denotes the model at round $t$), while subscripts indicate worker indices or scalar components. We use the big-O notation $\mo(\cdot)$ to hide irrelevant constant factors and higher-order terms. Finally, $\ep[\cdot]$ denotes the total expectation over all sources of randomness.    
    \section{Problem Setup and Algorithmic Framework}
    \subsection{Problem Setup}
    We consider a centralized server-worker distributed network consisting of a central server and $n$ workers,  where each worker only communicates with the central server. The data distribution of the $i$-th worker is denoted by $\md_i$. For a model parameterized by $x\in\rr^d$ and a loss function $\ell$, the local objective function on the $i$-th worker is 
    defined as \[f_i(x)=\ep_{\xi_i\sim\md_i}\left[\ell(x,\xi_i)\right].\]
    We assume that among $n$ workers, $b$ of them may be Byzantine. Let $\mh$ denote the unknown index set of honest workers with cardinality $\abs{\mh}=h:=n-b$. Since the data held by Byzantine workers may be unreliable or adversarial, the goal is to minimize the global objective function over the honest workers $f_{\mh}$ \cite{allouah2023fixing, guerraoui2024robust}:
    \begin{equation}\label{the problem}
        \min_{x\in\rr^d} f_{\mh}(x):=\frac{1}{h}\sum_{i\in\mh} f_i(x).
    \end{equation}
    Note that if $b=0$, then $f_{\mh}=\frac1n\sum_{i=1}^nf_i$, recovering the standard objective function in distributed optimization.
    
    Throughout the paper, we assume that the function $f_{\mh}$ is bounded from below by $f_{\mh}^* :=\inf_{x\in\rr^d}f_{\mh}>-\infty$.
    We also impose the following standard assumptions in first-order optimization \cite{bottou2018optimization, nesterov2018lectures}. 
    \begin{assumption}[Smoothness]\label{smoothness}
        The global objective function $f_{\mh}$ is differentiable, and there exists a constant $L>0$ such that $f_{\mh}$ is $L$-smooth, i.e., for all $x,y\in\rr^d$,  
        \[f_{\mh}(x)\leq f_{\mh}(y)+\innerproduct{\nabla f_{\mh}(x)}{y-x}+\frac{L}{2}\norm{y-x}^2.\]
        This condition is equivalent to $\nabla f_{\mh}$ being Lipschitz continuous with constant $L$.
    \end{assumption}
    \begin{assumption}[Polyak-\L ojasiewicz (PL) condition]\label{pl}
        There exists a constant $\mu>0$ such that for all $x\in\rr^d$, 
        \[\norm{\nabla f_{\mh}(x)}^2\geq 2\mu(f_{\mh}(x)-f_{\mh}^*).\]
    \end{assumption}
    Data heterogeneity across workers arises from differences in the underlying distributions $\left\{\md_i\right\}$ and is reflected in the discrepancies among the local objective functions $\{f_i\}$. To model this heterogeneity among honest workers, extensive research on Byzantine-resilient algorithms relies on the \textit{bounded heterogeneity assumption} \cite{gorbunov2023variance,allouah2023fixing, cheng2024momentum, yang2025tension}: there exists a constant $G\geq 0$ such that for all $x\in\rr^d$, 
\[
    \frac{1}{h}\sum_{i\in\mh}\norm{\nabla f_i(x)-\nabla f_{\mh}(x)}^2\leq G^2.
\]
    However, this assumption is too restrictive to capture the benign scenarios such as over-parameterization, where honest local objective functions may be different but share the same minimizer. Furthermore, this bounded heterogeneity assumption fails in the simple least-squares regression \cite{allouah2023robust}. To address these limitations, we adopt the more general $(G,B)$-bounded dissimilarity assumption \cite{karimireddy2020scaffold, noble2022differentially}, which is standard in federated learning and has recently been applied to analyze Byzantine-resilient algorithms \cite{allouah2023robust, gupta2025reconciling, allouah2025adaptive}. 
    
    \begin{assumption}[$(G,B)$-bounded dissimilarity]\label{heterogeneity}
        There exist constants $G,B\geq 0$ such that for all $x\in\rr^d$,
        \[\frac{1}{h }\sum_{i\in\mh}\norm{\nabla f_i(x)-\nabla f_{\mh}(x)}^2\leq G^2+B^2\norm{\nabla f_{\mh}(x)}^2.\]
    \end{assumption}
    Intuitively, $G$ quantifies the discrepancy between the minimizers of the local and global objectives, while $B$ captures the scale difference between local and global gradients.
    \begin{algorithm}[t]
    \caption{R-DSGD-M }\label{R-DSGD-M}
    \begin{algorithmic}[1]
    \item[\textbf{Input}:]  initial model $x^0$, initial local momentum $m_i^0=0$ for all honest workers, momentum parameters $\{\beta_t\}$, stepsizes $\{\gamma_t\}$, robust aggregation rule $\ma$.
    \FOR{$t = 1, 2, \ldots$}
        \STATE \textit{Server} broadcasts $x^{t-1}$ to \textit{all workers};
        \FOR{every \textit{honest worker} $i \in \mh$ \textit{in paralleel}}
        \STATE Compute a stochastic gradient $g_{i}^{t}$ at $x^{t-1}$;
        \STATE Update the local momentum $m_i^t=\beta_t m_i^{t-1}+(1-\beta_t)g_i^t$;
        \STATE Send the local momentum $m_i^{t}$ to the server;
        \ENDFOR
        \STATE Every \textit{Byzantine worker} sends an arbitrary vector $m_j^t$ to the server;
        \STATE \textit{Server} aggregates the update vector using the aggregation rule $\ma$: $g^t=\ma(m_1^t,m_2^t,\ldots,m_n^t)$;
        \STATE \textit{Server} updates the model: $x^{t}=x^{t-1}-\gamma_t g^t$.
    \ENDFOR
    \item[\textbf{Output}:] model parameters $\{x^0,x^1,\ldots\}$.
    \end{algorithmic}
    \end{algorithm}
    \subsection{The R-DSGD-M Framework}    
    To solve problem (\ref{the problem}), we employ the widely-used framework R-DSGD-M outlined in Algorithm \ref{R-DSGD-M}. Specifically, the process begins with an initial model $x^0$ at the server and an initial momentum vector $m_i^0$ at each honest worker $i$. At the $t$-th iteration, the server broadcasts the current model $x^{t-1}$ to all workers. Upon receiving $x^{t-1}$, each honest worker computes a stochastic gradient $g_i^t$, and then updates its local momentum $m_i^t$.
    The honest workers then transmit their updated local momentum to the server, whereas the Byzantine workers may send arbitrary or adversarial vectors. The server subsequently applies an aggregation rule $\ma: \rr^{d\times n}\to\rr^d$ to compute the global update $g^t$ and performs a gradient descent step to obtain the new model $x^t$. When the momentum parameter is set to $\beta_t\equiv 0$, this R-DSGD-M framework reduces to R-DSGD.

    We impose the following assumption on the local stochastic gradients $\{g_i^t\}$ computed by honest workers, ensuring that $g_i^t$ is an unbiased estimator of $\nabla f_i(x^{t-1})$ with bounded variance. This is a standard assumption in the stochastic optimization literature \cite{bottou2018optimization, lan2020first}.
    \begin{assumption}[Stochastic first-order oracle]\label{sfo}
        The $i$-th honest local worker has access to a stochastic oracle that generates a stochastic approximation $g_i(x,\xi_i)$ for $\nabla f_i(x)$ at any $x\in\rr^d$ such that 
        \begin{align*}
            &\ep_{\xi_i\sim\md_i} [g_i(x,\xi_i)]=\nabla f_i(x),\\
            &\ep_{\xi_i\sim\md_i}\left[\norm{g_i(x,\xi_i)-\nabla f_i(x)}^2\right]\leq\sigma^2.
        \end{align*}
    Furthermore, we assume that the stochastic gradients generated by different workers are mutually independent.
    \end{assumption}
    
    In classical distributed SGD, the aggregation rule $\ma$ is simply the average of the $n$ local updates:
\[
    \ma(m_1^t,m_2^t,\ldots,m_n^t)=\frac{1}{n}\sum_{i=1}^nm_i^t.
\]
However, Byzantine workers can arbitrarily manipulate the simple average, thereby damaging the training process \cite{blanchard2017machine}. To mitigate the influence of such outliers, various robust aggregation rules have been proposed, including Krum \cite{blanchard2017machine}, coordinate-wise median (CwM) \cite{yin2018byzantine}, coordinate-wise trimmed mean (CwTM) \cite{yin2018byzantine}, geometric median (GM) \cite{chen2017distributed,wu2020federated}, and centered clipping (CC) \cite{karimireddy2021learning}.
Various conditions have been proposed to analyze the performance of these rules, such as $(b,\lambda)$-resilient averaging \cite{farhadkhani2022byzantine} and $(\delta_{\max},c)$-agnostic robust aggregation \cite{karimireddybyzantine}. Recently, the concept of $(b,\kappa)$-robustness is introduced in \citet{allouah2023fixing} to encompass these existing definitions.
    \begin{assumption}[$(b,\kappa)$-robustness]\label{aggregation}
        For a given number $b<n/2$ of Byzantine workers, there exists a constant $\kappa>0$ such that for any vectors $x_1, x_2,\ldots,x_n\in\rr^d$ and any subset $\mh\subset[n]$ with cardinality $\abs{\mh}=n-b$, the aggregation rule $\ma$ satisfies 
        \[\norm{\ma(x_1,x_2,\ldots,x_n)-\bar{x}_{\mh}}^2\leq\frac{\kappa}{h }\sum_{i\in\mh}\norm{x_i-\bar{x}_{\mh}}^2,\]
        where $\bar{x}_{\mh}=\frac1h\sum_{i\in\mh}x_i$.
    \end{assumption}
    The $(b,\kappa)$-robustness condition requires that the aggregated output $\ma(x_1,x_2,\ldots,x_n)$ remains close to the ideal average $\bar{x}_{\mh}$, with the error bounded by the dispersion of the honest inputs scaled by $\kappa$. Ideally, one desires $\kappa=0$ to ensure perfect aggregation for any selection of $\mh$. However, as demonstrated in \cite{allouah2023fixing}, an aggregation rule can be $(b,\kappa)$-robust only if $\kappa\geq \frac{b}{n-2b}$. 

    \textbf{On the scope of the $(b,\kappa)$-robustness framework.} We note that a distinct class of Byzantine robust algorithms leverage trust score instead of robust aggregation rules to discriminate malicious updates from honest ones \cite{xie2019zeno, Molodtsov2026bant}, which falls out of the scope of Assumption \ref{aggregation}. However, this kind of methods typical
    relies on availability of a clean auxiliary at the server. Since the existence of such trail dataset is often unavailable in practice, we focus on the $(b,\kappa)$-robustness framework, which contains the most commonly studied and theoretically grounded approaches in the ``blind'' server setting.

\section{Convergence Analysis of R-DSGD and R-DSGD-M}\label{sec:alg:convergence}
In this section, we present the convergence guarantees for R-DSGD and R-DSGD-M. We analyze these algorithms equipped with $(b,\kappa)$-robust aggregation rules under the general heterogeneity assumption of $(G,B)$-bounded dissimilarity. The big-$O$ notation $\mo(\cdot)$ in our theoretical guarantees omits the possible dependence on $L$ and $\mu$, as we aim to highlight the effects of $\kappa$, $G$, and $B$. The complete details are provided in the Appendices A and B.

\subsection{Analysis of R-DSGD}
We begin by establishing theoretical guarantees for R-DSGD on general nonconvex and smooth functions, followed by an improved analysis under the additional PL condition. The full proofs are provided in Appendix \ref{Proof of the Convergence Result for R-DSGD}.
    \begin{theorem}[\textbf{Nonconvex case}]\label{convergence for nonconvex R-DSGD}
        Suppose that Assumptions \ref{smoothness}, \ref{heterogeneity}-\ref{aggregation} hold, and let $0<\delta<1/4$ be any fixed constant. If $\kappa B^2<(1-4\delta)/(1+2\delta)$ and the stepsizes satisfy $\gamma_t\equiv \gamma \leq \delta/L$, then for any $T>0$, the iterates $\{x^t\}_{t=0}^{T-1}$ generated by R-DSGD satisfy
        \begin{align*}
            &\frac1T\sum_{t=0}^{T-1}\ep\left[\norm{\nabla f_{\mh}(x^{t})}^2\right]\\
            &\leq \frac{\Delta_0}{C_1\gamma T}+\frac{2\gamma L\sigma^2}{C_1h }
            +C_2\left(\kappa\sigma^2+\kappa G^2\right),
        \end{align*}
        where $\Delta_0=f_{\mh}(x^0)-f_{\mh}^*$, $
            C_1 =\frac12-2\delta-\kappa B^2\left(\frac12+\delta\right)=\mo\left(1-\kappa B^2\right) $,  and $C_2=\left(\frac12+\delta\right)C_1^{-1}$.
    \end{theorem}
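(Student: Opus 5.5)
We plan a one-step descent argument: decompose the aggregated update into the honest average plus an aggregation error, control that error by Assumption \ref{aggregation}, and then telescope. Write $\bar g^t=\frac1h\sum_{i\in\mh}g_i^t$ and $e^t=g^t-\bar g^t$, so $x^{t}=x^{t-1}-\gamma(\bar g^t+e^t)$. By Assumption \ref{smoothness},
\[
f_{\mh}(x^t)\le f_{\mh}(x^{t-1})-\gamma\innerproduct{\nabla f_{\mh}(x^{t-1})}{\bar g^t+e^t}+\frac{L\gamma^2}{2}\norm{\bar g^t+e^t}^2 .
\]
For the cross term with the error we use Young's inequality, $-\innerproduct{\nabla f_{\mh}}{e^t}\le \frac12\norm{\nabla f_{\mh}}^2+\frac12\norm{e^t}^2$. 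For the quadratic term we use $\norm{\bar g^t+e^t}^2\le 2\norm{\bar g^t}^2+2\norm{e^t}^2$. Taking conditional expectation, unbiasedness and independence (Assumption \ref{sfo}) give $\ep\innerproduct{\nabla f_{\mh}}{\bar g^t}=\norm{\nabla f_{\mh}}^2$ and $\ep\norm{\bar g^t}^2\le\norm{\nabla f_{\mh}}^2+\sigma^2/h$. The latter is the source of the term $2\gamma L\sigma^2/(C_1 h)$.

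The key step is bounding $\ep\norm{e^t}^2$. Assumption \ref{aggregation} gives $\norm{e^t}^2\le\frac{\kappa}{h}\sum_{i\in\mh}\norm{g_i^t-\bar g^t}^2$. We decompose $g_i^t-\bar g^t=(g_i^t-\nabla f_i)-(\bar g^t-\nabla f_{\mh})+(\nabla f_i-\nabla f_{\mh})$. Since the stochastic noise has zero mean and is independent of the deterministic part, the cross terms vanish in expectation, and the empirical variance of the noise is at most $\frac1h\sum\norm{g_i^t-\nabla f_i}^2$. Hence, by Assumption \ref{heterogeneity},
\[
\ep\norm{e^t}^2\le \kappa\sigma^2+\kappa G^2+\kappa B^2\norm{\nabla f_{\mh}(x^{t-1})}^2 .
\]
Exploiting this independence, rather than applying a crude $2(a+b)$ split, is what keeps the constants sharp.

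Combining the pieces, the coefficient of $\ep\norm{e^t}^2$ is $\gamma(\frac12+L\gamma)\le\gamma(\frac12+\delta)$ because $\gamma\le\delta/L$. The coefficient of $-\norm{\nabla f_{\mh}}^2$ is $\gamma(1-\frac12-L\gamma)\ge\gamma(\frac12-\delta)$. The $B^2$ part of the error bound eats $\gamma\kappa B^2(\frac12+\delta)$ of this coefficient. The remaining slack of $\delta$ in $C_1=\frac12-2\delta-\kappa B^2(\frac12+\delta)$ gives room for cruder constant choices, for instance the factor 2 in the $L\gamma^2\norm{\bar g^t}^2$ term contributing $L\gamma\le\delta$. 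The hypothesis $\kappa B^2<(1-4\delta)/(1+2\delta)$ is exactly $C_1>0$. We then obtain
\[
\gamma C_1\,\ep\norm{\nabla f_{\mh}(x^{t-1})}^2\le \ep f_{\mh}(x^{t-1})-\ep f_{\mh}(x^t)+\frac{2L\gamma^2\sigma^2}{h}+\gamma\left(\tfrac12+\delta\right)(\kappa\sigma^2+\kappa G^2).
\]
Summing over $t=1,\dots,T$, using $f_{\mh}(x^T)\ge f_{\mh}^*$, and dividing by $\gamma C_1 T$ yields the claim with $C_2=(\frac12+\delta)/C_1$.

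The main obstacle is the bookkeeping of constants. We must ensure that the $B^2\norm{\nabla f_{\mh}}^2$ term, produced both by the aggregation error in the inner product and by the smoothness quadratic, is absorbed with exactly the coefficient $\kappa B^2(\frac12+\delta)$. If the variance term $\ep\norm{\bar g^t}^2$ produces an additional $\norm{\nabla f_{\mh}}^2$ contribution, it must be charged to the $2\delta$ part of $C_1$. We would first try the Young split with parameter $1/2$ as above and adjust if the $2\delta$ margin does not close.
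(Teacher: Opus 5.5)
Your proposal is correct and follows essentially the same route as the paper. Both arguments use a descent step from smoothness with the split $g^t=\bar g^t+e^t$, Young's inequality on the error cross term, the robustness bound $\ep_t\norm{e^t}^2\le\kappa(\sigma^2+G^2+B^2\norm{\nabla f_{\mh}(x^{t-1})}^2)$, and then telescoping. The one difference is that you use the exact bound $\ep_t\norm{\bar g^t}^2\le\norm{\nabla f_{\mh}(x^{t-1})}^2+\sigma^2/h$, whereas the paper uses the cruder $2\norm{\nabla f_{\mh}(x^{t-1})}^2+2\sigma^2/h$; that factor $2$ is exactly where the $2\delta$ in $C_1$ and the factor $2$ in the noise term come from, so your constants are slightly better and imply the stated ones.
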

    Theorem \ref{convergence for nonconvex R-DSGD} establishes a general bound on the average squared gradient norm. By selecting a small constant stepsize, we derive the explicit convergence rate below.
    \begin{corollary}\label{coro:r-dsgd:nonconvex}
        Under the conditions of Theorem \ref{convergence for nonconvex R-DSGD}, for any $T\geq 1$, let $\gamma = \gamma_0/\sqrt{T}$ where $\gamma_0 \leq \delta/L$.
        
        Then the iterates $\{x^t\}_{t=0}^{T-1}$ generated by R-DSGD satisfy
        \begin{align*}
            &\frac1T\sum_{t=0}^{T-1}\ep\left[\norm{\nabla f_{\mh}(x^{t})}^2\right]\\
            &\leq\mo\biggl(\frac{\Delta_0+\sigma^2}{(1-\kappa B^2)\sqrt{T}}+\frac{\kappa G^2}{1-\kappa B^2}+\frac{\kappa\sigma^2}{1-\kappa B^2}\biggr).
        \end{align*}
    \end{corollary}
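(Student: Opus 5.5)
The plan is to substitute $\gamma=\gamma_0/\sqrt{T}$ into the bound of Theorem \ref{convergence for nonconvex R-DSGD} and then simplify the resulting constants. First we check that the theorem applies. Since $T\geq 1$, we have $\gamma=\gamma_0/\sqrt{T}\leq \gamma_0\leq \delta/L$, so the stepsize condition holds. The remaining hypotheses ($0<\delta<1/4$ and $\kappa B^2<(1-4\delta)/(1+2\delta)$) are inherited directly from the corollary's assumptions.

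Plugging in, the first term of the theorem becomes $\frac{\Delta_0}{C_1\gamma_0\sqrt{T}}$. The second term becomes $\frac{2\gamma_0 L\sigma^2}{C_1 h\sqrt{T}}$, and since $\gamma_0 L\leq\delta<1/4$ and $h\geq 1$, it is at most $\frac{\sigma^2}{2C_1\sqrt{T}}$. The third term is $C_2(\kappa\sigma^2+\kappa G^2)$, with $C_2=(\tfrac12+\delta)C_1^{-1}\leq \tfrac34 C_1^{-1}$.

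It then remains to relate $C_1$ to $1-\kappa B^2$, which is the only step requiring some care. With $\delta$ a fixed constant, $C_1=\frac12-2\delta-\kappa B^2(\frac12+\delta)=(\frac12+\delta)\bigl(\frac{1-4\delta}{1+2\delta}-\kappa B^2\bigr)$. We must show $C_1^{-1}=\mo\bigl((1-\kappa B^2)^{-1}\bigr)$, where the hidden constant depends only on $\delta$. The difficulty is that $C_1$ is proportional to $c_\delta-\kappa B^2$ with $c_\delta:=\frac{1-4\delta}{1+2\delta}<1$, not to $1-\kappa B^2$. The intended reading, consistent with the paper's statement $C_1=\mo(1-\kappa B^2)$, treats $\delta$ as a small fixed constant so that $c_\delta$ plays the role of $1$ inside $\mo(\cdot)$. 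To make this rigorous we can argue in either of two ways:
\begin{itemize}
\item[(i)] Assume $\kappa B^2\leq c_{\delta}/(1+\epsilon)$ for a constant margin $\epsilon>0$. Then $c_\delta-\kappa B^2\geq \frac{\epsilon}{1+\epsilon}c_\delta\geq \frac{\epsilon c_\delta}{1+\epsilon}(1-\kappa B^2)$, which gives $C_1\gtrsim 1-\kappa B^2$.
\item[(ii)] Interpret the corollary as letting $\delta\to0$ with $\delta$ chosen proportional to $1-\kappa B^2$. Then $c_\delta-\kappa B^2\geq(1-\kappa B^2)-6\delta\gtrsim 1-\kappa B^2$, at the cost of $\gamma_0\leq\delta/L$ contributing a factor $(1-\kappa B^2)^{-1}$ to the $\Delta_0/\sqrt{T}$ term.
\end{itemize}
We adopt the paper's convention of absorbing $\delta$-dependent constants into $\mo$, since $\mo$ is already stated to hide such constants.

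Combining the three pieces, taking $\gamma_0=\Theta(\delta/L)$ so that the factor $1/\gamma_0$ is also absorbed into $\mo$ (consistent with $\mo$ hiding $L$), we obtain
\[
\frac{\Delta_0+\sigma^2}{(1-\kappa B^2)\sqrt{T}}+\frac{\kappa G^2+\kappa\sigma^2}{1-\kappa B^2},
\]
as claimed.
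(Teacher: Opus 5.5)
Your proposal follows the paper's argument. You substitute $\gamma=\gamma_0/\sqrt{T}$ into Theorem~\ref{convergence for nonconvex R-DSGD} and replace $C_1^{-1}$ by $\mo\bigl((1-\kappa B^2)^{-1}\bigr)$. The subtlety you flag is genuine: the paper's only justification is the remark $C_1=\mo(1-\kappa B^2)$, which read literally is an upper bound on $C_1$, whereas the step needs a lower bound, so your treatment is more careful than the paper's.

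One caveat concerns your closing appeal to ``absorbing $\delta$-dependent constants''. That appeal does not by itself close the step, because for fixed $\delta$ the ratio $(1-\kappa B^2)/(c_\delta-\kappa B^2)$ is unbounded as $\kappa B^2\uparrow c_\delta$. It is your margin assumption (i) that makes the step rigorous, and under it $1-\kappa B^2$ is itself bounded below by a constant, so the displayed $(1-\kappa B^2)^{-1}$ factor becomes cosmetic. Option (ii) instead costs the extra factor $(1-\kappa B^2)^{-1}$ on the $\Delta_0$ term that you noted. In either case $1/\gamma_0$ must also be hidden in $\mo$, as the paper tacitly does.
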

    Corollary \ref{coro:r-dsgd:nonconvex} demonstrates that R-DSGD achieves a convergence rate of $\mathcal{O}(1/\sqrt{T})$ for nonconvex smooth objectives. However, the inaccuracy of the $(b,\kappa)$-robust aggregation rule introduces a \emph{non-vanishing Byzantine error} of order:
    \begin{equation}\label{robustness lower bound for nonconvex}
        \mo\left(\frac{\kappa\sigma^2}{1-\kappa B^2} + \frac{\kappa G^2}{1-\kappa B^2}\right).
    \end{equation}
    This term reflects the irreducible error caused by Byzantine tolerance, scaling with the heterogeneity parameters $G$ and $B$, and the noise variance $\sigma^2$.
        
        We next examine the behavior of the algorithm when the global objective satisfies the PL condition. The formal description is provided in Theorem~\ref{convergence for PL R-DSGD specific}.
        \begin{theorem}[\textbf{PL case}]\label{convergence for PL R-DSGD}
        Suppose that Assumptions \ref{smoothness}-\ref{aggregation} hold. Given $T>2$, assume that the stepsizes are chosen as 
        \begin{equation}\label{eq:stepsize}
            \gamma_t=\begin{cases}
            \gamma_0, & \mathrm{if}\;t<\lfloor T/2\rfloor,\\
            \mo(\gamma_0/t), & \mathrm{if}\;t\geq \lfloor T/2\rfloor,
        \end{cases}
        \end{equation}
        where $\gamma_0$ is the initial stepsize.
        Then the last iterate $x^{T-1}$ generated by R-DSGD satisfies
        \begin{align*}
            &\ep\left[f_{\mh}(x^{T-1})\right]-f_{\mh}^*  \leq\mo\biggl(
            \frac{\Delta_0\exp(-\mu(1-\kappa B^2)T)}{T^2}\\&\;+\frac{(1+\kappa)\sigma^2+\kappa G^2}{(1-\kappa B^2)^2T}+\frac{\kappa G^2}{\mu(1-\kappa B^2)}+\frac{\kappa \sigma^2}{\mu(1-\kappa B^2)}\biggr).
        \end{align*}
    \end{theorem}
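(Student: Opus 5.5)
We first reduce everything to a one-step descent inequality with a contraction factor driven by the PL constant, and then unroll it under the two-phase stepsize schedule (\ref{eq:stepsize}).

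\textbf{Step 1: decomposition of the update.} Write $g^t=\bar g^t_{\mh}+e^t$, where $\bar g^t_{\mh}=\frac1h\sum_{i\in\mh}g_i^t$ and $e^t=\ma(g_1^t,\ldots,g_n^t)-\bar g^t_{\mh}$. By Assumption \ref{aggregation}, $\norm{e^t}^2\le\frac{\kappa}{h}\sum_{i\in\mh}\norm{g_i^t-\bar g^t_{\mh}}^2$. Taking conditional expectations and using Assumption \ref{sfo} (independence and bounded variance) together with Assumption \ref{heterogeneity}, we get
\[
\ep_t\norm{e^t}^2\le \kappa\sigma^2+\kappa G^2+\kappa B^2\norm{\nabla f_{\mh}(x^{t-1})}^2 .
\]
This is the same dispersion estimate that must underlie Theorem \ref{convergence for nonconvex R-DSGD}, so we reuse it.

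\textbf{Step 2: one-step descent.} Apply $L$-smoothness at $x^t=x^{t-1}-\gamma_t g^t$. Split the cross term $-\gamma_t\innerproduct{\nabla f_{\mh}}{e^t}$ by Young's inequality as $\le\frac{\gamma_t}{2}\norm{\nabla f_{\mh}}^2+\frac{\gamma_t}{2}\norm{e^t}^2$. Bound the quadratic term by $\norm{g^t}^2\le 2\norm{\bar g^t_{\mh}}^2+2\norm{e^t}^2$, where $\ep_t\norm{\bar g^t_{\mh}}^2\le\norm{\nabla f_{\mh}}^2+\sigma^2/h$. With $\gamma_t\le\delta/L$ this gives exactly the inequality behind Theorem \ref{convergence for nonconvex R-DSGD}:
\[
\ep f_{\mh}(x^t)\le \ep f_{\mh}(x^{t-1})-C_1\gamma_t\ep\norm{\nabla f_{\mh}(x^{t-1})}^2+\gamma_t\Bigl(\tfrac12+\delta\Bigr)\kappa(\sigma^2+G^2)+\tfrac{L\gamma_t^2\sigma^2}{h}\cdot c .
\]
The $\kappa B^2\norm{\nabla f}^2$ part of $\norm{e^t}^2$ is absorbed into $C_1=\mo(1-\kappa B^2)$; this is why $\kappa B^2$ must be small enough. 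Now invoke Assumption \ref{pl} to replace $\norm{\nabla f_{\mh}}^2$ by $2\mu r_{t-1}$, where $r_t=\ep f_{\mh}(x^t)-f^*_{\mh}$. This yields the recursion
\[
r_t\le(1-2\mu C_1\gamma_t)\,r_{t-1}+\gamma_t A+\gamma_t^2 D,
\]
with $A=\mo(\kappa\sigma^2+\kappa G^2)$ and $D=\mo(L(1+\kappa)\sigma^2)$. The $(1+\kappa)$ in $D$ comes from the $2\norm{e^t}^2$ term in $\norm{g^t}^2$; that term also contributes $\kappa G^2$ to $D$, which is harmless.

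\textbf{Step 3: unrolling (the main obstacle).} The bias term $\gamma_t A$ does not vanish. Writing $r_t-\frac{A}{2\mu C_1}=:s_t$, the recursion becomes $s_t\le(1-2\mu C_1\gamma_t)s_{t-1}+\gamma_t^2D$. This is the source of the floor $\frac{\kappa(\sigma^2+G^2)}{\mu(1-\kappa B^2)}$.
\begin{itemize}
\item In the first phase, $t<\lfloor T/2\rfloor$ with constant $\gamma_0$, we get $s_{T/2}\le(1-2\mu C_1\gamma_0)^{T/2}\Delta_0+\frac{\gamma_0D}{2\mu C_1}$, i.e.\ exponential decay $\exp(-\mu(1-\kappa B^2)T)$ in $\Delta_0$.
\item In the second phase take $\gamma_t=\frac{c}{\mu C_1 (t+t_0)}$ with $c\ge 2$. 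Then $\prod_{k}(1-2\mu C_1\gamma_k)\lesssim (T/2)^2/T^2$-type ratios, so the phase-one residue is multiplied by $\mo(1/T^2)$ (giving the $\Delta_0 e^{-\cdots}/T^2$ term and making the constant $\gamma_0D/\mu C_1$ residue lower order). The accumulated noise $\sum_k\gamma_k^2 D\prod_{j>k}(\cdot)$ is $\mo\bigl(\frac{D}{\mu^2C_1^2T}\bigr)=\mo\bigl(\frac{(1+\kappa)\sigma^2+\kappa G^2}{(1-\kappa B^2)^2T}\bigr)$.
\end{itemize}
The delicate points are choosing $t_0$ and $c$ so that $\gamma_t\le\delta/L$ holds throughout and the products telescope cleanly, i.e.\ the standard Stich-type argument for decreasing stepsizes. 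We would also need to handle the possibility that $s_t$ is negative, which only helps.

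Finally, $r_{T-1}=s_{T-1}+\frac{A}{2\mu C_1}$. Recalling $C_1=\mo(1-\kappa B^2)$, this gives the stated bound, with $\mo(\cdot)$ absorbing $L$ and $\delta$.
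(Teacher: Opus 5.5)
Your proof is the paper's. Lemma~\ref{descent lemma for R-DSGD} plus the PL inequality gives $U_{t+1}\le(1-\alpha_1\gamma_t)U_t+\alpha_2\gamma_t^2+\alpha_3\gamma_t$ with $\alpha_1=2\mu C_1$, and this recursion is unrolled by the two-phase Lemma~\ref{lemma for PL}. Your one variation is subtracting $\alpha_3/\alpha_1$ first. It is valid: work with $\max\{s_t,0\}$ in the $(s+t)^2$-telescoping, which is legitimate because $1-\alpha_1\gamma_t\ge 0$. For the same recursion it gives the floor $\alpha_3/\alpha_1$ instead of the paper's $2\alpha_3/\alpha_1$.

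One point to fix concerns the phase-two schedule. The $\mo(1/T^2)$ damping of the phase-one residue is what makes $\gamma_0D/\alpha_1$ lower order. It requires the decreasing schedule to restart at $\gamma_0$ when $t=\lfloor T/2\rfloor$:
$\gamma_t=2/(\alpha_1(s_0+t-\lfloor T/2\rfloor))$, with $\gamma_0=2/(\alpha_1 s_0)$ and $s_0>\max\{2,2L/(\delta\alpha_1)\}$.
With this choice the phase-two product is at most $4(s_0-1)^2/T^2$.

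With a fixed offset $t_0$, the product is only the $\Theta(1)$ ratio $\bigl((T/2+t_0)/(T+t_0)\bigr)^{2c}$. That is exactly your ``$(T/2)^2/T^2$-type'' ratio. A $T$-independent residue of order $\gamma_0D/\alpha_1$ would then survive. It contains an $L\sigma^2/h$ part that the $\kappa$-floor does not absorb, so the claimed $1/T$ term would fail.
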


    Theorem \ref{convergence for PL R-DSGD} demonstrates that under the PL condition, the convergence rate of R-DSGD improves to $\mathcal{O}(1/T)$, surpassing the rate established in Corollary~\ref{coro:r-dsgd:nonconvex}. Nevertheless, similar to the general nonconvex setting, the function value remains subject to an unavoidable error proportional to\begin{equation}\label{robustness lower bound for pl}
        \mo\left(\frac{\kappa G^2}{\mu(1-\kappa B^2)} + \frac{\kappa \sigma^2}{\mu(1-\kappa B^2)}\right).
    \end{equation}
    
    \subsection{Analysis of R-DSGD with Momentum}
 We now turn our attention to R-DSGD with local momentum (R-DSGD-M). Theorem \ref{convergence for R-DSGD with momentum} addresses the nonconvex smooth case, while Theorem \ref{convergence for PL R-DSGD-M} covers the additional PL condition. Detailed proofs are available in Appendix \ref{Proof for Convergence Result of R-DSGD with Momentum}.
    \begin{theorem}[\textbf{Nonconvex case}]\label{convergence for R-DSGD with momentum}
        Suppose that Assumptions \ref{smoothness}, \ref{heterogeneity}-\ref{aggregation} hold, and assume that $\kappa B^2<\mo(1)$.  If the stepsizes and momentum parameters are chosen as $\gamma_t\equiv\gamma \leq 1/(32L)$ and $\beta_t\equiv\beta=1-32\gamma L$, then for any $T>0$, the iterates $\{x^t\}_{t=0}^{T-1}$ generated by R-DSGD-M satisfy
        \begin{align*}
            &\frac1T\sum_{t=0}^{T-1}\ep\left[\norm{\nabla f_{\mh}(x^t)}^2\right]\\
            &\leq \mo\left(\frac{\Delta_0}{(1-\kappa B^2)\gamma T}+\frac{\gamma (1+\kappa) \sigma^2}{1-\kappa B^2}+\frac{\kappa G^2}{1-\kappa B^2}\right).
        \end{align*}
    \end{theorem}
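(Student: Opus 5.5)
We follow the Lyapunov-function analysis of \citet{allouah2023fixing}, now carrying the $(G,B)$ term throughout. Write $\bar m^t=\frac1h\sum_{i\in\mh}m_i^t$ for the average honest momentum and split
\[
g^t=\nabla f_{\mh}(x^{t-1})+\delta^t+\xi^t,\qquad \delta^t=\bar m^t-\nabla f_{\mh}(x^{t-1}),\quad \xi^t=g^t-\bar m^t .
\]
Here $\delta^t$ is the momentum deviation and $\xi^t$ is the aggregation error. Smoothness (Assumption~\ref{smoothness}) together with $\innerproduct{a}{b}\ge -\frac12\norm a^2-\frac12\norm b^2$ gives the descent inequality
\[
f_{\mh}(x^t)\le f_{\mh}(x^{t-1})-\frac{\gamma}{2}\norm{\nabla f_{\mh}(x^{t-1})}^2+\gamma\norm{\delta^t}^2+\gamma\norm{\xi^t}^2+\frac{L\gamma^2}{2}\norm{g^t}^2 .
\]
Assumption~\ref{aggregation} controls the aggregation error: $\norm{\xi^t}^2\le \kappa\Upsilon^t$, where $\Upsilon^t=\frac1h\sum_{i\in\mh}\norm{m_i^t-\bar m^t}^2$. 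We also bound $\norm{g^t}^2\le 3\norm{\nabla f_{\mh}}^2+3\norm{\delta^t}^2+3\kappa\Upsilon^t$.

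\textbf{Drift of the momentum deviation.} Unroll $\delta^t=\beta\delta^{t-1}+\beta(\nabla f_{\mh}(x^{t-2})-\nabla f_{\mh}(x^{t-1}))+(1-\beta)(\bar g^t-\nabla f_{\mh}(x^{t-1}))$. The last term has conditional mean zero and variance at most $\sigma^2/h$ by Assumption~\ref{sfo} and independence across workers. The middle term has norm at most $L\gamma\norm{g^{t-1}}$. Using Young's inequality with weight $(1-\beta)$, this yields
\[
\ep\norm{\delta^t}^2\le \beta\,\ep\norm{\delta^{t-1}}^2+\frac{2L^2\gamma^2}{1-\beta}\,\ep\norm{g^{t-1}}^2+(1-\beta)^2\frac{\sigma^2}{h}.
\]

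\textbf{Drift of the honest dispersion.} The same unrolling for $m_i^t-\bar m^t$ gives
\[
\ep\Upsilon^t\le\beta\,\ep\Upsilon^{t-1}+(1-\beta)\,\frac1h\sum_{i\in\mh}\norm{\nabla f_i(x^{t-1})-\nabla f_{\mh}(x^{t-1})}^2+(1-\beta)^2\sigma^2 .
\]
Assumption~\ref{heterogeneity} bounds the middle sum by $G^2+B^2\norm{\nabla f_{\mh}(x^{t-1})}^2$. Because the noise enters with a factor $(1-\beta)^2$, this is the point where momentum suppresses the $\kappa\sigma^2$ floor.

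\textbf{Lyapunov function.} Take $V^t=f_{\mh}(x^t)+\frac{a\gamma}{1-\beta}\norm{\delta^{t+1}}^2+\frac{c\kappa\gamma}{1-\beta}\Upsilon^{t+1}$ with constants $a,c=\mo(1)$. Summing the three inequalities, the coefficients are set so that the $\norm{\delta}^2$ and $\Upsilon$ terms telescope. With $1-\beta=32\gamma L$, the cross terms $\frac{L^2\gamma^2}{1-\beta}\norm{g}^2$ and $L\gamma^2\norm g^2$ are of order $\gamma^2L\norm{\nabla f_{\mh}}^2$ plus absorbable multiples of $\norm\delta^2$ and $\kappa\Upsilon$. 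Each step then gives
\[
\ep V^t\le \ep V^{t-1}-\gamma\Bigl(\tfrac12-c'\kappa B^2-\mo(\gamma L)\Bigr)\ep\norm{\nabla f_{\mh}(x^{t-1})}^2+\mo\bigl(\gamma\kappa G^2+\gamma^2L(1+\kappa)\sigma^2\bigr).
\]
The $\sigma^2$ terms come out as $\frac{\gamma}{1-\beta}(1-\beta)^2\sigma^2(1/h+\kappa)=\mo(\gamma^2L(1+\kappa)\sigma^2)$.

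\textbf{Main obstacle.} The hard step is the $B^2$ term. The dispersion recursion converts $\norm{\nabla f_{\mh}}^2$ into a positive contribution $c\kappa B^2\gamma\norm{\nabla f_{\mh}}^2$, and this must be beaten by the descent coefficient $\gamma/2$. The weights $a,c$ must therefore be chosen as absolute constants, large enough to absorb the terms they telescope but small enough that $c\kappa B^2<\frac12$. The assumption $\kappa B^2<\mo(1)$ is what allows this, and it leaves a coefficient of order $1-\kappa B^2$.

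\textbf{Conclusion.} The initial terms vanish because $m_i^0=0$, up to harmless $\mo(\norm{\nabla f_{\mh}(x^0)}^2)$ first-step terms that are absorbed into $\Delta_0/\gamma$ by smoothness. Telescoping over $T$ steps, dividing by $\gamma T(1-\kappa B^2)$, and using $V^{T}\ge f_{\mh}^*$ gives the stated bound.
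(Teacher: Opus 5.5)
Your argument is in substance the paper's. You use a Lyapunov function built from $f_{\mh}$, the momentum deviation $\norm{\delta^t}^2$ and the honest momentum dispersion (the paper's $\Gamma_{\mh}^t$). Your weights $\frac{a\gamma}{1-\beta}=\frac{a}{32L}$ and $\frac{c\kappa\gamma}{1-\beta}=\frac{c\kappa}{32L}$ play the roles of the paper's $c_1=\frac{1}{8L}$ and $c_2=\frac{\kappa}{2L}$. Your dispersion recursion is exactly the paper's lemma for $\Gamma_{\mh}^t$. Your Young split in the $\delta$-recursion gives a cleaner pure $\beta$-contraction than the lemma the paper borrows from Allouah et al., but that is cosmetic here. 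Two steps are misstated, though both can be repaired.

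\emph{Index alignment.} Your $V^t$ contains $\Upsilon^{t+1}$ alongside $\delta^{t+1}$, so $V^t-V^{t-1}$ contains $\Upsilon^{t+1}-\Upsilon^t$. Your dispersion recursion turns this into $c\kappa\gamma B^2\norm{\nabla f_{\mh}(x^{t})}^2$, not a multiple of $\norm{\nabla f_{\mh}(x^{t-1})}^2$. Hence the per-step inequality you display does not follow from your $V^t$. With a constant stepsize this is only a boundary effect. After telescoping, the two gradient sums differ by one index and leave an extra term
\[
c\kappa\gamma B^2\norm{\nabla f_{\mh}(x^T)}^2\le 2c\kappa\gamma L B^2\,\bigl(f_{\mh}(x^T)-f_{\mh}^*\bigr),
\]
which is absorbed by the $f_{\mh}(x^T)-f_{\mh}^*$ part of $V^T$ since $\gamma L\le 1/32$. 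The paper avoids the issue by placing $\Gamma_{\mh}^{t-1}$ (one index behind $\delta^t$) in $V^t$. It then substitutes the dispersion recursion directly into $\norm{\zeta^t}^2\le\kappa\Gamma_{\mh}^t$, so every gradient term sits at $x^{t-1}$. That genuine one-step recursion is what it later needs for the PL theorem with time-varying stepsizes.

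\emph{Size of the $\delta$ cross term.} Multiplied by its weight $\frac{a\gamma}{1-\beta}$, the term $\frac{2L^2\gamma^2}{1-\beta}\norm{g^t}^2$ equals
\[
\frac{2aL^2\gamma^3}{(1-\beta)^2}\norm{g^t}^2=\frac{a\gamma}{512}\norm{g^t}^2 .
\]
This is a fixed fraction of $\gamma$, not $\mo(\gamma^2L)$, so the $\mo(\gamma L)$ in your displayed coefficient is really an absolute constant. It is small only because of the factor $32$ in $1-\beta=32\gamma L$. With $1-\beta=\gamma L$ it would be $2a\gamma\norm{g^t}^2$ and would destroy the $-\gamma/2$ descent, since $a$ must exceed $1$ to absorb $\gamma\norm{\delta^t}^2$. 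With, e.g., $a=c=2$, the coefficients of $\norm{\delta^t}^2$ and $\kappa\Upsilon^t$ are negative. After the index shift, the gradient coefficient is at most $-\gamma(0.44-2\kappa B^2)$. The stated bound therefore follows for $\kappa B^2$ below an absolute constant.
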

    \begin{corollary}\label{coro:r-dsgd-m:convergence}
        Under the conditions of Theorem \ref{convergence for R-DSGD with momentum},  for any $T\geq 1$, choosing $\gamma=\gamma_0/\sqrt{T}$ with $\gamma_0\leq 1/(32L)$ yields
        \begin{align*}
            &\frac1T\!\sum_{t=0}^{T-1}\!\ep\!\left[\norm{\nabla f_{\mh}(x^t)}^2\right]\!\leq\!\mo\!\left(\!\frac{\Delta_0\!+\!(1+\kappa)\sigma^2}{(1-\kappa B^2)\sqrt{T}}\!+\!\frac{\kappa G^2}{1-\kappa B^2}\!\right).
        \end{align*}
    \end{corollary}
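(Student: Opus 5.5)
The corollary follows by substituting $\gamma=\gamma_0/\sqrt{T}$ into the bound of Theorem~\ref{convergence for R-DSGD with momentum}. First I check that this choice is admissible. Since $T\geq 1$, we have $\gamma=\gamma_0/\sqrt{T}\leq\gamma_0\leq 1/(32L)$, so the stepsize condition holds. The momentum parameter $\beta=1-32\gamma L=1-32\gamma_0L/\sqrt{T}$ then lies in $[0,1)$, as the theorem requires. The assumption $\kappa B^2<\mo(1)$ is inherited unchanged. Hence the guarantee
\[
\frac1T\sum_{t=0}^{T-1}\ep\left[\norm{\nabla f_{\mh}(x^t)}^2\right]\leq \mo\left(\frac{\Delta_0}{(1-\kappa B^2)\gamma T}+\frac{\gamma(1+\kappa)\sigma^2}{1-\kappa B^2}+\frac{\kappa G^2}{1-\kappa B^2}\right)
\]
applies verbatim.

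Next I evaluate each term. The optimization term becomes
\[
\frac{\Delta_0}{(1-\kappa B^2)\gamma T}=\frac{\Delta_0}{(1-\kappa B^2)\gamma_0\sqrt{T}}.
\]
The noise term becomes
\[
\frac{\gamma_0(1+\kappa)\sigma^2}{(1-\kappa B^2)\sqrt{T}}.
\]
Both carry the factor $1/\sqrt{T}$. The factors $1/\gamma_0$ and $\gamma_0$ depend only on $L$ once $\gamma_0$ is fixed, for example $\gamma_0=1/(32L)$. Following the paper's convention that $\mo(\cdot)$ hides dependence on $L$, these factors can be absorbed. Summing the two terms gives $\mo\bigl((\Delta_0+(1+\kappa)\sigma^2)/((1-\kappa B^2)\sqrt{T})\bigr)$.

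The heterogeneity term $\kappa G^2/(1-\kappa B^2)$ does not depend on $\gamma$, so it passes through unchanged. This yields the stated bound.

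There is no real obstacle here. The only point that needs care is the constant bookkeeping: $\gamma_0$ must be treated as an $L$-dependent constant rather than a free parameter, or else a factor $\max\{\gamma_0,1/\gamma_0\}$ must be kept explicit. Everything substantive is already contained in Theorem~\ref{convergence for R-DSGD with momentum}.
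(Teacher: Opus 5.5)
Your proposal is correct and follows the same route the paper intends. The corollary is an immediate substitution of $\gamma=\gamma_0/\sqrt{T}$ into the bound of Theorem~\ref{convergence for R-DSGD with momentum}, after checking $\gamma\le\gamma_0\le 1/(32L)$, with the factors $\gamma_0$ and $1/\gamma_0$ absorbed into $\mo(\cdot)$ by treating $\gamma_0$ as an $L$-dependent constant of order $1/L$ (not arbitrarily small), exactly as you note.
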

    \begin{theorem}[\textbf{PL case}]\label{convergence for PL R-DSGD-M}
        Suppose that Assumptions~\ref{smoothness}-\ref{aggregation} hold, and assume that $\kappa B^2<\mo(1)$. Given any $T>2$, assume that the stepsizes are chosen as in ~\eqref{eq:stepsize} and the momentum parameters are chosen as $\beta_t=1-32\gamma_tL$. Then for any $T>2$, the last iterate $x^{T-1}$ generated by R-DSGD-M satisfies
        \begin{align*}
           \ep\left[f_{\mh}(x^{T-1})\right]\!-\!f_{\mh}^*  &\leq\mo\biggl(\frac{\Delta_0\exp(-\mu(1-\kappa B^2)T)}{T^2}\\&\quad+\!\frac{(1+\kappa)\sigma^2}{(1-\kappa B^2)^2T}\!+\!\frac{\kappa G^2}{\mu(1-\kappa B^2)}\biggr).
        \end{align*}
    \end{theorem}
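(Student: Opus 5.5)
We prove Theorem~\ref{convergence for PL R-DSGD-M} with a Lyapunov argument. We combine the function gap $\delta_t:=\ep[f_{\mh}(x^t)]-f_{\mh}^*$ with two momentum-deviation terms. The first is the drift of the honest average momentum $\bar m^t:=\frac1h\sum_{i\in\mh}m_i^t$ from the true gradient, $U_t:=\ep\norm{\bar m^{t}-\nabla f_{\mh}(x^{t-1})}^2$. The second is the honest momentum dispersion $V_t:=\ep\frac1h\sum_{i\in\mh}\norm{m_i^t-\bar m^t}^2$.

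\textbf{Descent inequality.} Write $g^t=\bar m^t+(g^t-\bar m^t)$. By Assumption~\ref{aggregation}, $\norm{g^t-\bar m^t}^2\le\kappa V_t$. $L$-smoothness with $\gamma_t\le 1/(32L)$ then gives, via Young's inequality,
\begin{equation*}
\delta_t\le\delta_{t-1}-\tfrac{\gamma_t}{2}\ep\norm{\nabla f_{\mh}(x^{t-1})}^2+\gamma_t U_t+\gamma_t\kappa V_t-c\gamma_t\ep\norm{g^t}^2 .
\end{equation*}

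\textbf{Recursion for $U_t$.} Use $\bar m^t=\beta_t\bar m^{t-1}+(1-\beta_t)\bar g^t$ and unbiasedness. Workers are independent, so the noise enters as $(1-\beta_t)^2\sigma^2/h$. Smoothness gives $\norm{\nabla f_{\mh}(x^{t-1})-\nabla f_{\mh}(x^{t-2})}\le L\gamma_{t-1}\norm{g^{t-1}}$. Hence
\begin{equation*}
U_t\le\beta_t U_{t-1}+\tfrac{c'}{1-\beta_t}L^2\gamma_{t-1}^2\ep\norm{g^{t-1}}^2+(1-\beta_t)^2\sigma^2/h .
\end{equation*}

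\textbf{Recursion for $V_t$.} Here we use the local momentum structure:
\begin{equation*}
V_t\le\beta_t V_{t-1}+(1-\beta_t)\Bigl(G^2+B^2\ep\norm{\nabla f_{\mh}(x^{t-1})}^2\Bigr)+(1-\beta_t)^2\sigma^2 .
\end{equation*}
This step uses Jensen's inequality on the convex combination and Assumption~\ref{heterogeneity}. The crucial feature is that the raw noise enters only with the factor $(1-\beta_t)^2$, not $(1-\beta_t)$. This is exactly why the $\kappa\sigma^2/\mu$ floor of Theorem~\ref{convergence for PL R-DSGD} disappears.

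\textbf{Combining and applying PL.} Set $1-\beta_t=32\gamma_tL$ and form
\begin{equation*}
\Phi_t=\delta_t+\tfrac{a\gamma_t}{1-\beta_t}U_t+\tfrac{a'\kappa\gamma_t}{1-\beta_t}V_t ,
\end{equation*}
where $\gamma_t/(1-\beta_t)=1/(32L)$ is constant. With suitable absolute constants $a,a'$, the $\ep\norm{g^t}^2$ terms cancel. The $V$ recursion contributes $\kappa B^2\gamma_t\norm{\nabla f_{\mh}}^2$, which is absorbed into the negative descent term; this leaves a coefficient $\propto\gamma_t(1-\kappa B^2)$ and needs $\kappa B^2<\mo(1)$. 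Applying Assumption~\ref{pl} then yields
\begin{equation*}
\Phi_t\le\bigl(1-c\mu(1-\kappa B^2)\gamma_t\bigr)\Phi_{t-1}+c_1\gamma_t\kappa G^2+c_2\gamma_t^2L(1+\kappa)\sigma^2 .
\end{equation*}
The $U$ and $V$ parts also contract at rate $\beta_t$, which is at least as fast provided $\mu\lesssim L$.

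\textbf{Unrolling with the stepsize schedule \eqref{eq:stepsize}.} This is the main obstacle, mostly bookkeeping. In the first phase, the constant $\gamma_0$ gives geometric decay of $\Phi_0$ (with $U_0,V_0$ bounded via $m^0_i=0$ and heterogeneity), plus the floor $\kappa G^2/(\mu(1-\kappa B^2))$ and an $\mo(\gamma_0\sigma^2)$ term. In the second phase, take $\gamma_t=\frac{2}{c\mu(1-\kappa B^2)(t+t_0)}$, in the style of Stich's analysis. The standard weighted-sum argument with weights $(t+t_0)^2$ turns the $\gamma_t^2$ noise term into $\frac{(1+\kappa)\sigma^2}{\mu^2(1-\kappa B^2)^2T}$. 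It also preserves the $\gamma_t\kappa G^2$ floor as $\kappa G^2/(\mu(1-\kappa B^2))$, and yields the initial term $\Delta_0\exp(-\mu(1-\kappa B^2)T)/T^2$.

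The delicate points are two. First, the $\beta_t$ change with $t$, so the coefficient $\gamma_t/(1-\beta_t)$ must stay constant; our choice $\beta_t=1-32\gamma_tL$ ensures this. Second, the decay rate of the Lyapunov components must be matched across the phase switch. Finally, use $\delta_{T-1}\le\Phi_{T-1}$.
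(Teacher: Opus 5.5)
Your architecture is the paper's. It uses the same three-part Lyapunov function: the optimality gap (your $\delta_t$), the deviation $U_t$ (the paper's $\ep\norm{\delta^t}^2$), and the dispersion $V_t$ (the paper's $\ep[\Gamma_{\mh}^t]$). The weights are constant, of order $1/L$ and $\kappa/L$. It then applies a PL contraction using $\mu\le L$, followed by the two-phase Stich-type unrolling of Lemma~\ref{lemma for PL}. You keep $-c\gamma_t\ep\norm{g^t}^2$ from the descent step and use the split $\beta_t\norm{a}^2+\frac{\beta_t^2}{1-\beta_t}\norm{b}^2$ in the $U$-recursion. The paper instead expands $\norm{g^t}^2\le 2\norm{\zeta^t}^2+4\norm{\nabla f_{\mh}(x^{t-1})}^2+4\norm{\delta^t}^2$; your variant is harmless. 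What goes wrong is the index bookkeeping, which the paper singles out as the real technical point.

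With $\Phi_t=\delta_t+\frac{a}{32L}U_t+\frac{a'\kappa}{32L}V_t$, the claimed cancellation does not occur in a one-step recursion. The descent step from $x^{t-1}$ to $x^t$ produces $-c\gamma_t\ep\norm{g^t}^2$. The recursion for $U_t$ produces a positive multiple of $\frac{\gamma_{t-1}^2}{\gamma_t}\ep\norm{g^{t-1}}^2$, which sits one index earlier. You must pair $\delta_t$ with $U_{t+1}$ and $V_t$, as in the paper's $V^{t+1}=2\ep[f_{\mh}(x^t)-f_{\mh}^*]+c_1\ep\norm{\delta^{t+1}}^2+c_2\ep[\Gamma_{\mh}^t]$. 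After this shift, the $U_{t+1}$-recursion carries $\beta_{t+1}=1-32\gamma_{t+1}L$. The coefficient of $U_t$ becomes $\frac{a}{32L}-a\gamma_{t+1}+\gamma_t$, and that of $\ep\norm{g^t}^2$ becomes $\gamma_t\bigl(\frac{ac'}{1024}\frac{\gamma_t}{\gamma_{t+1}}-c\bigr)$. Both the contraction and the cancellation therefore need $\gamma_{t+1}\ge c_0\gamma_t$ for an absolute constant $c_0$. The paper assumes nonincreasing stepsizes with $\gamma_{t+1}\ge\frac23\gamma_t$. The schedule \eqref{eq:stepsize} satisfies this because its second phase starts exactly at $\gamma_0=2/(\alpha_4 s_0)$. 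Keeping $\gamma_t/(1-\beta_t)$ constant, which you name as the fix, does not resolve this $\beta_{t+1}$-versus-$\gamma_t$ mismatch. Your remark about matching across the phase switch is where this condition should have appeared.

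Finally, in the dispersion recursion, plain Jensen on $\beta_t(m_i^{t-1}-\bar m^{t-1})+(1-\beta_t)(g_i^t-\bar g^t)$ only gives $(1-\beta_t)\sigma^2$. That would reinstate the $\kappa\sigma^2/\mu$ floor you claim to remove. You must first take $\ep_t$, so that the cross term involves only $\nabla f_i(x^{t-1})-\nabla f_{\mh}(x^{t-1})$. Then apply Young's inequality to that noise-free part, as in Lemma~\ref{lem:moment}.
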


    \textbf{Comparison with R-DSGD.} Theorems \ref{convergence for R-DSGD with momentum} and \ref{convergence for PL R-DSGD-M} indicate that while the local momentum does not improve the asymptotic convergence rates ($\mathcal{O}(1/\sqrt{T})$ and $\mathcal{O}(1/T)$ respectively), it significantly mitigates the Byzantine error. Specifically, the error floors reduce to 
    \begin{equation}\label{robustness lower bound for nonconvex for momentum}
        \mo\left(\frac{\kappa B^2}{1-\kappa G^2}\right)\; \quad \text{and}\;  \quad \mo\left(\frac{\kappa B^2}{\mu(1-\kappa G^2)}\right),
    \end{equation}
    for general nonconvex and PL objectives, respectively. Note that, unlike the bounds for R-DSGD in \eqref{robustness lower bound for nonconvex} and \eqref{robustness lower bound for pl}, these terms \textit{do not depend on the stochastic noise variance} $\sigma^2$. This suggests that local momentum can effectively dampen the impact of stochastic noise on the solution's accuracy.
    \subsection{Discussions and General Remarks}
    \textbf{The most general analysis.} In the special case where $B=0$, our analysis of R-DSGD in Theorem \ref{convergence for nonconvex R-DSGD} and R-DSGD-M in Theorem \ref{convergence for R-DSGD with momentum} recover the results of \citet{allouah2023fixing} and \citet{guerraoui2024robust}. In addition, when $\sigma^2=0$, using a constant stepsize $\gamma=\mo(1/T)$ in Theorem \ref{convergence for nonconvex R-DSGD} yields an improved $\mo(1/T)$ rate, matching the deterministic result of \citet{gupta2025reconciling}.
    
  However, our results constitute a nontrivial extension of these works. Since both data heterogeneity and stochastic noise contribute to discrepancies among honest updates, decoupling their effects is crucial—particularly when analyzing local momentum. While prior studies \cite{allouah2023fixing, guerraoui2024robust} address this under the simpler $G$-bounded assumption (where heterogeneity is constant), we tackle the more complex $(G,B)$-bounded dissimilarity setting. In this setting, heterogeneity scales with $\norm{\nabla f_{\mh}(x)}^2$, intrinsically coupling it with the optimization trajectory. This dependency is further complicated by the iteration-dependent stepsizes in the PL case. These make the analysis more challenging and prevent the direct application of existing frameworks tailored to constant heterogeneity.
    
    \textbf{No-attack accuracy.}
    In the absence of Byzantine attacks,  \citet{yang2025tension} have characterized the behavior of an aggregation rule  via $\hat{\kappa}$-accuracy:
    \[\norm{\ma(x_1,x_2,\ldots,x_n)-\bar{x}}^2\leq\frac{\hat{\kappa}}{n}\sum_{i=1}^n\norm{x_i-\bar{x}}^2,\]
    where $\bar{x}=\frac{1}{n}\sum_{i=1}^nx_i$. They show that any $(b,\kappa)$-robust aggregation rule is $\kappa$-accurate with $\kappa\geq\frac{b}{n-b}$. This highlights an inherent trade-off: robust aggregation rules introduce a bias compared to simple averaging--even when all workers are honest--as the necessary cost of their defensive capability. 

Our work extends this analysis in several key aspects. While  \citet{yang2025tension} have focused on deterministic gradients (R-DGD) with the $(G,0)$-bounded dissimilarity, we address the more challenging stochastic setting (R-DSGD and R-DSGD-M) under the general $(G,B)$-bounded dissimilarity. Our unified bounds are comprehensive, seamlessly recovering no-attack results by simply substituting the robustness parameter $\kappa$ with the accuracy parameter $\hat{\kappa}$.

    \textbf{The condition $\kappa B^2<\mo(1)$.} The requirement that $\kappa B^2$ remains bounded by a constant (also appearing in the work of \citet{allouah2023robust} for R-DGD) admits an intuitive interpretation. 
    If the data is highly heterogeneous (i.e., large $B^2$), the algorithm requires a more accurate aggregation rule (i.e., smaller $\kappa$) to maintain convergence. Conversely, if the aggregation rule is inaccurate (i.e., large $\kappa$), the algorithm can only tolerate limited heterogeneity.

    \section{Lower Bounds for Byzantine Errors}
    \begin{figure*}[!t]
        \centering
        \begin{subfigure}[b]{0.33\textwidth}
        \centering
        \includegraphics[width=\textwidth]{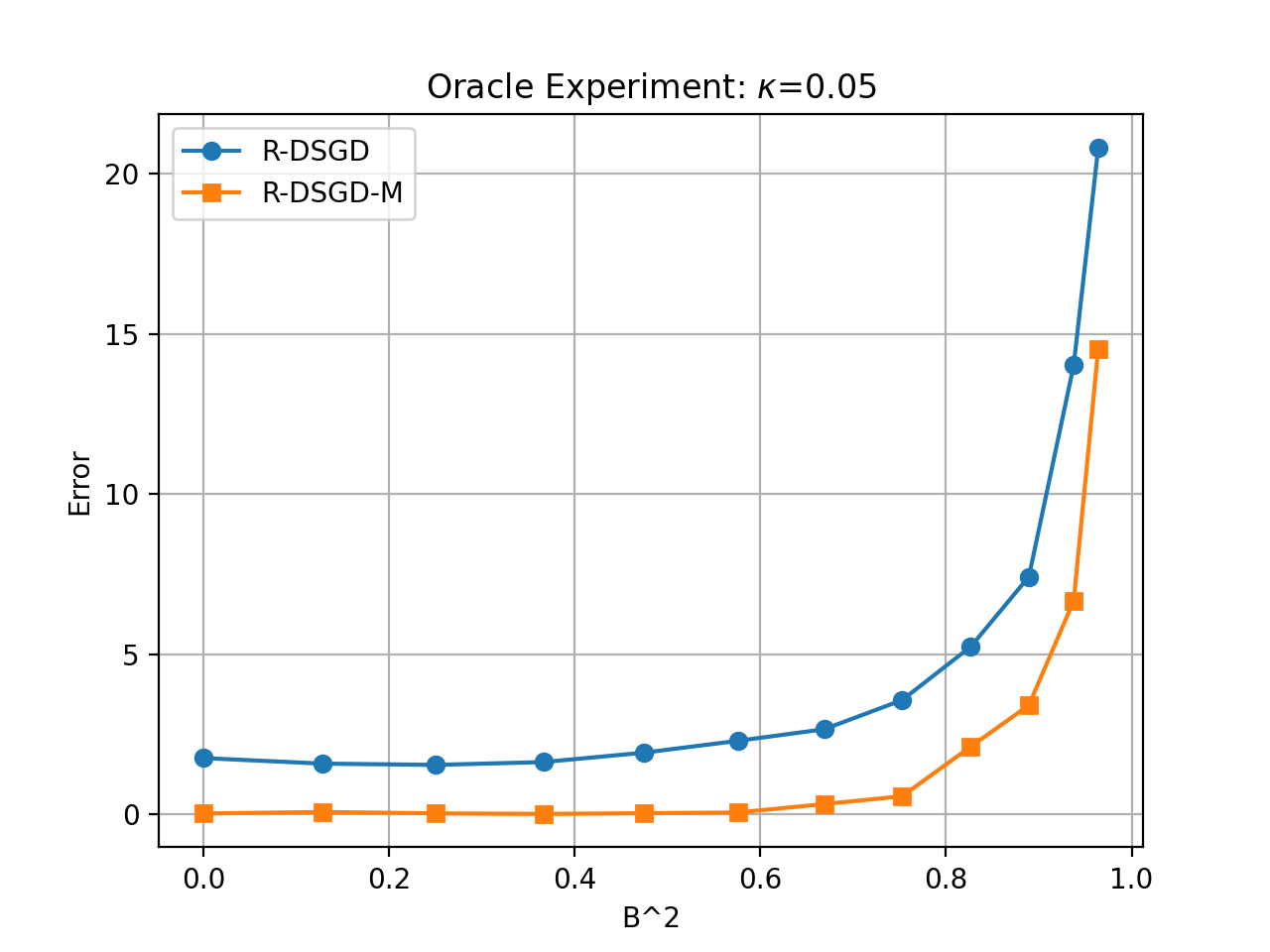}
    \end{subfigure}
    \hfill %
    \begin{subfigure}[b]{0.33\textwidth}
        \centering
        \includegraphics[width=\textwidth]{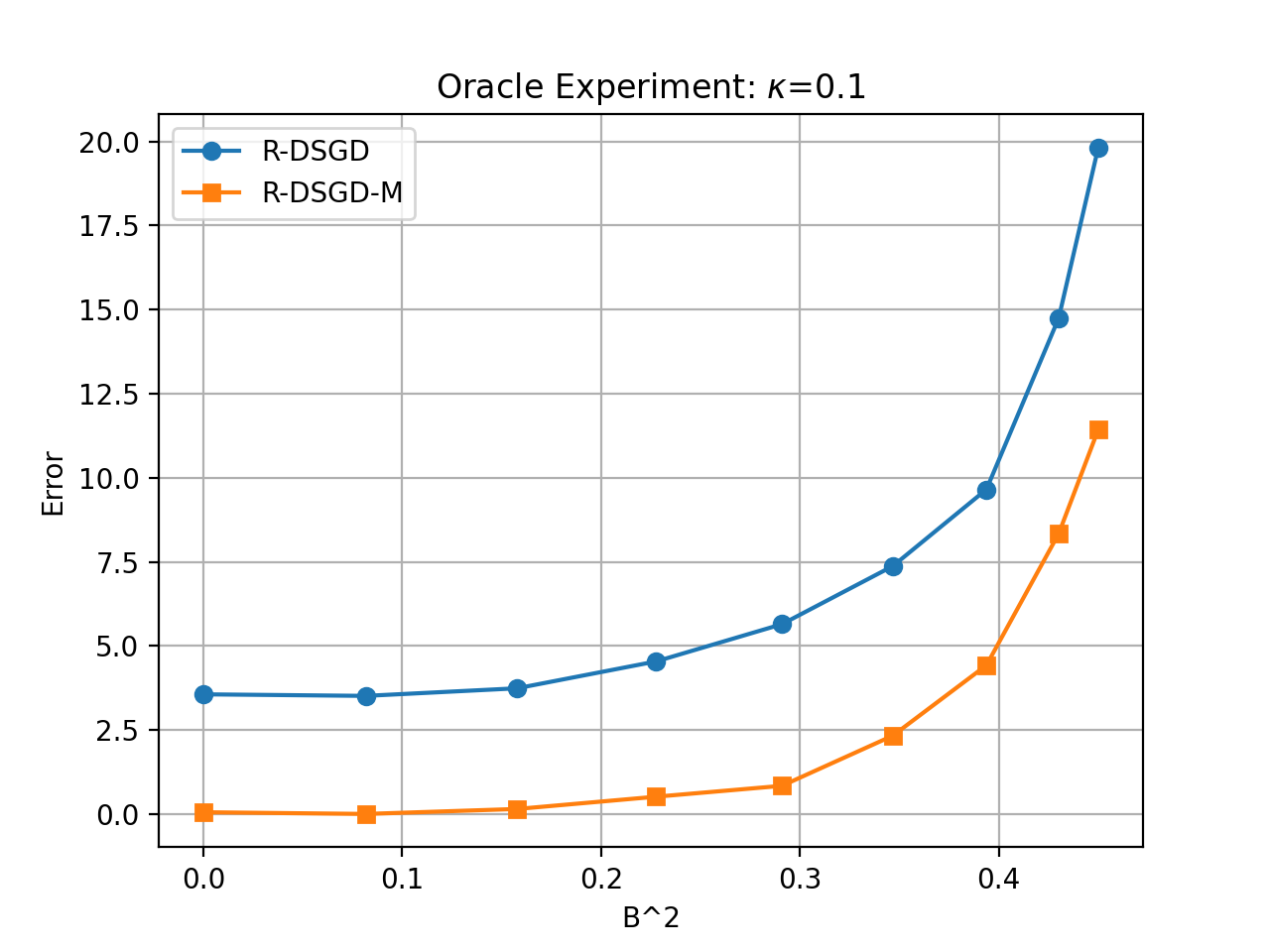}
    \end{subfigure}
    \hfill %
    \begin{subfigure}[b]{0.33\textwidth}
        \centering
        \includegraphics[width=\textwidth]{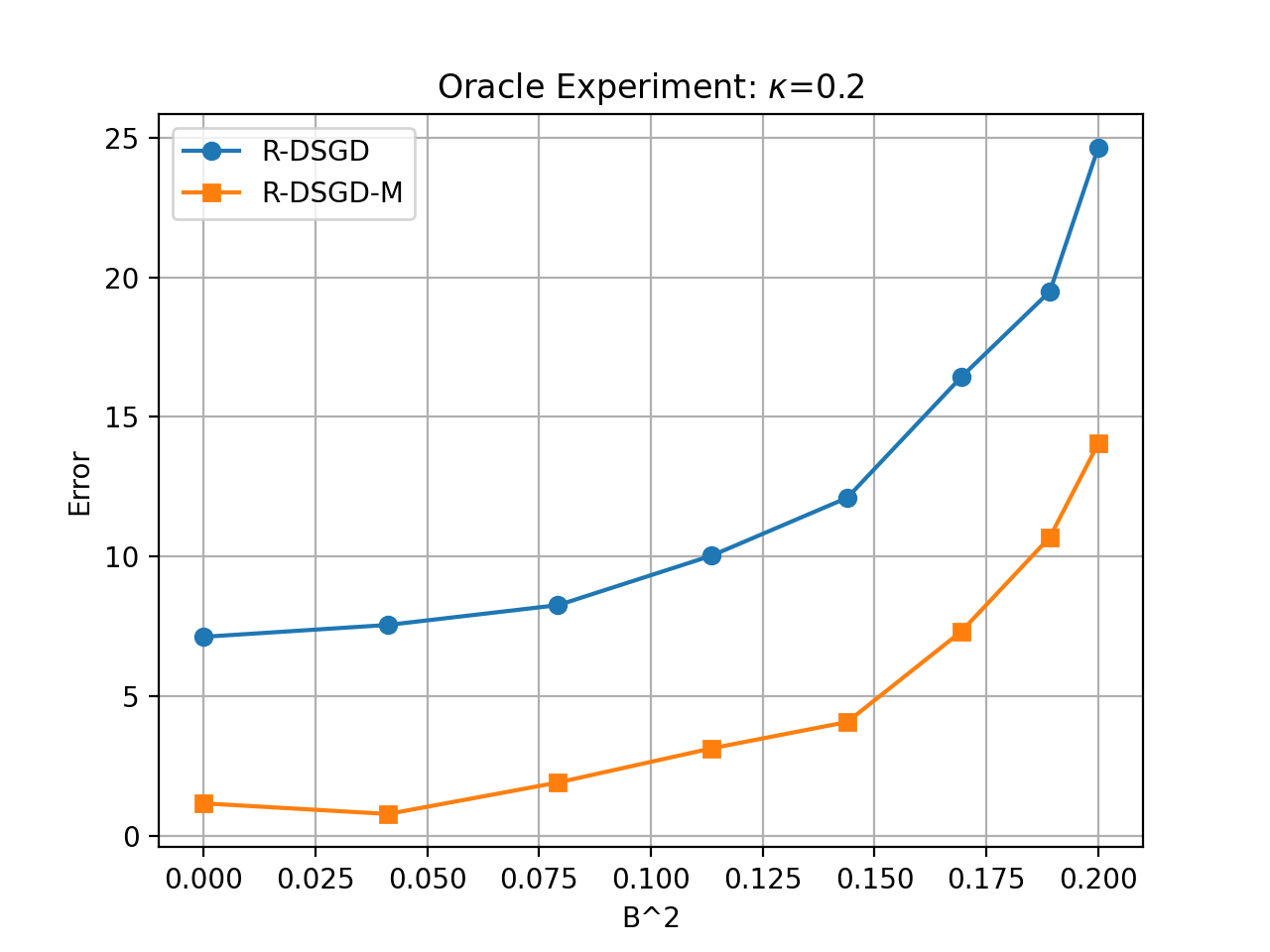}
    \end{subfigure}
    \caption{The performance of the oracle aggregation rules when increasing $B^2$ under three choices of the parameter $\kappa$: $\kappa=0.05$ (left), $\kappa=0.1$ (middle), and $\kappa=0.2$ (right).}
    \label{synthetic experiment}
    \end{figure*}
    In this section, we establish that the Byzantine error upper bounds derived in the previous section for R-DSGD (\ref{robustness lower bound for nonconvex}) and R-DSGD-M (\ref{robustness lower bound for nonconvex for momentum}) are tight. The key strategy is to construct an adversarial aggregation rule that introduces a controlled error—bounded by the $(b,\kappa)$-robustness limit—at every iteration. As the iterations progress, this error accumulates to induce a systematic drift, effectively forcing the algorithm to optimize a skewed objective and resulting in a non-vanishing Byzantine error. Detailed proofs are available in Appendix~\ref{sec: Proof of the Lower Bounds}.

\subsection{The Impact of Data Heterogeneity}
    We first address the tightness of the Byzantine error bound $\mo\left(\frac{\kappa G^2}{1-\kappa B^2}\right)$ arising from data heterogeneity. Since deterministic gradients represent a special case of stochastic gradients (where the variance is zero), it suffices to analyze the lower bound for the deterministic R-DGD-(M) algorithm.
    \begin{theorem}\label{thm for lower bound for heterogeneity}
        Given constants $G,B,\kappa\geq 0$ such that $\kappa B^2<1$, there exists an optimization problem where (i) the objective function is smooth and satisfies the PL condition with a common parameter $\mu>0$, and (ii) the honest local objective functions satisfy the $(G,B)$-bounded dissimilarity, along with a specific $(b,\kappa)$-aggregation rule, such that: if the number of the iteration $T$ is sufficiently large, the last iterate $x^{T-1}$ generated by R-DGD-(M) with any choice of stepsizes and momentum parameters considered in Section \ref{sec:alg:convergence} satisfies
        \begin{align*}
            \norm{\nabla f_{\mh}(x^{T-1})}^2&\geq\mo\left(\frac{\kappa G^2}{1-\kappa B^2}\right),\\
            f_{\mh}(x^{T-1})-f_{\mh}(x^*)&\geq\mo\left(\frac{\kappa G^2}{\mu(1-\kappa B^2)}\right).
        \end{align*}
    \end{theorem}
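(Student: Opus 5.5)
We construct an explicit one-dimensional quadratic instance together with an adversarial aggregation rule that shifts the honest mean by the maximum amount allowed by Assumption~\ref{aggregation}, always in the same direction. Take $d=1$ and $f_{\mh}(x)=\frac{\mu}{2}x^2$ with $L=\mu$, so $f_{\mh}$ is smooth and PL with a common parameter and $x^*=0$.

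\textbf{Honest local functions.} Split the honest set $\mh$ into two halves (for simplicity take $h$ even). Define
\[
f_i(x)=\frac{\mu}{2}x^2\pm\Bigl(\frac{a}{2}x^2+cx\Bigr).
\]
Then $\nabla f_i(x)-\nabla f_{\mh}(x)=\pm(ax+c)$ and
\[
\frac1h\sum_{i\in\mh}\norm{\nabla f_i(x)-\nabla f_{\mh}(x)}^2=(ax+c)^2 .
\]
To certify $(G,B)$-bounded dissimilarity, we need $(ax+c)^2\le G^2+B^2\mu^2x^2$ for all $x$. This is easy with $a=0$ and $c=G$, but then $B$ plays no role. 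The dependence on $1/(1-\kappa B^2)$ must come from the $B$ part.

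\textbf{Adversarial rule and dynamics.} We choose the rule $\ma(x_1,\dots,x_n)=\bar x_{\mh}-\sqrt{\kappa\,V}$, where $V=\frac1h\sum_{i\in\mh}\norm{x_i-\bar x_{\mh}}^2$ is the honest dispersion. This is $(b,\kappa)$-robust by definition. It can be formally defined on all inputs by fixing $\mh$, since the lower bound only needs one admissible rule; if robustness for every subset is required, we instead use a sign-consistent variant.

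\emph{Deterministic R-DGD.} We have $m_i^t=\nabla f_i(x^{t-1})$ and $V=(ax+c)^2$. Choose $a=B\mu$ and $c=G$, and restrict attention to $x\ge0$. Then $\abs{ax+c}\le G+B\mu x$, and the dissimilarity bound holds up to a factor of $2$ (rescale $a,c$ by $1/\sqrt2$; this only changes constants). The iteration becomes
\[
x^{t}=x^{t-1}-\gamma_t\bigl(\mu x^{t-1}-\sqrt\kappa(B\mu x^{t-1}+G)\bigr)
=x^{t-1}-\gamma_t\mu(1-\sqrt\kappa B)\Bigl(x^{t-1}-\frac{\sqrt\kappa G}{\mu(1-\sqrt\kappa B)}\Bigr).
\]
So $x^t$ contracts toward the biased fixed point $\bar x=\frac{\sqrt\kappa G}{\mu(1-\sqrt\kappa B)}$, and it stays in $x\ge0$ if we start at $x^0=\bar x$. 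Starting exactly at the fixed point gives $x^t\equiv\bar x$ for every stepsize schedule, which avoids any "$T$ large" argument. With the schedules of Section~\ref{sec:alg:convergence}, the contraction also gives convergence to $\bar x$ from any start. This yields $\norm{\nabla f_{\mh}}^2=\mu^2\bar x^2=\frac{\kappa G^2}{(1-\sqrt\kappa B)^2}$.

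\textbf{Matching the stated form.} Since $1-\kappa B^2=(1-\sqrt\kappa B)(1+\sqrt\kappa B)$ and $1+\sqrt\kappa B\le2$, we have
\[
\frac{\kappa G^2}{(1-\sqrt\kappa B)^2}\ \ge\ \frac{\kappa G^2}{1-\kappa B^2}.
\]
This gives the gradient bound, and $f_{\mh}(\bar x)-f_{\mh}^*=\frac{1}{2\mu}\norm{\nabla f_{\mh}(\bar x)}^2$ gives the function-value bound.

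\emph{Momentum.} For R-DGD-M, each $m_i^t$ is a convex combination of past local gradients. At the fixed point $x^t\equiv\bar x$ with $m_i^0$ initialized as the local gradients, all momenta equal the local gradients and the same computation applies. With $m_i^0=0$, the momentum dispersion at step $t$ is $(1-\prod_s\beta_s)^2(a\bar x+c)^2$, because everything is linear and stationary. So the bias vanishes asymptotically only through the factor $1-\prod\beta_s\to1$, and $x^t\to\bar x$ still holds.

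\textbf{Main obstacle.} The delicate step is making sure the iterates stay on the side $x\ge0$ where the linear bound $\abs{ax+c}\le G+B\mu x$ is tight. It also requires that the momentum or transient phase does not push $x$ across $0$ or break the fixed-point argument for the general $\beta_t,\gamma_t$ schedules, including large stepsizes $\gamma_0\le\delta/L$.

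I would handle this in three steps. First, choose $x^0=\bar x$. Second, show by induction that $x^t$ stays in $[0,\bar x]$, or exactly at $\bar x$, using $\gamma_t\mu\le1$. Third, for momentum, write the linear recursion for the pair $(x^t,\bar m^t)$ and show that its fixed point is $(\bar x,\mu(1-\sqrt\kappa B)\bar x-\dots)=(\bar x,0)$ with a stable contraction under $\beta_t=1-32\gamma_tL$.
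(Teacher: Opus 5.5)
\textbf{The gap: the rescaling step.} With $a=B\mu$, $c=G$ your instance violates Assumption~\ref{heterogeneity} exactly where the iterates live: for $x>0$ the cross term gives $(B\mu x+G)^2>G^2+B^2\mu^2x^2$. Rescaling $a$ and $c$ by $1/\sqrt{2}$ does not merely change constants, because $a$ enters the denominator of the fixed point. The fixed point becomes $\bar x=\frac{\sqrt{\kappa}\,G}{\sqrt{2}\,\mu(1-\sqrt{\kappa/2}\,B)}$, so $\mu^2\bar x^2=\frac{\kappa G^2}{2(1-\sqrt{\kappa/2}\,B)^2}<(3+2\sqrt{2})\,\kappa G^2$ for every $\kappa B^2<1$. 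This is $\Theta(\kappa G^2)$ with no blow-up as $\kappa B^2\to 1$, so your closing comparison with $\frac{\kappa G^2}{1-\kappa B^2}$ no longer applies.

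In fact no admissible instance can give your unscaled value. At a fixed point of R-DGD the aggregated vector is zero, so the aggregation error equals $-\nabla f_{\mh}(\bar x)$. Assumptions~\ref{heterogeneity} and~\ref{aggregation} then give $\norm{\nabla f_{\mh}(\bar x)}^2\le\kappa\bigl(G^2+B^2\norm{\nabla f_{\mh}(\bar x)}^2\bigr)$, i.e., $\norm{\nabla f_{\mh}(\bar x)}^2\le\frac{\kappa G^2}{1-\kappa B^2}$. This is strictly below $\frac{\kappa G^2}{(1-\sqrt{\kappa}B)^2}$ whenever $\kappa,B,G>0$.

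To be fair, the paper's instance ($\delta=\frac{\sqrt{3}}{2}B\mu$, $\epsilon=G/2$) has the same limitation. Its drifted minimizer satisfies $\mu^2(x_F^*)^2=\frac{\kappa G^2}{4(1-\frac{\sqrt{3}}{2}\sqrt{\kappa}B)^2}<(7+4\sqrt{3})\,\kappa G^2$, and this is matched to $\mo\bigl(\frac{\kappa G^2}{1-\kappa B^2}\bigr)$ only through a loose use of $\mo(\cdot)$.

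The fix, for your argument and the paper's alike, is to let the instance depend on $\kappa$: take $a=\sqrt{\kappa}B^2\mu$ and $c=\sqrt{1-\kappa B^2}\,G$. Writing $ax+c=\sqrt{\kappa}B\cdot(B\mu x)+\sqrt{1-\kappa B^2}\cdot G$, Cauchy--Schwarz gives $(ax+c)^2\le B^2\mu^2x^2+G^2$ for all $x$. The drifted derivative is $\mu(1-\kappa B^2)x-\sqrt{\kappa(1-\kappa B^2)}\,G$, so $\bar x=\frac{\sqrt{\kappa}\,G}{\mu\sqrt{1-\kappa B^2}}$. Hence $\norm{\nabla f_{\mh}(\bar x)}^2=\frac{\kappa G^2}{1-\kappa B^2}$ and $f_{\mh}(\bar x)-f_{\mh}^*=\frac{\kappa G^2}{2\mu(1-\kappa B^2)}$, which meets the fixed-point cap with equality.

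\textbf{Comparison with the paper's route.} Apart from this, your route is the paper's: a one-dimensional quadratic, two groups of honest workers with gradient deviations $\pm(ax+c)$, and a shift of the honest mean by $\sqrt{\kappa}$ times the honest standard deviation, which turns the method into descent on a drifted quadratic. The differences are in how the nonlinearity is handled.

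The paper uses the signed shift $\bar g-\sqrt{\kappa}(\delta x+\epsilon)$, i.e., $\bar g-\sqrt{\kappa}(g_1-g_2)/2$ with $n=2$ and $b=0$. It is linear in $x$ everywhere, so R-DGD and R-DGD-M are exactly GD and momentum GD on the drifted quadratic from any initialization, and your ``main obstacle'' never arises.

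Your unsigned rule $\bar x_{\mh}-\sqrt{\kappa V}$ is permutation invariant but only piecewise linear. You neutralize this by starting at $x^0=\bar x$, which works more cleanly than you suggest. For R-DGD the update is identically zero. For R-DGD-M with $m_i^0=0$, induction gives $m_i^t=\bigl(1-\prod_{s\le t}\beta_s\bigr)\nabla f_i(\bar x)$, so the aggregate is this factor times the drifted derivative at $\bar x$, which is zero. Hence $x^t\equiv\bar x$ for every stepsize and momentum schedule. You need neither ``$T$ sufficiently large'', nor your proposed contraction argument for $(x^t,\bar m^t)$, nor the paper's appeal to classical convergence of momentum GD under time-varying $\gamma_t,\beta_t$.

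The price is that you cover one initialization rather than all, which suffices for an existence statement. Also fix $b=0$ as the paper does, so that the robustness condition involves a single honest set.
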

    
    \textbf{Comparison to previous lower bounds.} \citet{shi2025optimal} have established a lower bound of $\mo\left(\kappa G^2\right)$ for the $(G,0)$-bounded dissimilarity but left the general $(G,B)$-bounded dissimilarity case unexplored. In the $(G,B)$ setting,
\citet{allouah2023robust} have derived a minimum error of $\mo\left(\frac{b}{n-(2+B^2)b}G^2\right)$  for any distributed algorithm with $b$ Byzantine workers.  Theorem \ref{thm for lower bound for heterogeneity} offers a strictly stronger characterization:  it confirms the tightness of the error term $\mo\left(\frac{\kappa G^2}{1-\kappa B^2}\right)$ for any value of $\kappa$. This is tighter than that of \citet{allouah2023robust} whenever $\kappa\gg \frac{b}{n-2b}$, highlighting the necessity of our $\kappa$-dependent analysis.
    
\subsection{The Impact of Stochastic Noise}
    Next, we demonstrate that the Byzantine error $\mo\left(\frac{\kappa\sigma^2}{1-\kappa B^2}\right)$ stemming from stochastic noise is theoretically unavoidable for R-DSGD. To establish this lower bound, it suffices to consider the case where $G=0$.
    \begin{theorem}\label{thm for lower bound for randomness}
       Given constants $B,\kappa\geq 0$ such that $\kappa B^2<1$, there exists an optimization problem where (i) the objective function is smooth and satisfies PL condition with a common parameter $\mu>0$, and (ii) the honest local objective functions satisfy the $(0,B)$-bounded dissimilarity, along with a specific $(b,\kappa)$-aggregation rule, such that: if the number of the iteration $T$ is sufficiently large, the last iterate $x^{T-1}$ generated by D-RSGD 
       with the  stepsizes in Theorem \ref{convergence for nonconvex R-DSGD} or Theorem \ref{convergence for PL R-DSGD} satisfies
        \begin{align*}
            \ep\left[\norm{\nabla f_{\mh}(x^{T-1})}^2\right]&\geq \mo\left(\frac{\kappa\sigma^2}{1-\kappa B^2}\right), \\
            \ep\left[f_{\mh}(x^{T-1})\right]-f_{\mh}(x^*)&\geq\mo\left(\frac{\kappa \sigma^2}{\mu(1-\kappa B^2)}\right).
        \end{align*}
    \end{theorem}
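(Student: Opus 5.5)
I will prove Theorem~\ref{thm for lower bound for randomness} with an explicit one-dimensional quadratic instance and an adversarial aggregation rule. The rule injects, at every round, a bias as large as $(b,\kappa)$-robustness permits, and always in the direction that pushes the iterate away from the minimizer. The first step builds the problem. Take $d=1$ and honest local objectives $f_i(x)=\frac{a_i}{2}x^2$ with $\frac1h\sum_{i\in\mh}a_i=\mu$ and $\frac1h\sum_{i\in\mh}(a_i-\mu)^2=B^2\mu^2$. For instance, split the honest workers into two equal groups with $a_i=\mu(1\pm B)$; if $B>1$, use a larger imbalanced spread and allow nonconvex local terms, since only $f_{\mh}$ needs to be smooth and PL. Then $\nabla f_i(x)-\nabla f_{\mh}(x)=(a_i-\mu)x$, so the dissimilarity is exactly $B^2\norm{\nabla f_{\mh}(x)}^2$ and $(0,B)$-bounded dissimilarity holds. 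The global objective $f_{\mh}=\frac{\mu}{2}x^2$ is $\mu$-smooth and $\mu$-PL, with $x^*=0$. For the oracle, let each honest worker return $g_i=a_ix+\sigma\epsilon_i$, where the $\epsilon_i$ are independent Rademacher signs. This gives variance exactly $\sigma^2$.

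The second step defines the aggregation rule. Given inputs $x_1,\dots,x_n$ and the true $\mh$, the rule outputs
\[
\ma=\bar x_{\mh}+\operatorname{sign}(\bar x_{\mh}')\sqrt{\kappa V},\qquad V=\frac1h\sum_{i\in\mh}\norm{x_i-\bar x_{\mh}}^2,
\]
where $\bar x_{\mh}'$ is chosen so that the perturbation has the same sign as the current iterate, i.e.\ it pushes away from $0$. Strictly, the rule must be a function of the inputs only and satisfy the inequality for \emph{every} subset of size $n-b$. To get this, I would use a construction where all $n$ workers are honest ($b$ Byzantine workers imitating honest ones is allowed), or else restrict the bias to $\sqrt{\kappa\min_{S}V_S}$. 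This subset-uniformity is the main technical obstacle, and I would first try the ``no Byzantine present'' reading, as in the no-attack accuracy discussion. Whatever the fix, the key identity is this: the honest dispersion at iterate $x$ equals
\[
V=\frac1h\sum_{i\in\mh}\bigl((a_i-\mu)x+\sigma(\epsilon_i-\bar\epsilon)\bigr)^2 .
\]
Its expectation is about $B^2\mu^2x^2+\sigma^2(1-1/h)$, and its cross term vanishes in expectation.

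The third step analyzes the recursion. R-DSGD gives
\[
x^{t}=x^{t-1}-\gamma_t\Bigl(\mu x^{t-1}+\sigma\bar\epsilon+\operatorname{sign}(x^{t-1})\sqrt{\kappa V}\Bigr).
\]
To keep things tractable, I would instead let the adversary inject the bias in squared form through a quantity $s_t$. Define $s_t$ via a lower bound $\sqrt{\kappa V}\ge\sqrt{\kappa}\,(c\sigma+B\mu\abs{x})$ up to constants; this is valid whenever the signs of the Rademacher dispersion are nondegenerate, which holds deterministically for the two-group $\pm1$ construction with $V\ge \sigma^2\cdot$const. The effective dynamics are then
\[
\abs{x^t}\gtrsim(1-\gamma_t\mu)\abs{x^{t-1}}+\gamma_t\sqrt{\kappa}\,(c\sigma+B\mu\abs{x^{t-1}})\quad\text{(modulo noise)}.
\]
This means descent on an effective curvature $\mu(1-\sqrt{\kappa}B)$ with a constant push $\sqrt{\kappa}\sigma$, whose fixed point satisfies
\[
\abs{x}\approx\frac{\sqrt{\kappa}\sigma}{\mu(1-\sqrt\kappa B)}.
\]
Squaring gives $\mu^2x^2\gtrsim\kappa\sigma^2/(1-\sqrt\kappa B)^2\ge\kappa\sigma^2/(1-\kappa B^2)$, because $(1-\sqrt\kappa B)^2\le1-\kappa B^2$. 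This yields both claimed bounds, using $\norm{\nabla f_{\mh}}^2=\mu^2x^2$ and $f_{\mh}-f^*=\frac{\mu}{2}x^2$. The convergence to the fixed point holds for the constant stepsize of Theorem~\ref{convergence for nonconvex R-DSGD}. For the decaying schedule \eqref{eq:stepsize}, the recursion is a weighted average of the push and still converges to the same floor as $T\to\infty$.

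The noise term $\sigma\bar\epsilon$ has zero mean. To handle it, I would track $\ep\abs{x^t}$ via Jensen, or track $\ep[(x^t)^2]$ directly and keep only the nonnegative cross terms. I expect this, together with making the sign-dependent rule legitimately $(b,\kappa)$-robust, to be the delicate part of the proof.
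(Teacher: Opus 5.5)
Your instance and the $b=0$ reading match the paper's. The step that fails is your lower bound $\sqrt{\kappa V}\ge\sqrt{\kappa}\,(c\sigma+B\mu\abs{x})$ with coefficient one on $B\mu\abs{x}$. Both your fixed point $\sqrt{\kappa}\sigma/(\mu(1-\sqrt{\kappa}B))$ and the final comparison $(1-\sqrt{\kappa}B)^2\le 1-\kappa B^2$ depend on it, and it fails in three ways.

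\emph{It fails pointwise.} In your two-group Rademacher instance with $h=2$, $V=(B\mu x+\sigma D)^2$ with $D=(\epsilon_1-\epsilon_2)/2\in\{-1,0,1\}$, taken with probabilities $\tfrac14,\tfrac12,\tfrac14$. This vanishes at $x=0$ when $\epsilon_1=\epsilon_2$, and at $B\mu\abs{x}=\sigma$ when $D=-\operatorname{sign}(x)$. For larger $h$, the noise part $\sigma^2(1-\bar\epsilon^2)$ is zero with positive probability and the cross term can be negative.

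\emph{It fails on average.} $\ep[\sqrt{V}\mid x]\le\sqrt{\ep[V\mid x]}=\sqrt{B^2\mu^2x^2+(1-1/h)\sigma^2}\le B\mu\abs{x}+\sigma^2/(2B\mu\abs{x})$. So the additive $\sigma$-push disappears once $B\mu\abs{x}\gg\sigma$, which is exactly where your fixed point lies as $\sqrt{\kappa}B\to 1$.

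\emph{It contradicts the paper's upper bound.} Your conclusion is $\mu^2x^2\gtrsim\kappa\sigma^2/(1-\sqrt{\kappa}B)^2$. Your rule is $(0,\kappa)$-robust, so Theorem~\ref{convergence for PL R-DSGD specific} (with $\delta$ small) gives $\mu^2\ep[(x^{T-1})^2]=2\mu(\ep[f_{\mh}(x^{T-1})]-f_{\mh}^*)\lesssim\kappa\sigma^2/(1-\kappa B^2)$ for large $T$. But $(1-\kappa B^2)/(1-\sqrt{\kappa}B)^2=(1+\sqrt{\kappa}B)/(1-\sqrt{\kappa}B)$ is unbounded.

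For $h=2$ the exact value is $\ep[\sqrt{V}\mid x]=\tfrac12 B\mu\abs{x}+\tfrac12\max\{B\mu\abs{x},\sigma\}$. Its fixed point is $\sqrt{\kappa}\sigma/(2\mu(1-\tfrac{\sqrt{\kappa}}{2}B))$, which is exactly the paper's $x_F^*$. Separately, your displayed recursion has the wrong sign: pushing away from $0$ requires $\ma=\bar x_{\mh}-\operatorname{sign}(x^{t-1})\sqrt{\kappa V}$.

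The paper avoids sign-dependent nonlinear dynamics altogether. Realization by realization, it picks a signed root $W\in\{B\mu x,\,-B\mu x+\sigma,\,B\mu x+\sigma\}$ of the dispersion. Robustness then holds with equality, while $\ep W=\tfrac12(B\mu x+\sigma)$ is affine. Hence $\ma$ is an unbiased stochastic gradient of the strongly convex quadratic $F(x)=\tfrac{\mu}{2}(1-\tfrac{\sqrt{\kappa}}{2}B)x^2-\tfrac{\sqrt{\kappa}}{2}\sigma x$, and R-DSGD is literally SGD on $F$. Lemmas~\ref{advanced convergence result for SGD} and~\ref{advanced nonasymptotic convergence result for SGD} then give $\ep[(x^{T-1}-x_F^*)^2]\to 0$. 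On $0<B\mu x<\sigma$, which contains $x_F^*$, this $W$ coincides with your sign-aligned rule.

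Your route can be repaired without those lemmas.
\begin{itemize}
\item Set $s_t=\operatorname{sign}(x^{t-1})$, multiply the recursion by $s_t$, and use $\abs{x^t}\ge s_tx^t$. This gives $\ep\abs{x^t}\ge(1-\gamma_t\mu)\ep\abs{x^{t-1}}+\gamma_t\sqrt{\kappa}\,\ep\sqrt{V_t}$. The noise term $\gamma_t\sigma s_t\bar\epsilon^t$ has zero conditional mean, so it needs no control.
\item Insert the $h=2$ bound $\ep[\sqrt{V_t}\mid\mathcal{P}_t]\ge\tfrac12(B\mu\abs{x^{t-1}}+\sigma)$. This yields a linear recursion forcing $\liminf\ep\abs{x^{T-1}}\ge x_F^*$ whenever $\gamma_t\mu\le 1$ and $\sum_t\gamma_t\to\infty$.
\item Conclude with $\ep[(x^{T-1})^2]\ge(\ep\abs{x^{T-1}})^2$.
\end{itemize}
This is more elementary than the paper's argument and needs no stepsize decay.

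Both versions produce the factor $(1-\tfrac{\sqrt{\kappa}}{2}B)^{-2}\le 4$. The $(1-\kappa B^2)^{-1}$ in the statement therefore enters only through $(1-\tfrac{\sqrt{\kappa}}{2}B)^2\le 1-\tfrac{\kappa}{4}B^2$. On average, no $(b,\kappa)$-robust push can exceed $\sqrt{\kappa}\sqrt{B^2\mu^2x^2+\sigma^2}$, whose fixed point is $\mu^2x^2=\kappa\sigma^2/(1-\kappa B^2)$, never $(1-\sqrt{\kappa}B)^{-2}$. Approaching that value requires $V$ to concentrate, e.g.\ many honest workers together with Jensen on the convex map $u\mapsto\sqrt{B^2\mu^2u^2+\sigma^2}$.

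Finally, with $b=0$ robustness is only required for $\mh=[n]$. The subset-uniformity issue and the $\min_S V_S$ fallback are therefore moot.
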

    
    \textbf{Comparison to previous lower bounds.} \citet{karimireddy2021learning} have shown that any permutation invariant method (including R-DSGD) is subject to an error of $\mo\left(\frac{b}{n}\sigma^2\right).$ Theorem \ref{thm for lower bound for randomness} generalizes this limitation to the setting of the $(0,B)$-bounded dissimilarity.  Our lower bound not only validates the tightness of the noise-induced error term $\mo\left(\frac{\kappa\sigma^2}{1-\kappa B^2}\right)$ in the convergence analysis in Section \ref{sec:alg:convergence} but also reveals a critical insight: when $\kappa \gg \mo(b/n)$, the fundamental limit is dictated by the robustness parameter $\kappa$ rather than simply the fraction $b/n$ of Byzantine users. This provides a more precise characterization of robustness in stochastic settings.

    \begin{figure*}[!t]
    \centering
    \begin{subfigure}[b]{0.49\textwidth}
        \centering
        \includegraphics[width=1.05\textwidth]{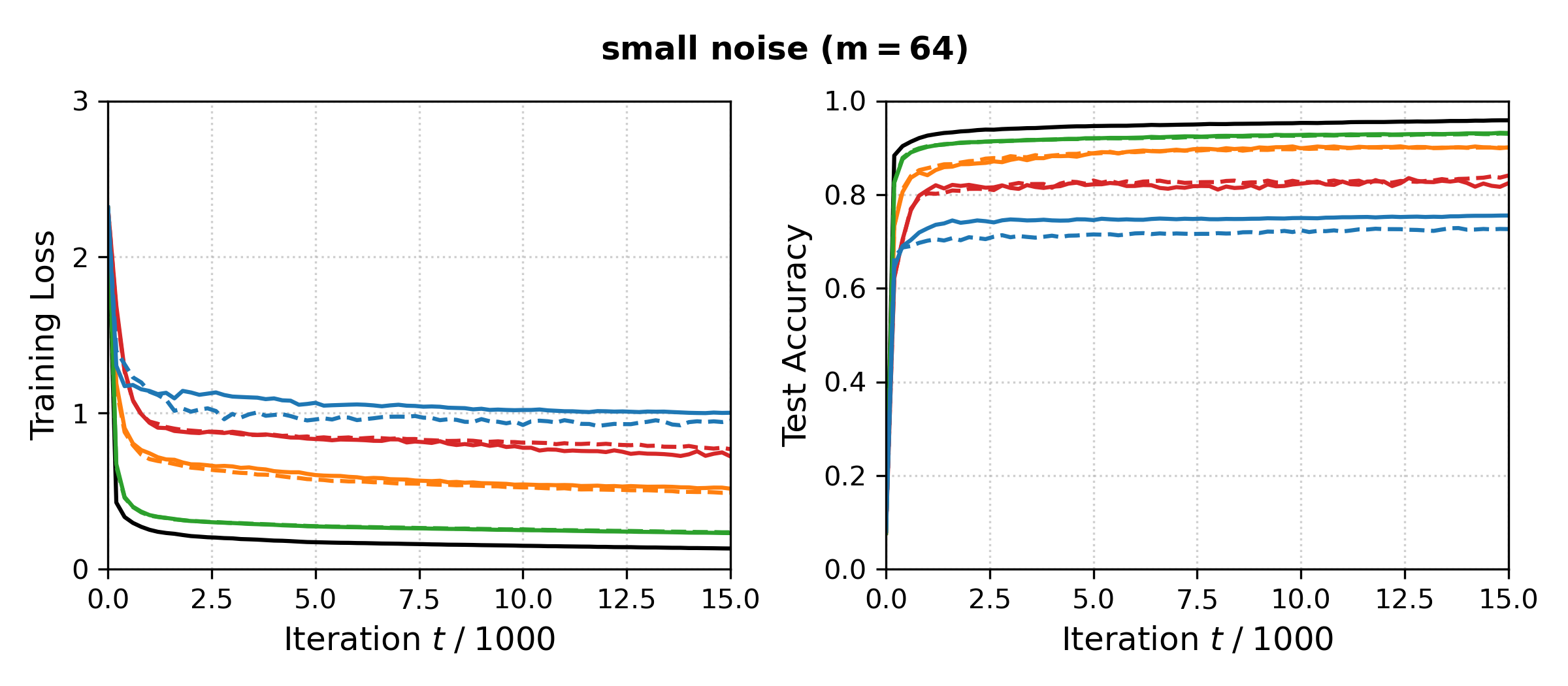}
    \end{subfigure}
    \hfill 
    \begin{subfigure}[b]{0.49\textwidth}
        \centering
        \includegraphics[width=1.05\textwidth]{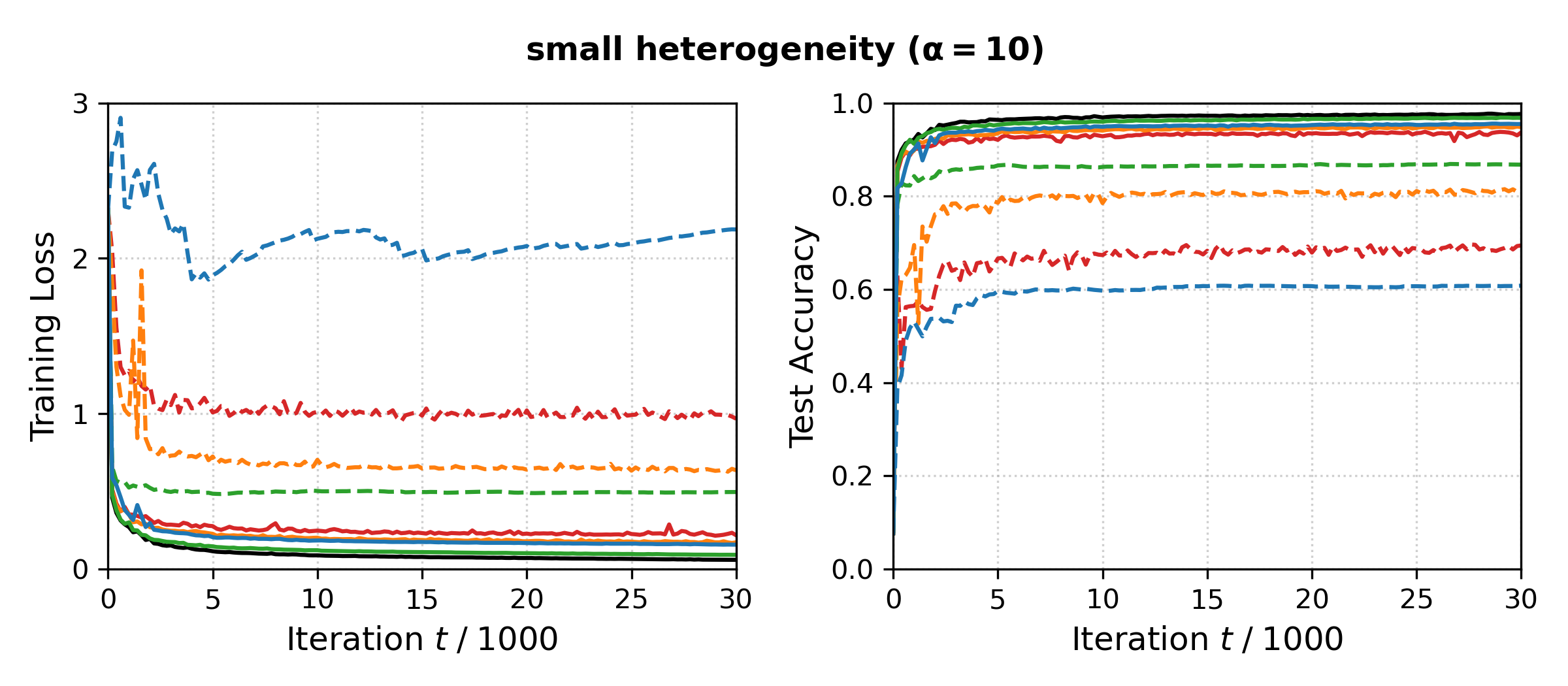}
    \end{subfigure}

    \begin{subfigure}[b]{0.6\textwidth}
        \centering
        \includegraphics[width=\textwidth]{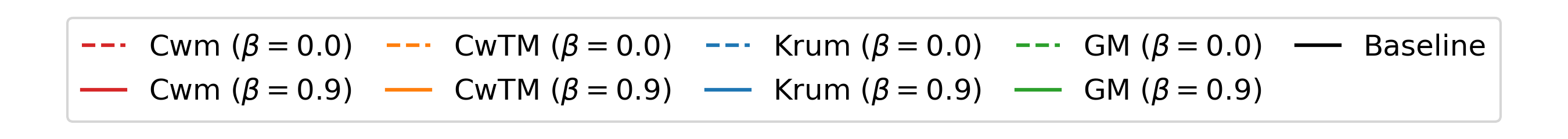}
    \end{subfigure}
    \caption{Performance comparison of R-DSGD (dashed lines) and R-DSGD-M (solid lines) on training MLP in the MNIST dataset using various robust aggregation rules in the absence of Byzantine attacks. Left: Settings with large batch size ($m=64$) and high data heterogeneity ($\alpha=0.1$). Right: Settings with small batch size ($m=1$) and low data heterogeneity ($\alpha=10$).}
    \label{no attack performance}
    
    \centering
    \begin{subfigure}[b]{0.49\textwidth}
        \centering
        \includegraphics[width=1.05\textwidth]{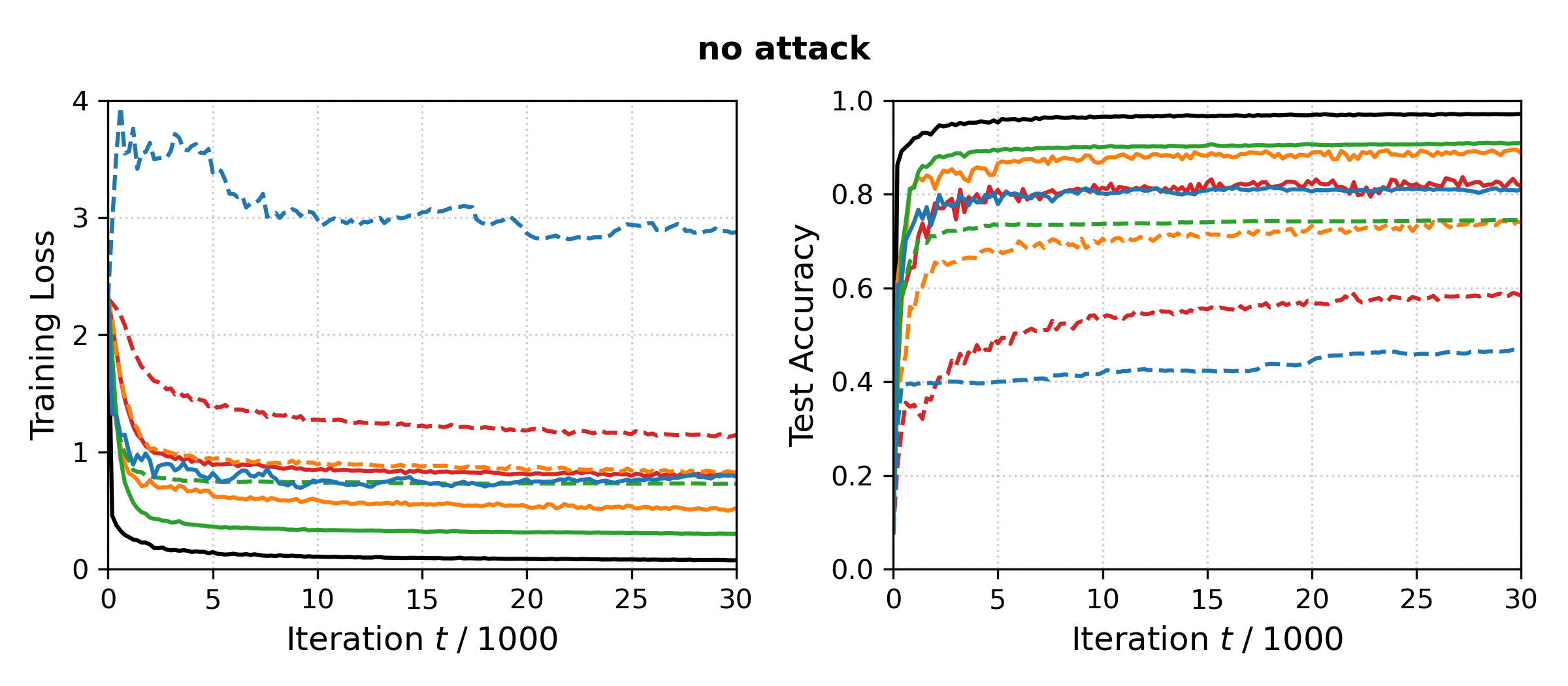}
    \end{subfigure}
    \hfill %
    \begin{subfigure}[b]{0.49\textwidth}
        \centering
        \includegraphics[width=1.05\textwidth]{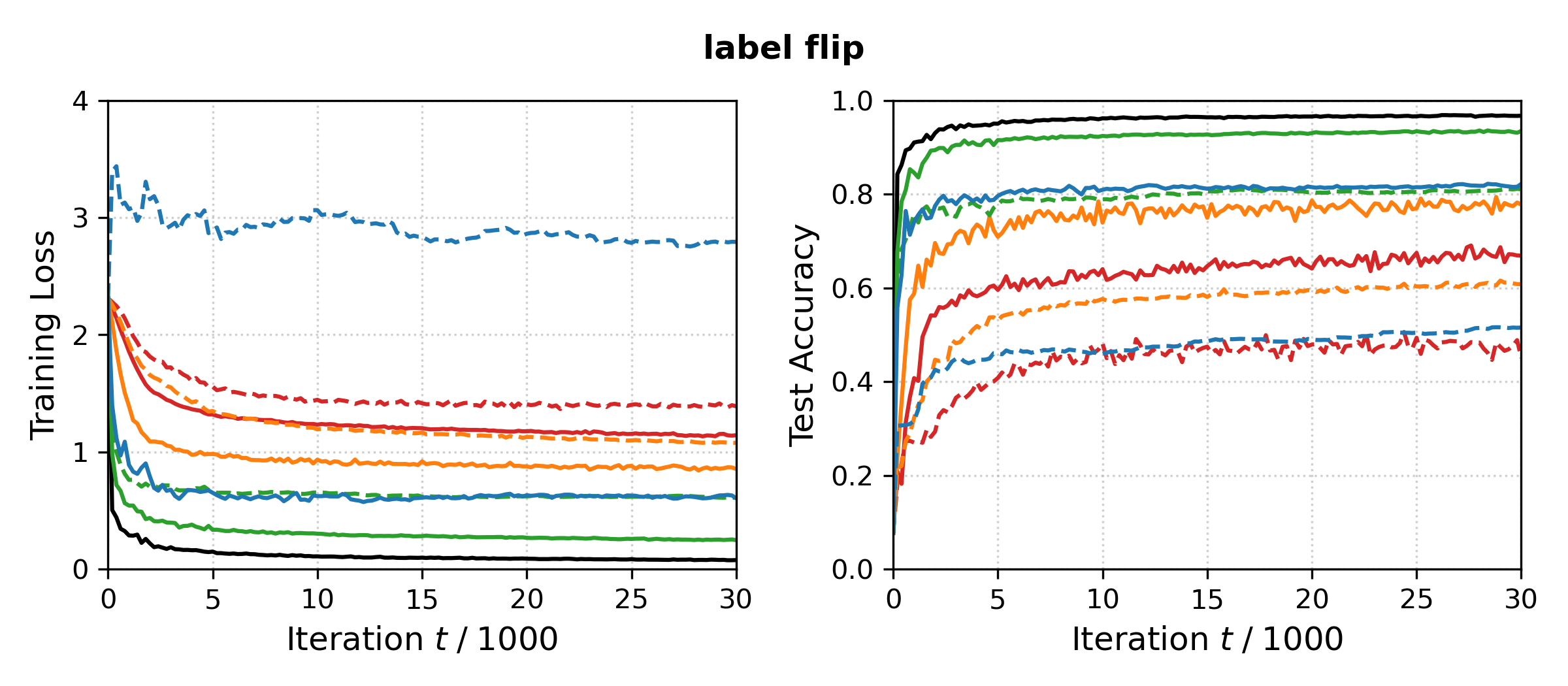}
    \end{subfigure}

    \begin{subfigure}[b]{0.49\textwidth}
        \centering
        \includegraphics[width=1.05\textwidth]{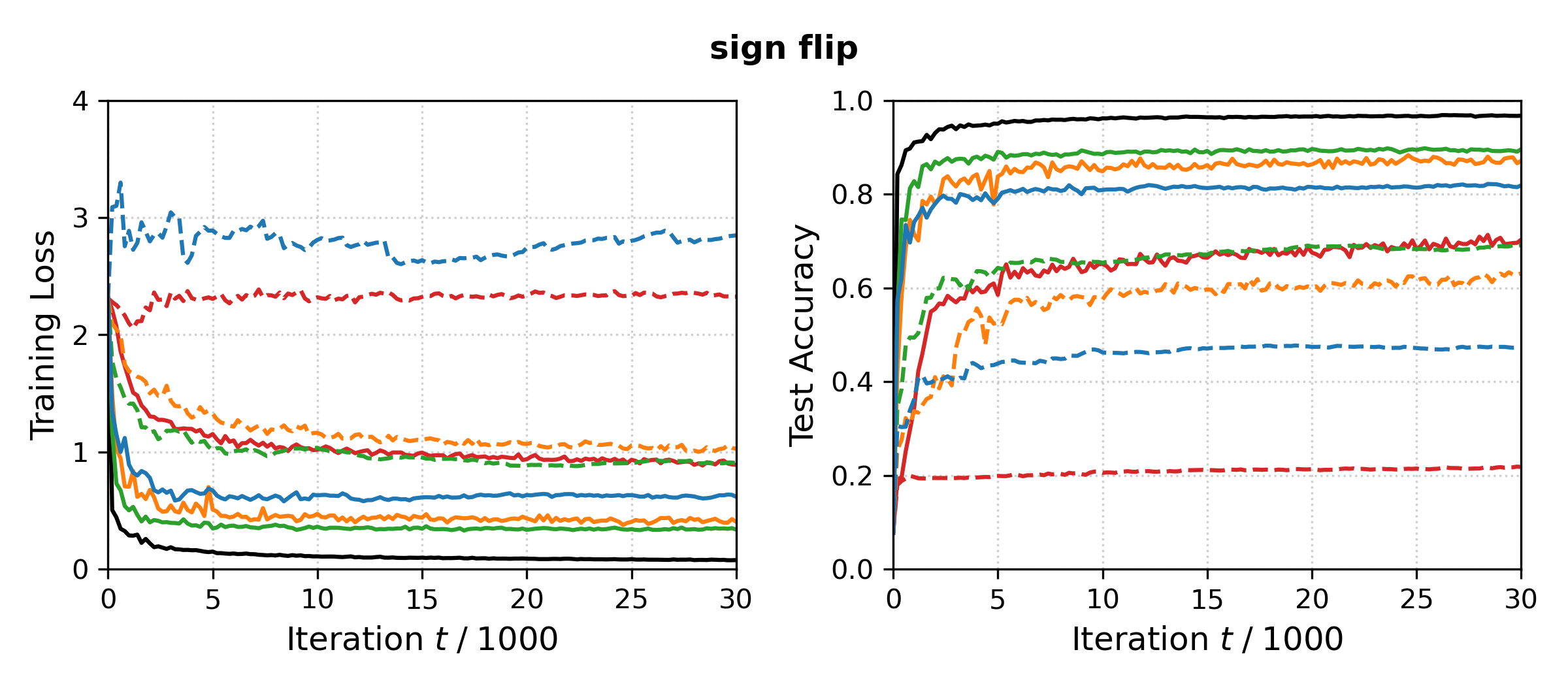}
    \end{subfigure}
    \hfill
    \begin{subfigure}[b]{0.49\textwidth}
        \centering
        \includegraphics[width=1.05\textwidth]{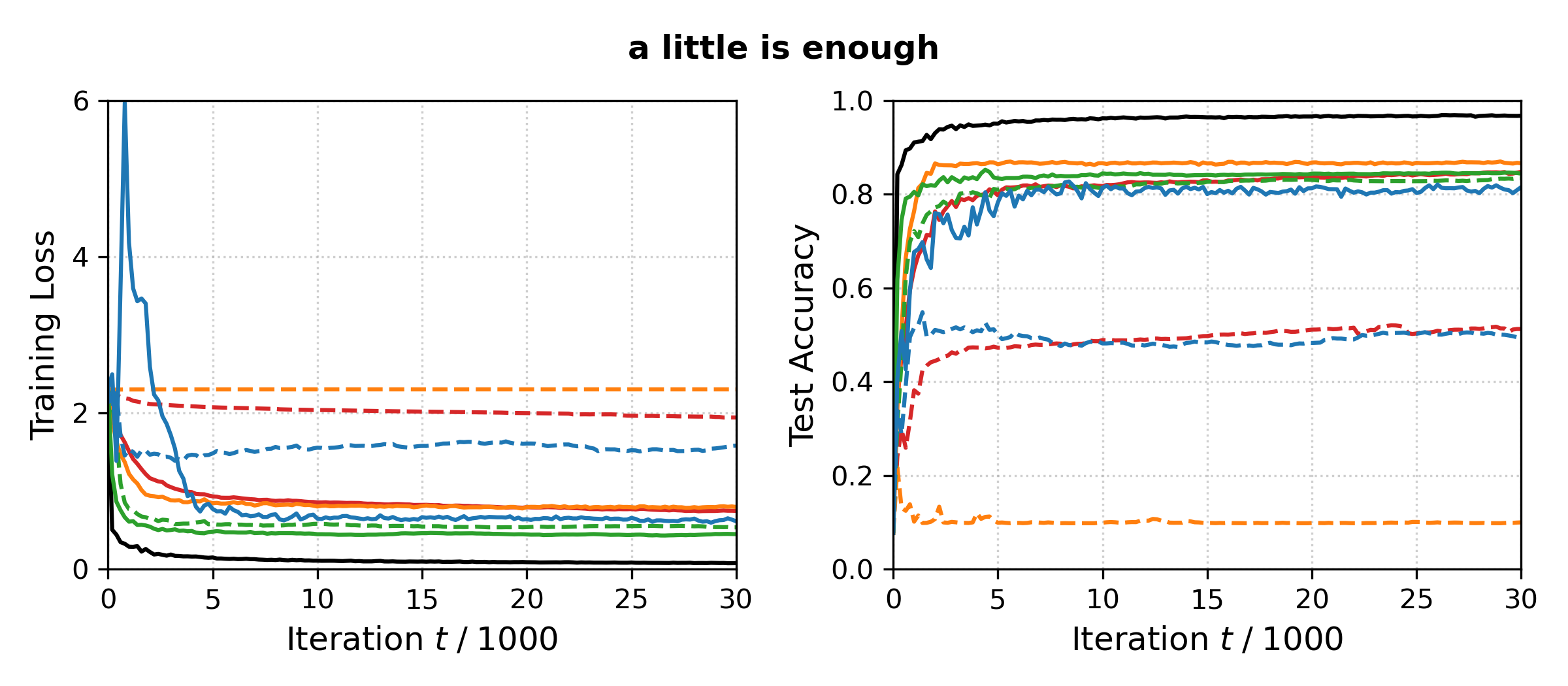}
    \end{subfigure}
    \begin{subfigure}[b]{0.6\textwidth}
        \centering
        \includegraphics[width=\textwidth]{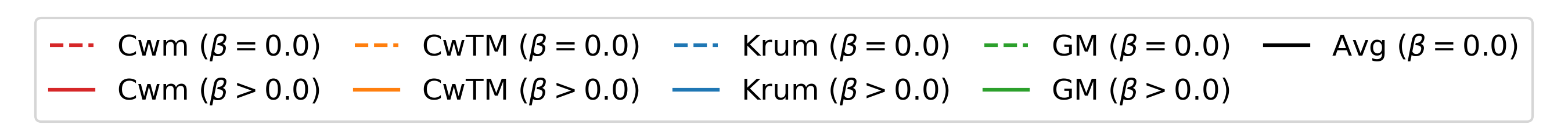}
    \end{subfigure}
    \caption{Performance comparison of R-DSGD (dashed lines) and R-DSGD-M (solid lines) on training MLP in the MNIST dataset with various robust aggregation rules under four scenarios: no attack (top-left), label flip attack (top-right), sign flip attack (bottom-left), and ``a little is enough" attack (bottom-right). All experiments use a batch size of $m=1$ and a Dirichlet distribution parameter of $\alpha=0.1$, simulating an environment with large stochastic noise and high data heterogeneity.}
    \label{attack performance}
    
    \end{figure*}
    
\section{Experiments}
\begin{figure*}[!t]
    \centering
    \begin{subfigure}[b]{0.49\textwidth}
        \centering
        \includegraphics[width=\textwidth]{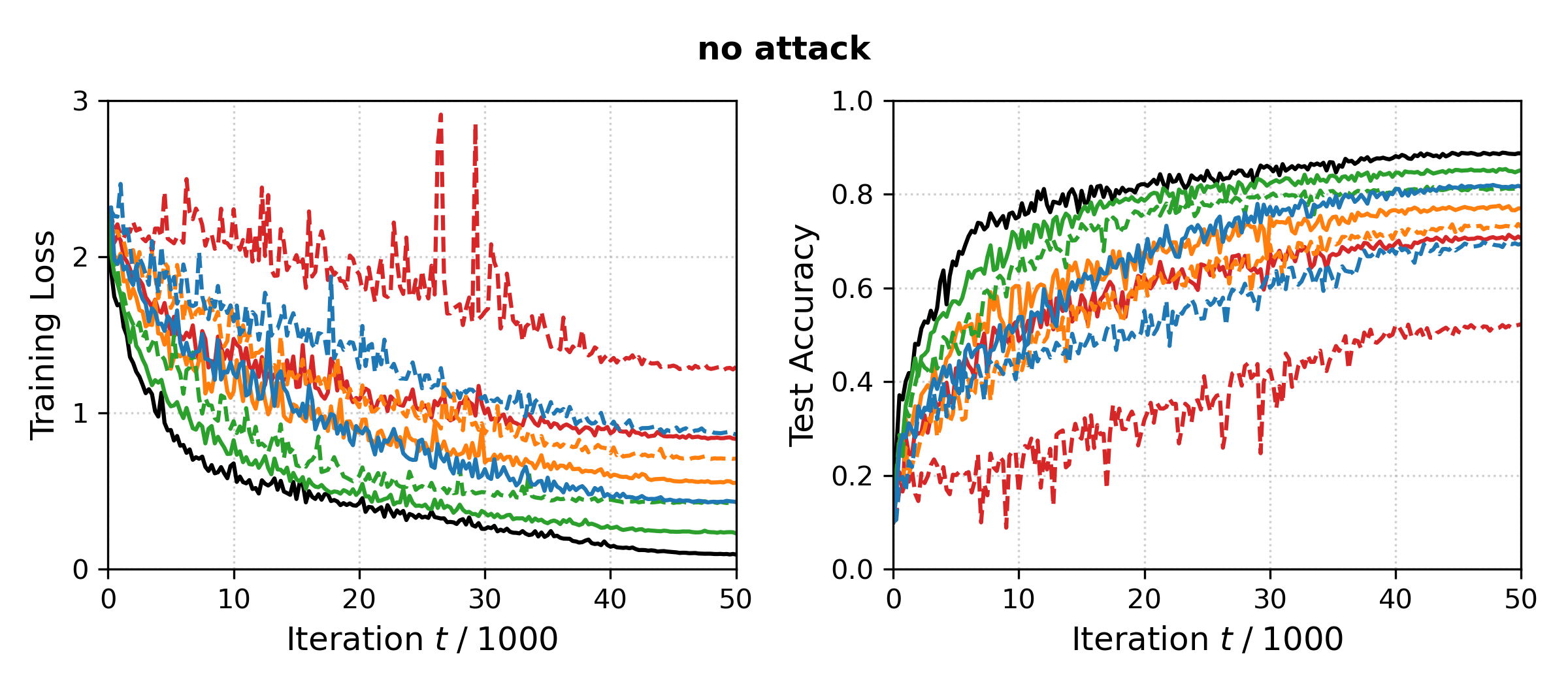}
    \end{subfigure}
    \hfill %
    \begin{subfigure}[b]{0.49\textwidth}
        \centering
        \includegraphics[width=\textwidth]{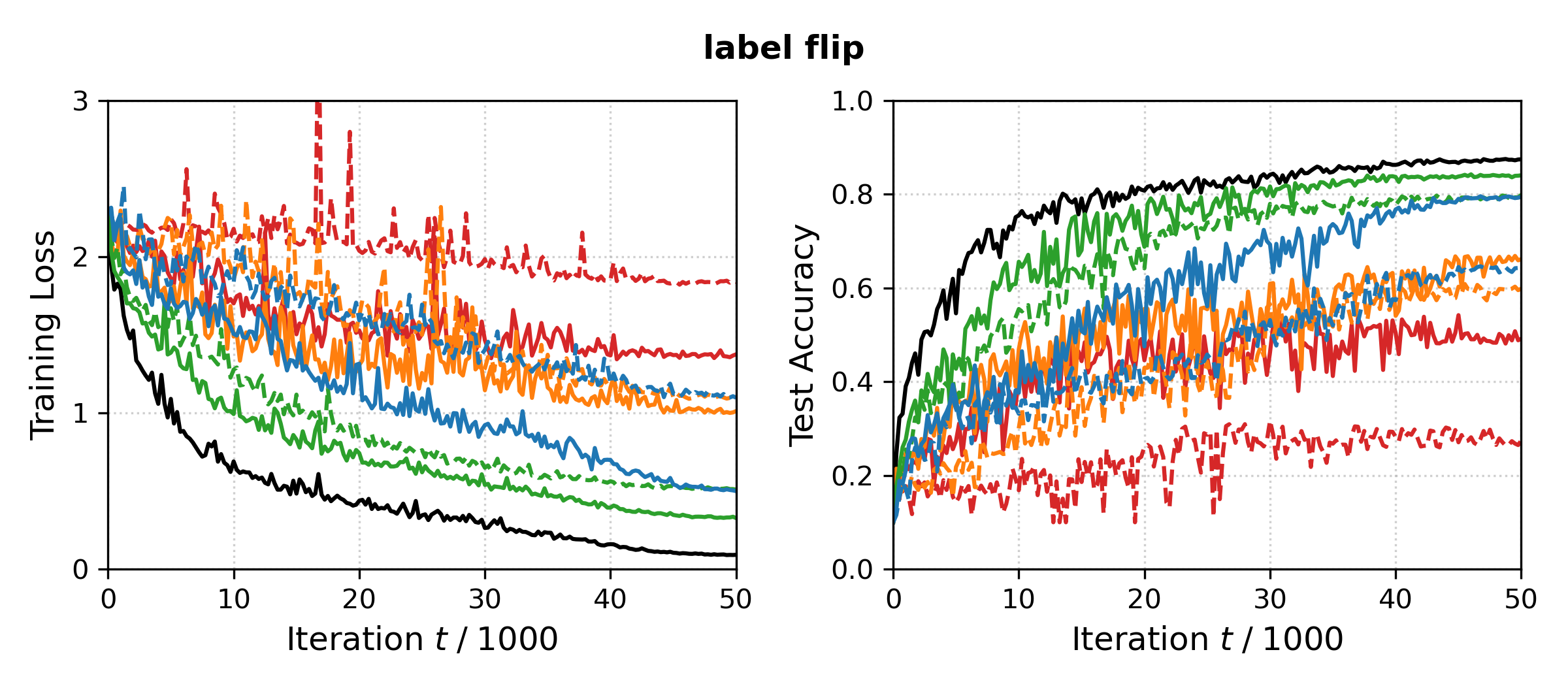}
    \end{subfigure}

    \begin{subfigure}[b]{0.49\textwidth}
        \centering
        \includegraphics[width=\textwidth]{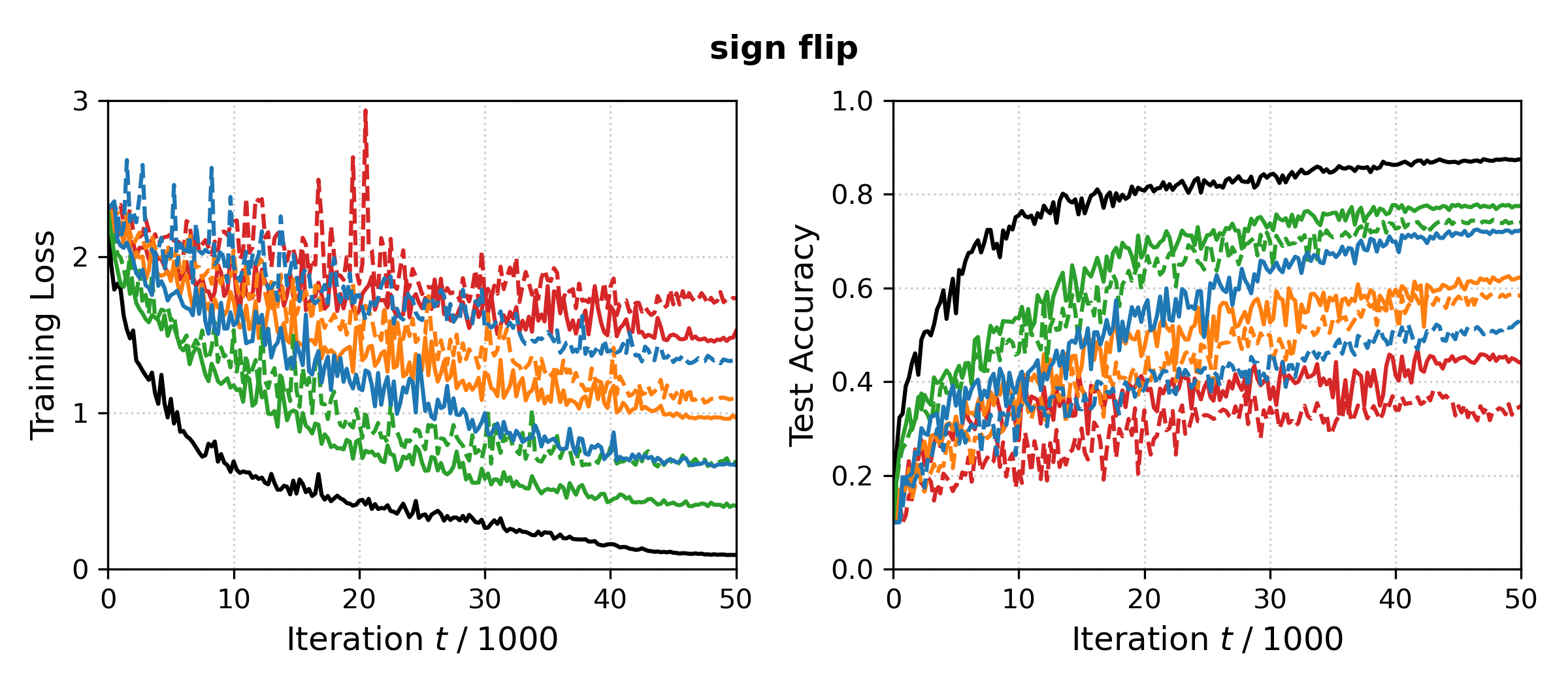}
    \end{subfigure}
    \hfill
    \begin{subfigure}[b]{0.49\textwidth}
        \centering
        \includegraphics[width=\textwidth]{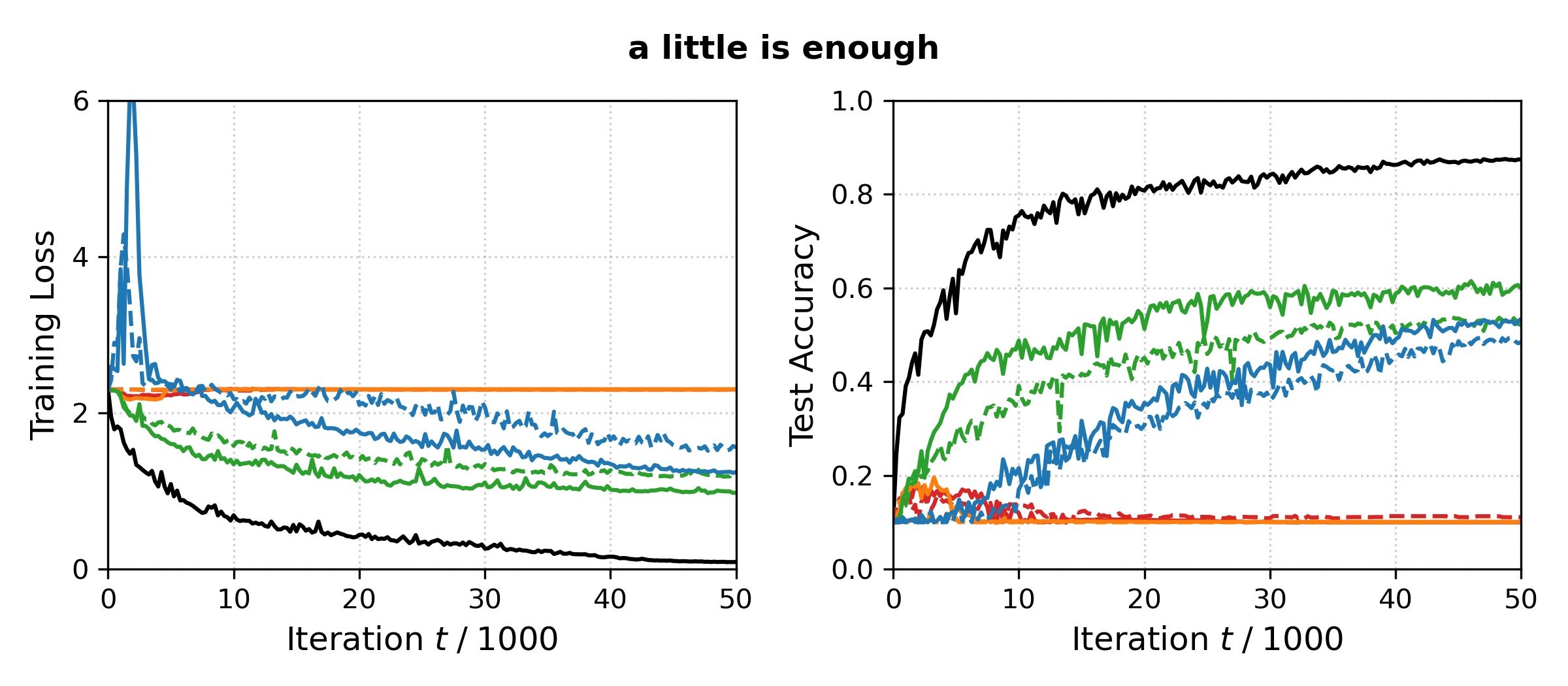}
    \end{subfigure}
    \begin{subfigure}[b]{0.6\textwidth}
        \centering
        \includegraphics[width=\textwidth]{figs/legends_cifar.png}
    \end{subfigure}
    \caption{Performance comparison of R-DSGD (dashed lines) and R-DSGD-M (solid lines) on training ResNet-20 in the CIFAR-10 dataset with various robust aggregation rules under four scenarios: no attack (top-left), label flip attack (top-right), sign flip attack (bottom-left), and ``a little is enough" attack (bottom-right). All experiments use a batch size of $m=4$ and a Dirichlet distribution parameter of $\alpha=0.5$, simulating an environment with large stochastic noise and high data heterogeneity.}
    \label{attack performance cifar}
    \end{figure*}
    In this section, we carry out synthetic experiments to validate the error bounds. We further evaluate the empirical performance of R-DSGD and R-DSGD-M  on the benchmark datasets and verify their consistency with our theoretical analyses. Due to space constraints, details of the experiments are provided in Appendix~\ref{Details and additional experiments}.
    
    \subsection{Synthetic Experiments}
    we design a controlled synthetic experiment to explicitly illustrate the impact of the heterogeneity parameter $B^2$ and the robustness parameter $\kappa$. We consider a one-dimensional optimization problem with $n$ workers, all of which are honest (i.e., $b=0$). The local objective functions $\{f_i\}_{i=1}^n$ are designed to exactly satisfy the $(G,B)$-bounded dissimilarity condition: $\frac{1}{n}\sum_{i=1}^n|\nabla f_i(x)-\nabla f(x)|^2\leq G^2+B^2|\nabla f(x)|^2$, and an aggregation rule $\ma$ is constructed such that $|\ma(x_1,x_2,\ldots,x_n)-\bar{x}|^2=\frac{\kappa}{n}\sum_{i=1}^n|x_i-\bar{x}|^2$ for any given $\kappa>0$. These constructions allow direction investigation of the impact of $\kappa$ and $B^2$ on the final error. 

    For fixed $\kappa$ and $G$, we vary $B^2$ to study its effect on the final error of R-DSGD and R-DSGD-M under the above aggregation rule. We also vary $\kappa$ to examine its impact. The results are presented in Figures~\ref{synthetic experiment}. For fixed $\kappa$, the final error of both R-DSGD and R-DSGD-M increases as $B^2$ grows, which is consistent with our theoretical bounds. Moreover, larger $\kappa$ further degrades performance and reduces the admissible range of $B^2$, providing empirical support for the necessity of the assumption $\kappa B^2<\mo(1)$.

    \subsection{MNIST Experiments}
    We evaluate R-DSGD and R-DSGD-M by training a Multilayer Perceptron (MLP) on the MNIST dataset \cite{mnist}. We employ four widely used aggregation rules: (Multi)-Krum, CwTM, CwM, and GM. 

To systematically analyze the effects of data heterogeneity and stochastic noise, we vary the data distribution and batch sizes as follows. (i) \textbf{Data heterogeneity:} training samples are distributed among $17$ workers following a Dirichlet distribution \cite{zhao2018federated} parameterized by $\alpha$. We use $\alpha=0.1$ to simulate high data heterogeneity (where workers hold distinct data subsets) and $\alpha=10$ for low data heterogeneity (where workers hold similar data distributions). (ii) \textbf{Stochastic noise:} We control the noise level via the batch size $m$. A small batch size of $m=1$ induces high stochastic noise, while a larger batch size of $m=64$ results in relatively low noise. By combining these settings, we can isolate the individual and joint impacts of heterogeneity and noise on convergence.

Our theoretical analysis holds for both benign and adversarial environments. Accordingly, we conduct experiments in two contexts: (1) \textbf{Benign (attack-free):} We test three scenarios: (i) high heterogeneity with low noise, (ii) low heterogeneity with high noise, and (iii) high heterogeneity with high noise. (2) \textbf{Adversarial:} We introduce $b=4$ Byzantine workers (out of $17$) under high heterogeneity and high noise. We evaluate resilience against three  typical attacks: label flip \cite{li2019rsa}, sign flip \cite{zhu2021byzantine}, and ``a little is enough" \cite{baruch2019little}.
As a baseline, we employ D-SGD-M averaging only over the honest workers (an ``oracle" baseline), consistent with the definition of the honest objective function in \eqref{the problem}.

    \textbf{The impact of data heterogeneity.}
     To isolate the effect of data heterogeneity, we set $\alpha = 0.1$ and $m=64$ in an attack-free environment (Figure \ref{no attack performance}, left panel). The results show that the performance differences between R-DSGD and R-DSGD-M in terms of training loss, top-1 test accuracy, and convergence rate—are negligible. However, both algorithms exhibit a noticeable performance gap compared to the baseline. This confirms the existence of a Byzantine error driven primarily by data heterogeneity. These observations are consistent with the theoretical analysis for R-DSGD-(M) in Section \ref{sec:alg:convergence} and the lower bound for the heterogeneity-induced error in Theorem \ref{thm for lower bound for heterogeneity}. 

\textbf{The impact of stochastic noise.} 
To isolate the effect of stochastic noise, we set $\alpha=10$ and $m=1$ in an attack-free environment (Figure \ref{no attack performance}, right panel). Here, R-DSGD performs much worse compared to the baseline. In contrast, R-DSGD-M achieves a comparable performance to the baseline. This aligns with our theoretical findings for R-DSGD-(M) in Section \ref{sec:alg:convergence}, which suggest that local momentum mitigates the Byzantine error caused by stochastic noise. Furthermore, the gap between R-DSGD and the baseline provides empirical validation of the noise-induced Byzantine error characterized in Theorem \ref{thm for lower bound for randomness}.
    
    \textbf{The joint impact of stochastic noise and data heterogeneity.} 
    Finally, we examine the combined effects of heterogeneity and noise by setting $\alpha = 0.1$ and $m=1$ (Figure \ref{attack performance}). The top-left panel shows the attack-free setting, while the remaining panels show performance under Byzantine attacks. Across all scenarios, incorporating local momentum consistently reduces the error in both training loss and test accuracy, demonstrating its efficacy in suppressing stochastic noise. However, even with local momentum, a persistent error gap remains compared to the baseline. This result is consistent with the unified conclusion of Section \ref{sec:alg:convergence}: while local momentum can dampen the effects of stochastic noise, it cannot eliminate the fundamental Byzantine error arising from data heterogeneity.

    \subsection{CIFAR-10 Experiments} 
    
    To validate our theoretical findings on a more challenging task, we conduct experiments on training a ResNet-20 model over the CIFAR-10 dataset. Similar to the MNIST setup, the training data are distributed across $n=17$ workers according to a Dirichlet distribution with parameter $\alpha$, and the stochastic noise level is controlled via the mini-batch size $m$.

    To examine the joint effect of data heterogeneity and stochastic noise, we set $\alpha=0.5$ and $m=4$. We evaluate the performance of R-DSGD and R-DSGD-M with four aggregation rules: Krum, CwTM, CwM, and GM, under three typical attacks: label flip, sign flip, and ``a little is enough". The results are shown in Figure~\ref{attack performance cifar}. The top-left panel corresponds to the attack-free setting, while the remaining panels present performance under Byzantine attacks. Across all scenarios, R-DSGD-M outperforms R-DSGD, while a persistent gap remains between R-DSGD-M and the baseline. This result is consistent with the unified conclusion of Section 3.

\section{Conclusion}
In this paper, we have conducted a comprehensive convergence analysis of R-DSGD and R-DSGD-M with $(b,\kappa)$-robust aggregation rules under the $(G,B)$-bounded dissimilarity. We demonstrate that, although stochasticity and data heterogeneity impose a fundamental upper bound on the error (i.e., the Byzantine error floor), local momentum serves as a provable mechanism to reduce the specific impact of stochastic variance. Furthermore, we provide a lower bound analysis confirming that the derived upper bounds on the Byzantine error are tight. These theoretical findings align well with our experimental observations. Future work may focus on relaxing the bounded stochastic noise assumption. Furthermore, it would be valuable to investigate whether our lower bound analysis is valid for a broader spectrum of Byzantine-robust methods and whether the bounded dissimilarity assumption can be further relaxed.

\bibliographystyle{icml2026}
\bibliography{main}

@article{lamport1982byzantine,
  title={The {B}yzantine Generals Problem},
  author={Lamport, Leslie and Shosta, Robert and Pease, Marshall},
  journal={ACM Transactions on Programming Languages and Systems},
  volume={4},
  number={3},
  pages={382--401},
  year={1982}
}

@InProceedings{brendan2017communication,
  title = 	 {Communication-Efficient Learning of Deep Networks from Decentralized Data},
  author = 	 {McMahan, Brendan and Moore, Eider and Ramage, Daniel and Hampson, Seth and Arcas, Blaise Aguera y},
  booktitle = 	 {Proceedings of the 20th International Conference on Artificial Intelligence and Statistics},
  pages = 	 {1273--1282},
  year = 	 {2017},
  volume = 	 {54}
}

@article{guerraoui2024byzantine,
  title={{B}yzantine machine learning: A primer},
  author={Guerraoui, Rachid and Gupta, Nirupam and Pinot, Rafael},
  journal={ACM Computing Surveys},
  volume={56},
  number={7},
  pages={1--39},
  year={2024},
  publisher={ACM New York, NY}
}

@article{wang2021field,
  title={A field guide to federated optimization},
  author={Wang, Jianyu and Charles, Zachary and Xu, Zheng and Joshi, Gauri and McMahan, H Brendan and Al-Shedivat, Maruan and Andrew, Galen and Avestimehr, Salman and Daly, Katharine and Data, Deepesh and others},
  journal={arXiv preprint arXiv:2107.06917},
  year={2021}
}

@inproceedings{allouah2023robust,
 author = {Allouah, Youssef and Guerraoui, Rachid and Gupta, Nirupam and Pinot, Rafael and Rizk, Geovani},
 booktitle = {Advances in Neural Information Processing Systems},
 pages = {45744--45776},
 title = {Robust Distributed Learning: Tight Error Bounds and Breakdown Point under Data Heterogeneity},
 volume = {36},
 year = {2023}
}

@article{shi2025optimal,
  author  = {Qiankun Shi and Jie Peng and Kun Yuan and Xiao Wang and Qing Ling},
  title   = {Optimal Complexity in {B}yzantine-Robust Distributed Stochastic Optimization with Data Heterogeneity},
  journal = {Journal of Machine Learning Research},
  year    = {2025},
  volume  = {26},
  number  = {268},
  pages   = {1--58},
}

@article{gupta2025reconciling,
  title={Reconciling Communication Compression and {B}yzantine-Robustness in Distributed Learning},
  author={Gupta, Diksha and Honsell, Antonio and Xu, Chuan and Gupta, Nirupam and Neglia, Giovanni},
  journal={arXiv preprint arXiv:2508.17129},
  year={2025}
}

@inproceedings{blanchard2017machine,
 author = {Blanchard, Peva and El Mhamdi, El Mahdi and Guerraoui, Rachid and Stainer, Julien},
 booktitle = {Advances in Neural Information Processing Systems},
 title = {Machine Learning with Adversaries: {B}yzantine Tolerant Gradient Descent},
 volume = {30},
 year = {2017}
}

@article{chen2017distributed,
author = {Chen, Yudong and Su, Lili and Xu, Jiaming},
title = {Distributed Statistical Machine Learning in Adversarial Settings: {B}yzantine Gradient Descent},
journal={Proceedings of the ACM on Measurement and Analysis of Computing Systems},
year = {2017},
volume = {1},
number = {2}
}

@InProceedings{yin2018byzantine,
  title = 	 {{B}yzantine-Robust Distributed Learning: Towards Optimal Statistical Rates},
  author =       {Yin, Dong and Chen, Yudong and Kannan, Ramchandran and Bartlett, Peter},
  booktitle = 	 {Proceedings of the 35th International Conference on Machine Learning},
  pages = 	 {5650--5659},
  year = 	 {2018},
  volume = 	 {80},
}

@article{xie2018generalized,
  title={Generalized {B}yzantine-tolerant {SGD}},
  author={Xie, Cong and Koyejo, Oluwasanmi and Gupta, Indranil},
  journal={arXiv preprint arXiv:1802.10116},
  year={2018}
}

@article{pillutla2022robust,
  title={Robust aggregation for federated learning},
  author={Pillutla, Krishna and Kakade, Sham M and Harchaoui, Zaid},
  journal={IEEE Transactions on Signal Processing},
  volume={70},
  pages={1142--1154},
  year={2022},
  publisher={IEEE}
}

@article{wu2020federated,
  title={Federated variance-reduced stochastic gradient descent with robustness to {B}yzantine attacks},
  author={Wu, Zhaoxian and Ling, Qing and Chen, Tianyi and Giannakis, Georgios B},
  journal={IEEE Transactions on Signal Processing},
  volume={68},
  pages={4583--4596},
  year={2020},
  publisher={IEEE}
}

@InProceedings{karimireddy2021learning,
  title = 	 {Learning from History for {B}yzantine Robust Optimization},
  author =       {Karimireddy, Sai Praneeth and He, Lie and Jaggi, Martin},
  booktitle = 	 {Proceedings of the 38th International Conference on Machine Learning},
  pages = 	 {5311--5319},
  year = 	 {2021},
  volume = 	 {139},
}

@inproceedings{li2019rsa,
  title={{RSA}: {B}yzantine-robust stochastic aggregation methods for distributed learning from heterogeneous datasets},
  author={Li, Liping and Xu, Wei and Chen, Tianyi and Giannakis, Georgios B and Ling, Qing},
  booktitle={Proceedings of the AAAI Conference on Artificial Intelligence},
  volume={33},
  pages={1544--1551},
  year={2019}
}

@article{kairouz2021advances,
  title={Advances and open problems in federated learning},
  author={Kairouz, Peter and McMahan, H Brendan and Avent, Brendan and Bellet, Aur{\'e}lien and Bennis, Mehdi and Bhagoji, Arjun Nitin and Bonawitz, Kallista and Charles, Zachary and Cormode, Graham and Cummings, Rachel and others},
  journal={Foundations and Trends in Machine Learning},
  volume={14},
  number={1--2},
  pages={1--210},
  year={2021},
  publisher={Now Publishers, Inc.}
}

@article{bottou2018optimization,
  title={Optimization methods for large-scale machine learning},
  author={Bottou, L{\'e}on and Curtis, Frank E and Nocedal, Jorge},
  journal={SIAM Review},
  volume={60},
  number={2},
  pages={223--311},
  year={2018},
  publisher={SIAM}
}

@book{lan2020first,
  title={First-order and Stochastic Optimization Methods for Machine Learning},
  author={Lan, Guanghui},
  year={2020},
  publisher={Springer}
}

@inproceedings{woodworth2020minibatch,
 author = {Woodworth, Blake E and Patel, Kumar Kshitij and Srebro, Nati},
 booktitle = {Advances in Neural Information Processing Systems},
 pages = {6281--6292},
 title = {Minibatch vs Local {SGD} for Heterogeneous Distributed Learning},
 volume = {33},
 year = {2020}
}

@InProceedings{karimireddy2020scaffold,
  title = 	 {{SCAFFOLD}: Stochastic Controlled Averaging for Federated Learning},
  author =       {Karimireddy, Sai Praneeth and Kale, Satyen and Mohri, Mehryar and Reddi, Sashank and Stich, Sebastian and Suresh, Ananda Theertha},
  booktitle = 	 {Proceedings of the 37th International Conference on Machine Learning},
  pages = 	 {5132--5143},
  year = 	 {2020},
  volume = 	 {119},
}

@article{zhang2021fedpd,
  title={{FedPD}: A federated learning framework with adaptivity to non-{IID} data},
  author={Zhang, Xinwei and Hong, Mingyi and Dhople, Sairaj and Yin, Wotao and Liu, Yang},
  journal={IEEE Transactions on Signal Processing},
  volume={69},
  pages={6055--6070},
  year={2021},
  publisher={IEEE}
}

@ARTICLE{wang22wireless,
  author={Wang, Yanmeng and Xu, Yanqing and Shi, Qingjiang and Chang, Tsung-Hui},
  journal={IEEE Journal on Selected Areas in Communications}, 
  title={Quantized Federated Learning Under Transmission Delay and Outage Constraints}, 
  year={2022},
  volume={40},
  number={1},
  pages={323-341},
}

@book{guerraoui2024robust,
  title={Robust Machine Learning: Distributed Methods for Safe AI},
  author={Guerraoui, Rachid and Gupta, Nirupam and Pinot, Rafael},
  year={2024},
  publisher={Springer}
}

@inproceedings{zakerinia2024communication,
  title = 	 {Communication-{e}fficient {f}ederated {l}earning With {d}ata and {c}lient {h}eterogeneity},
  author =       {Zakerinia, Hossein and Talaei, Shayan and Nadiradze, Giorgi and Alistarh, Dan},
  booktitle = 	 {Proceedings of The 27th International Conference on Artificial Intelligence and Statistics},
  pages = 	 {3448--3456},
  year = 	 {2024},
  volume = 	 {238},
}

@inproceedings{gorbunov2023variance,
  title={Variance Reduction is an Antidote to {B}yzantines: Better Rates, Weaker Assumptions and Communication Compression as a Cherry on the Top},
  author={Gorbunov, Eduard and Horv{\'a}th, Samuel and Richt{\'a}rik, Peter and Gidel, Gauthier},
  booktitle={International Conference on Learning Representations},
  year = {2023}
}

@inproceedings{cheng2024momentum,
  title={Momentum Benefits Non-{IID} Federated Learning Simply and Provably},
  author={Cheng, Ziheng and Huang, Xinmeng and Wu, Pengfei and Yuan, Kun},
  booktitle={International Conference on Learning Representations},
  year = {2024}
}

@inproceedings{karimireddybyzantine,
  title={{B}yzantine-Robust Learning on Heterogeneous Datasets via Bucketing},
  author={Karimireddy, Sai Praneeth and He, Lie and Jaggi, Martin},
  booktitle={International Conference on Learning Representations},
  year = {2022}
}

@inproceedings{allouah2025adaptive,
  title={Adaptive Gradient Clipping for Robust Federated Learning},
  author={Allouah, Youssef and Guerraoui, Rachid and Gupta, Nirupam and Jellouli, Ahmed and Rizk, Geovani and Stephan, John},
  booktitle={International Conference on Learning Representations},
  year = {2025}
}

@inproceedings{yang2025tension,
  title={On the Tension between {B}yzantine Robustness and No-Attack Accuracy in Distributed Learning},
  author={Yang, Yi-Rui and Shi, Chang-Wei and Li, Wu-Jun},
  booktitle={Proceedings of the 42nd International Conference on Machine Learning},
  year = {2025},
  volume = {267},
  pages = {71051-71072}
}

@article{nguyen2019new,
  title={New convergence aspects of stochastic gradient algorithms},
  author={Nguyen, Lam M and Nguyen, Phuong Ha and Richt{\'a}rik, Peter and Scheinberg, Katya and Tak{\'a}{\v{c}}, Martin and van Dijk, Marten},
  journal={Journal of Machine Learning Research},
  volume={20},
  number={176},
  pages={1--49},
  year={2019}
}

@book{nesterov2018lectures,
  title={Lectures on Convex Optimization},
  author={Nesterov, Yurii},
  volume={137},
  year={2018},
  publisher={Springer}
}

@InProceedings{gorbunov2020unified,
  title = 	 {A Unified Theory of {SGD}: Variance Reduction, Sampling, Quantization and Coordinate Descent},
  author =       {Gorbunov, Eduard and Hanzely, Filip and Richt{\'a}rik, Peter},
  booktitle = 	 {Proceedings of the 23rd International Conference on Artificial Intelligence and Statistics},
  pages = 	 {680--690},
  year = 	 {2020},
  volume = 	 {108},
}

@article{konevcny2016federated,
  title={Federated optimization: Distributed machine learning for on-device intelligence},
  author={Kone{\v{c}}n{\`y}, Jakub and McMahan, H Brendan and Ramage, Daniel and Richt{\'a}rik, Peter},
  journal={arXiv preprint arXiv:1610.02527},
  year={2016}
}

@inproceedings{alistarh2018byzantine,
 author = {Alistarh, Dan and Allen-Zhu, Zeyuan and Li, Jerry},
 booktitle = {Advances in Neural Information Processing Systems},
 pages = {},
 title = {{B}yzantine Stochastic Gradient Descent},
 volume = {31},
 year = {2018}
}

@article{nemirovski2009robust,
  title={Robust stochastic approximation approach to stochastic programming},
  author={Nemirovski, Arkadi and Juditsky, Anatoli and Lan, Guanghui and Shapiro, Alexander},
  journal={SIAM Journal on Optimization},
  volume={19},
  number={4},
  pages={1574--1609},
  year={2009},
  publisher={SIAM}
}

@inproceedings{li2020federated,
 author = {Li, Tian and Sahu, Anit Kumar and Zaheer, Manzil and Sanjabi, Maziar and Talwalkar, Ameet and Smith, Virginia},
 booktitle = {Proceedings of Machine Learning and Systems},
 pages = {429--450},
 title = {Federated Optimization in Heterogeneous Networks},
 volume = {2},
 year = {2020}
}

@InProceedings{noble2022differentially,
  title = 	 {Differentially Private Federated Learning on Heterogeneous Data },
  author =       {Noble, Maxence and Bellet, Aur\'elien and Dieuleveut, Aymeric},
  booktitle = 	 {Proceedings of The 25th International Conference on Artificial Intelligence and Statistics},
  pages = 	 {10110--10145},
  year = 	 {2022},
  volume = 	 {151},
}

@InProceedings{data2021byzantine,
  title = 	 {{B}yzantine-Resilient High-Dimensional {SGD} with Local Iterations on Heterogeneous Data},
  author =       {Data, Deepesh and Diggavi, Suhas},
  booktitle = 	 {Proceedings of the 38th International Conference on Machine Learning},
  pages = 	 {2478--2488},
  year = 	 {2021},
  volume = 	 {139},
}

@article{zhao2018federated,
  title={Federated learning with non-{IID} data},
  author={Zhao, Yue and Li, Meng and Lai, Liangzhen and Suda, Naveen and Civin, Damon and Chandra, Vikas},
  journal={arXiv preprint arXiv:1806.00582},
  year={2018}
}

@ARTICLE{mnist,
  author={Lecun, Y. and Bottou, L. and Bengio, Y. and Haffner, P.},
  journal={Proceedings of the IEEE}, 
  title={Gradient-based learning applied to document recognition}, 
  year={1998},
  volume={86},
  number={11},
  pages={2278-2324},
}

@INPROCEEDINGS{cifar,
  author={He, Kaiming and Zhang, Xiangyu and Ren, Shaoqing and Sun, Jian},
  booktitle={2016 IEEE Conference on Computer Vision and Pattern Recognition }, 
  title={Deep Residual Learning for Image Recognition}, 
  year={2016},
  volume={},
  number={},
  pages={770-778},
  }

@inproceedings{baruch2019little,
 author = {Baruch, Gilad and Baruch, Moran and Goldberg, Yoav},
 booktitle = {Advances in Neural Information Processing Systems},
 title = {A Little Is Enough: Circumventing Defenses For Distributed Learning},
 volume = {32},
 year = {2019}
}

@inproceedings{
zhu2021byzantine,
title={{B}yzantine-Resilient Non-Convex Stochastic Gradient Descent},
author={Zeyuan Allen-Zhu and Faeze Ebrahimianghazani and Jerry Li and Dan Alistarh},
booktitle={International Conference on Learning Representations},
year={2021},
}

@inproceedings{loshchilov2017sgdr,
  title={SGDR: Stochastic Gradient Descent with Warm Restarts},
  author={Loshchilov, Ilya and Hutter, Frank},
  booktitle={International Conference on Learning Representations},
  year={2017}
}

@InProceedings{allouah2023fixing,
  title = 	 {Fixing by Mixing: A Recipe for Optimal {B}yzantine {ML} under Heterogeneity},
  author =       {Allouah, Youssef and Farhadkhani, Sadegh and Guerraoui, Rachid and Gupta, Nirupam and Pinot, Rafael and Stephan, John},
  booktitle = 	 {Proceedings of The 26th International Conference on Artificial Intelligence and Statistics},
  pages = 	 {1232--1300},
  year = 	 {2023},
  volume = 	 {206}
}

@InProceedings{farhadkhani2022byzantine,
  title = 	 {{B}yzantine Machine Learning Made Easy By Resilient Averaging of Momentums},
  author =       {Farhadkhani, Sadegh and Guerraoui, Rachid and Gupta, Nirupam and Pinot, Rafael and Stephan, John},
  booktitle = 	 {Proceedings of the 39th International Conference on Machine Learning},
  pages = 	 {6246--6283},
  year = 	 {2022},
  volume = 	 {162},
}

@InProceedings{xie2019zeno,
  title = 	 {Zeno: Distributed Stochastic Gradient Descent with Suspicion-based Fault-tolerance},
  author =       {Xie, Cong and Koyejo, Sanmi and Gupta, Indranil},
  booktitle = 	 {Proceedings of the 36th International Conference on Machine Learning},
  pages = 	 {6893--6901},
  year = 	 {2019},
  volume = 	 {97},
  series = 	 {Proceedings of Machine Learning Research},
}

@inproceedings{Molodtsov2026bant, 
title={Bant: Byzantine Antidote via Trial Function and Trust Scores},
volume={40}, 
booktitle ={Proceedings of the AAAI Conference on Artificial Intelligence}, 
author={Molodtsov, Gleb and Medyakov, Daniil and Skorik, Sergey and Khachaturov, Nikolas and Tigranyan, Shahane and Aletov, Vladimir and Avetisyan, Aram and Takáč, Martin and Beznosikov, Aleksandr}, 
year={2026}, 
pages={24431-24440}}

\newpage
\appendix
\onecolumn
\section{Proof of the Convergence Results for R-DSGD}\label{Proof of the Convergence Result for R-DSGD}
    In this section, we establish the convergence of R-DSGD. To this end, we first derive several technical bounds on the stochastic gradients and the aggregation error defined in~\eqref{eq:notations:A}. These results allow us to obtain a descent lemma (Lemma~\ref{descent lemma for R-DSGD}) for the objective function, which in turn leads to the convergence results of R-DSGD in Appendix~\ref{sec:Proof of Convergence of R-DSGD}.
    \subsection{Technical Lemmas and Notations}
    We first introduce some notations. For any real value $a\in\rr$, denote $\lfloor a\rfloor$ as the largest integer that is smaller than $a$: $\lfloor a\rfloor\in\mathbb{Z},\; a-1<\lfloor a\rfloor\leq a$. We denote the history from rounds $0$ to $t-1$ by $\mathcal{P}_t$. Specifically, 
    \[\mathcal{P}_t=\left\{x^0,x^1,\ldots,x^{t-1}; m_i^0,m_i^1,\ldots,m_i^{t-1}; i=1,2,\ldots,n\right\}.\]
    We denote by $\ep_t[\cdot]$ the conditional expectation $\ep[\cdot|\mathcal{P}_t]$ and $\ep[\cdot]$ the total expectation, respectively. We also introduce the following notational conventions:
    \begin{equation}\label{eq:notations:A}
        \begin{aligned}
            \bar{g}_{\mh}^t:&=\frac{1}{h}\sum_{i\in\mh}g_i^t,\\
            e^t&:=\ma(g_1^t,g_2^t,\ldots,g_n^t)-\bar{g}_{\mh}^t.
        \end{aligned}
    \end{equation}
    
    Now we develop two technical lemmas. 
    \begin{lemma}\label{lem:average sg}
        Suppose that Assumptions \ref{heterogeneity} and \ref{sfo} hold. Then, for $\bar{g}_{\mh}^t$ defined in~\eqref{eq:notations:A}, we have 
        \begin{equation}
            \frac{1}{h}\sum_{i\in\mh}\ep_t\left[\norm{g_i^t-\bar{g}_{\mh}^t}^2\right]\leq \sigma^2+G^2+B^2\norm{\nabla f_{\mh}(x^{t-1})}^2.
        \end{equation}
    \end{lemma}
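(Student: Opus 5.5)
The plan is to decompose each deviation $g_i^t-\bar{g}_{\mh}^t$ into a noise part and a heterogeneity part, and bound each separately. Write $x=x^{t-1}$, which is $\mathcal{P}_t$-measurable, and set
\[\zeta_i:=g_i^t-\nabla f_i(x),\qquad \bar\zeta:=\frac1h\sum_{i\in\mh}\zeta_i .\]
Then
\[g_i^t-\bar{g}_{\mh}^t=(\zeta_i-\bar\zeta)+(\nabla f_i(x)-\nabla f_{\mh}(x)).\]
The second summand is deterministic given $\mathcal{P}_t$. The first has zero conditional mean by the unbiasedness in Assumption \ref{sfo}. So when we expand the square and take $\ep_t$, the cross term $2\innerproduct{\ep_t[\zeta_i-\bar\zeta]}{\nabla f_i(x)-\nabla f_{\mh}(x)}$ vanishes. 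This leaves
\[\frac1h\sum_{i\in\mh}\ep_t\norm{g_i^t-\bar{g}_{\mh}^t}^2=\frac1h\sum_{i\in\mh}\ep_t\norm{\zeta_i-\bar\zeta}^2+\frac1h\sum_{i\in\mh}\norm{\nabla f_i(x)-\nabla f_{\mh}(x)}^2.\]

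The heterogeneity term is bounded directly by Assumption \ref{heterogeneity}, which gives $G^2+B^2\norm{\nabla f_{\mh}(x^{t-1})}^2$.

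For the noise term, use the variance identity
\[\frac1h\sum_{i\in\mh}\norm{\zeta_i-\bar\zeta}^2=\frac1h\sum_{i\in\mh}\norm{\zeta_i}^2-\norm{\bar\zeta}^2\leq\frac1h\sum_{i\in\mh}\norm{\zeta_i}^2 .\]
Taking $\ep_t$ and applying the variance bound of Assumption \ref{sfo} to each worker makes this at most $\sigma^2$. This step does not even need independence across workers; independence would only sharpen the bound to $(1-1/h)\sigma^2$.

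Adding the two bounds gives the claim. The only point needing care is the conditioning: the cross term vanishes because $x^{t-1}$ is fixed given $\mathcal{P}_t$ and the fresh sample at round $t$ is unbiased conditionally on the history. Beyond that, the argument is routine.
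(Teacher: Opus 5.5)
Your proof is correct and is essentially the paper's argument. The paper first uses that $\bar{g}_{\mh}^t$ minimizes $y\mapsto\frac1h\sum_{i\in\mh}\norm{g_i^t-y}^2$ to replace it by $\nabla f_{\mh}(x^{t-1})$, then splits into noise plus heterogeneity with the cross term vanishing by unbiasedness; you split first and then apply the same variance identity to the noise, so in both orderings the only discarded quantity is $\norm{\bar{g}_{\mh}^t-\nabla f_{\mh}(x^{t-1})}^2$ and, as you note, independence across workers is never needed.
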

    \begin{proof}
        Since $\bar{g}_{\mh}^t$ is the average of all honest stochastic updates, it can be characterized as the unique solution of 
        \[\bar{g}_{\mh}^t=\argmin_{y\in\rr^d}\frac{1}{h}\sum_{i\in\mh}\norm{g_i^t-y}^2.\]
        Due to the optimality of $\bar{g}_{\mh}^t$, we have 
        \begin{align*}
            \frac{1}{h}\sum_{i\in\mh}\ep_t\left[\norm{g_i^t-\bar{g}_{\mh}^t}^2\right]&\leq \frac1h\sum_{i\in\mh}\ep_t\left[\norm{g_i^t-\nabla f_{\mh}(x^{t-1})}^2\right]\\
            &= \frac{1}{h}\ep_t\left[\sum_{i\in\mh}\norm{g_i^t-\nabla f_i(x^{t-1})+\nabla f_i(x^{t-1})-\nabla f_{\mh}(x^{t-1})}^2\right]\\
            & \mathop{\leq}^{\text{(i)}} \frac{1}{h }\sum_{i\in\mh}\left[\sigma^2+\norm{\nabla f_i(x^{t-1})-\nabla f_{\mh}(x^{t-1})}^2\right]\\
            & \mathop{\leq}^{\text{(ii)}} \sigma^2+G^2+B^2\norm{\nabla f_{\mh}(x^{t-1})}^2,
        \end{align*}
        where $(i)$ follows from Assumption \ref{sfo} that the stochastic gradients are unbiased with bounded variances, and $(b)$ uses the $(G,B)$-bounded dissimilarity assumption. The proof is thus completed.
    \end{proof}
    \begin{lemma}\label{lem:et:gH}
        Suppose that Assumptions \ref{smoothness}, \ref{heterogeneity}-\ref{aggregation} hold. Then, for $\bar{g}_{\mh}^t$ and $e^t$ defined in~\eqref{eq:notations:A}, we have 
        \begin{align}
            \ep_t\left[\norm{\bar{g}_{\mh}^t}^2\right]&\leq \frac{2\sigma^2}{h}+2\norm{\nabla f_{\mh}(x^{t-1})}^2, \label{bound for noisy gradient}\\
            \ep_t\left[\norm{e^t}^2\right]&\leq \kappa\sigma^2+\kappa \left(G^2+B^2\norm{\nabla f_{\mh}(x^{t-1})}^2\right).\label{bound for error}
        \end{align}
    \end{lemma}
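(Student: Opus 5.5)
For \eqref{bound for noisy gradient}, I will use the bias--variance decomposition of $\bar{g}_{\mh}^t$ around its conditional mean. Given $\mathcal{P}_t$, the point $x^{t-1}$ is fixed, and by Assumption~\ref{sfo} we have $\ep_t[\bar{g}_{\mh}^t]=\frac1h\sum_{i\in\mh}\nabla f_i(x^{t-1})=\nabla f_{\mh}(x^{t-1})$. Write
\[
\bar{g}_{\mh}^t=\frac1h\sum_{i\in\mh}\bigl(g_i^t-\nabla f_i(x^{t-1})\bigr)+\nabla f_{\mh}(x^{t-1}).
\]
Applying $\norm{a+b}^2\le 2\norm{a}^2+2\norm{b}^2$ gives
\[
\ep_t\bigl[\norm{\bar{g}_{\mh}^t}^2\bigr]\le 2\,\ep_t\Bigl[\bigl\|\tfrac1h\textstyle\sum_{i\in\mh}(g_i^t-\nabla f_i(x^{t-1}))\bigr\|^2\Bigr]+2\norm{\nabla f_{\mh}(x^{t-1})}^2.
\]
(The exact identity actually holds with constant $1$, since the cross term has zero conditional mean. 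The stated factor $2$ is therefore harmless.) The noise terms $g_i^t-\nabla f_i(x^{t-1})$ have zero conditional mean and are mutually independent across workers by Assumption~\ref{sfo}. The cross terms therefore vanish, and the first expectation equals $\frac{1}{h^2}\sum_{i\in\mh}\ep_t\norm{g_i^t-\nabla f_i(x^{t-1})}^2\le \sigma^2/h$. This yields $2\sigma^2/h+2\norm{\nabla f_{\mh}(x^{t-1})}^2$.

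For \eqref{bound for error}, I apply Assumption~\ref{aggregation} pointwise, i.e. for every realization of the vectors $g_1^t,\dots,g_n^t$. The Byzantine entries are arbitrary, and the aggregation rule only needs the honest subset $\mh$, so
\[
\norm{e^t}^2\le \frac{\kappa}{h}\sum_{i\in\mh}\norm{g_i^t-\bar{g}_{\mh}^t}^2 .
\]
Taking $\ep_t$ and invoking Lemma~\ref{lem:average sg} immediately gives $\kappa(\sigma^2+G^2+B^2\norm{\nabla f_{\mh}(x^{t-1})}^2)$.

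There is no real obstacle here. The only point needing care is measurability: the Byzantine vectors may depend on the honest gradients, yet the robustness inequality holds deterministically for any inputs, so taking conditional expectation afterwards is legitimate. The one place independence is used is killing the cross terms in the first bound.
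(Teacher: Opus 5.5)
Your proposal is correct and follows the paper's proof essentially step for step. For the first bound you split $\bar{g}_{\mh}^t$ into the averaged noise plus $\nabla f_{\mh}(x^{t-1})$, apply $\norm{a+b}^2\le 2\norm{a}^2+2\norm{b}^2$, and use cross-worker independence to get $\sigma^2/h$; for the second you apply $(b,\kappa)$-robustness pointwise, take $\ep_t$, and invoke Lemma~\ref{lem:average sg}. Your side remarks are also correct but not needed: the bias--variance identity would give constant $1$ instead of $2$, and applying robustness before taking expectations handles the Byzantine inputs' dependence on the honest gradients.
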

    \begin{proof}
        By Assumption \ref{sfo} that the stochastic gradients between workers are independent, $\ep_t[g_i^t]=\nabla f_i(x^{t-1})$, and the variance is bounded, we have
        \begin{align*}
            \ep_t\left[\norm{\bar{g}_{\mh}^t-\nabla f_{\mh}(x^{t-1})}^2\right]&=\ep_t\left[\norm{\frac{1}{h }\sum_{i\in\mh}(g_i^t-\nabla f_i(x^{t-1}))}^2\right]=\frac{1}{h ^2}\sum_{i\in\mh}\ep\left[\norm{g_i^t-\nabla f_i(x^{t-1})}^2\right] \leq \frac{\sigma^2}{h }.
        \end{align*}
        By applying AM-GM inequality and the inequality above, we obtain (\ref{bound for noisy gradient}) as
        \begin{align*}\ep_t\left[\norm{\bar{g}_{\mh}^t}^2\right]&=\ep_t\left[\norm{\frac{1}{h }\sum_{i\in\mh}[g_i^t-\nabla f_i(x^{t-1})]+\frac{1}{h }\sum_{i\in\mh}\nabla f_i(x^{t-1})}^2\right]\\
            &\leq 2\ep_t\left[\norm{\bar{g}_{\mh}^t-\nabla f_{\mh}(x^{t-1})}^2\right]+2\norm{\nabla f_{\mh}(x^{t-1})}^2\\
            &\leq \frac{2\sigma^2}{h }+2\norm{\nabla f_{\mh}(x^{t-1})}^2.
        \end{align*}
        
        Using Assumption~\ref{aggregation} that the aggregation rule $\ma$ is of $(b,\kappa)$-robustness and applying Lemma~\ref{lem:average sg}, we obtain
        \begin{align*}      \ep_t\left[\norm{e^t}^2\right]&=\ep_t\left[\norm{\ma(g_1^t,g_2^t,\ldots,g_n^t)-\bar{g}_{\mh}^t}^2\right]\\
        &\leq\frac{\kappa}{h }\sum_{i\in\mh}\ep_t\left[\norm{g_i^t-\bar{g}_\mh^t}^2\right]\\
        &\leq \kappa\sigma^2+\kappa(G^2+B^2\norm{\nabla f_{\mh}(x^{t-1})}^2).
        \end{align*}
         The proof of Lemma~\ref{lem:et:gH} is complete.
    \end{proof}

    The following descent lemma characterizes the one-step decrease of the objective function $f_{\mh}$.
    \begin{lemma}\label{descent lemma for R-DSGD}
        Suppose that Assumptions \ref{smoothness}, \ref{heterogeneity}-\ref{aggregation} hold. Then we have
        \begin{align*}
            \ep[f_{\mh}(x^{t})]&\leq\ep[f_{\mh}(x^{t-1})]+\gamma_{t}\left(-\frac12+2\gamma_{t} L+\kappa B^2\left(\frac{1}{2}+\gamma_{t} L \right)\right) \ep\left[\norm{\nabla f_{\mh}(x^{t-1})}^2\right]\\
            &\quad +\frac{2\gamma_{t} ^2L\sigma^2}{h }+\gamma_{t}\left(\frac{1}{2}+\gamma_{t} L \right) \left(\kappa\sigma^2+\kappa G^2\right).
        \end{align*}
    \end{lemma}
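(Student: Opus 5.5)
Since $\beta_t\equiv 0$, R-DSGD has the update $x^t=x^{t-1}-\gamma_t g^t$ with $g^t=\ma(g_1^t,\ldots,g_n^t)=\bar{g}_{\mh}^t+e^t$. The plan is to start from the $L$-smoothness inequality of Assumption~\ref{smoothness} applied at $y=x^t$, $x=x^{t-1}$:
\[
f_{\mh}(x^t)\leq f_{\mh}(x^{t-1})-\gamma_t\innerproduct{\nabla f_{\mh}(x^{t-1})}{\bar{g}_{\mh}^t+e^t}+\frac{L\gamma_t^2}{2}\norm{\bar{g}_{\mh}^t+e^t}^2 .
\]
We then take the conditional expectation $\ep_t$, handle the three pieces separately, and finally take the total expectation.

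\textbf{Step 1: the $\bar{g}_{\mh}^t$ part of the inner product.} By unbiasedness (Assumption~\ref{sfo}), $\ep_t[\bar{g}_{\mh}^t]=\nabla f_{\mh}(x^{t-1})$. Hence $-\gamma_t\ep_t\innerproduct{\nabla f_{\mh}(x^{t-1})}{\bar{g}_{\mh}^t}=-\gamma_t\norm{\nabla f_{\mh}(x^{t-1})}^2$.

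\textbf{Step 2: the $e^t$ part of the inner product.} The error $e^t$ is correlated with the noise and need not have zero mean, so we use Young's inequality:
\[
-\gamma_t\innerproduct{\nabla f_{\mh}(x^{t-1})}{e^t}\leq \frac{\gamma_t}{2}\norm{\nabla f_{\mh}(x^{t-1})}^2+\frac{\gamma_t}{2}\norm{e^t}^2 .
\]
Combined with Step 1, this leaves the coefficient $-\frac{\gamma_t}{2}$ on $\norm{\nabla f_{\mh}}^2$ and a term $\frac{\gamma_t}{2}\ep_t\norm{e^t}^2$.

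\textbf{Step 3: the quadratic term.} We bound $\norm{\bar{g}_{\mh}^t+e^t}^2\leq 2\norm{\bar{g}_{\mh}^t}^2+2\norm{e^t}^2$, so it contributes $L\gamma_t^2\ep_t\norm{\bar{g}_{\mh}^t}^2+L\gamma_t^2\ep_t\norm{e^t}^2$. Applying \eqref{bound for noisy gradient} gives
\[
L\gamma_t^2\ep_t\norm{\bar{g}_{\mh}^t}^2\leq \frac{2\gamma_t^2L\sigma^2}{h}+2\gamma_t^2L\norm{\nabla f_{\mh}(x^{t-1})}^2 ,
\]
which produces the $2\gamma_t L$ contribution to the coefficient and the $\frac{2\gamma_t^2 L\sigma^2}{h}$ term.

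\textbf{Step 4: collecting the $e^t$ terms.} All $e^t$ terms add up to $\gamma_t(\frac12+\gamma_t L)\ep_t\norm{e^t}^2$. We bound this with \eqref{bound for error}:
\[
\ep_t\norm{e^t}^2\leq \kappa\sigma^2+\kappa G^2+\kappa B^2\norm{\nabla f_{\mh}(x^{t-1})}^2 .
\]
The $B^2$ piece joins the gradient coefficient as $\gamma_t\kappa B^2(\frac12+\gamma_t L)$, and the remaining piece gives $\gamma_t(\frac12+\gamma_tL)(\kappa\sigma^2+\kappa G^2)$. Taking the total expectation then yields exactly the stated inequality.

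There is no real obstacle here. The only point needing care is Step 2: $e^t$ depends on the realized noise, so we must not assume $\ep_t[e^t]=0$. That is why Young's inequality with weight $\frac12$ is used, and it is precisely what yields the constants $\frac12$ in the statement.
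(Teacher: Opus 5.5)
Your proposal is correct and follows the paper's proof essentially step for step: the smoothness inequality, unbiasedness of $\bar{g}_{\mh}^t$, Young's inequality with weight $\tfrac12$ on the $e^t$ cross term, $\norm{a+b}^2\leq 2\norm{a}^2+2\norm{b}^2$ on the quadratic term, and then the two bounds of Lemma~\ref{lem:et:gH}. The only cosmetic difference is that you apply Young's inequality pointwise before taking $\ep_t$. The paper applies it to $\ep_t[e^t]$ and implicitly uses $\norm{\ep_t[e^t]}^2\leq\ep_t\norm{e^t}^2$, and the two routes give identical constants.
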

    \begin{proof}
        According to the $L$-smoothness and the iteration rule $x^t-x^{t-1}=-\gamma_{t}\ma(g_1^t,g_2^t,\ldots,g_n^t)=-\gamma_{t}[e^t+\bar{g}_{\mh}^t]$, we have
        \begin{align*}
            &\ep_t[f_\mh(x^{t})]\\
            &\leq f_{\mh}(x^{t-1})+\innerproduct{\nabla f_{\mh}(x^{t-1})}{\ep_t[x^{t}-x^{t-1}]}+\frac{L}{2}\ep_t\left[\norm{x^{t}-x^{t-1}}^2\right]\\
            &=f_{\mh}(x^{t-1})+\innerproduct{\nabla f_{\mh}(x^{t-1})}{-\gamma_{t} \ep_t[e^t]-\gamma_{t} \ep_t[\bar{g}^t_{\mh}]}+\frac{\gamma_{t} ^2L}{2}\ep_t\left[\norm{e^t+\bar{g}_{\mh}^t}^2\right]\\
            &\mathop{\leq}^{\text{(i)}} f_{\mh}(x^{t-1})-\gamma_{t}\innerproduct{\nabla f_{\mh}(x^{t-1})}{\ep_t[e^t]}-\gamma_{t} \norm{\nabla f_{\mh}(x^{t-1})}^2+\gamma_{t}^2L\ep_t\left[\norm{e^t}^2\right]+\gamma_{t} ^2L\ep_t\left[\norm{\bar{g}_{\mh}^t}^2\right]\\
            &\mathop{\leq}^{\text{(ii)}}f_{\mh}(x^{t-1})+\frac{\gamma_{t} }{2}\norm{\nabla f_{\mh}(x^{t-1})}^2+\frac{\gamma_{t} }{2}\ep_t\left[\norm{e^t}^2\right]-\gamma_{t} \norm{\nabla f_{\mh}(x^{t-1})}^2+\gamma_{t} ^2L\ep_t\left[\norm{e^t}^2\right]+\gamma_{t} ^2L\ep_t\left[\norm{\bar{g}_{\mh}^t}^2\right]\\
            &=f_{\mh}(x^{t-1})-\frac{\gamma_{t}}{2}\norm{\nabla f_{\mh}(x^{t-1})}^2+\left(\frac{\gamma_{t} }{2}+\gamma_{t} ^2L\right)\ep_t\left[\norm{e^t}^2\right]+\gamma_{t} ^2L\ep_t\left[\norm{\bar{g}_{\mh}^t}^2\right],
        \end{align*}
        where (i) follows from that $\norm{a+b}^2\leq 2\norm{a}^2+2\norm{b}^2$ and $\ep_t[\bar{g}_{\mh}^t]=\nabla f_{\mh}(x^{t-1})$, and (ii) follows from that $\innerproduct{a}{b}\leq \frac{\norm{b}^2}{2}+\frac{\norm{a}^2}{2}$. 

        Now using Lemma~\ref{lem:et:gH} for the last two terms $\ep\left[\norm{e^t}^2\right]$ $\ep \left[\norm{\bar{g}_{\mh}^t}^2\right]$ and taking the total expectation , we further obtain 
        \begin{align*}
            \ep[f_{\mh}(x^{t})]&\leq\ep\left[f_{\mh}(x^{t-1})\right]-\frac{\gamma_{t} }{2}\ep\left[\norm{\nabla f_{\mh}(x^{t-1})}^2\right]+\left(\frac{\gamma_{t}}{2}+\gamma_{t}^2 L\right)\left(\kappa\sigma^2+\kappa G^2+\kappa B^2\ep\left[\norm{\nabla f_{\mh}(x^{t-1})}^2\right]\right)\\
            &\quad + \gamma_{t}^2 L\left(\frac{2\sigma^2}{h}+2\ep\left[\norm{\nabla f_{\mh}(x^{t-1})}^2\right]\right)\\
            &=\ep[f_{\mh}(x^{t-1})]+\left(-\frac{\gamma_{t}}{2} +2\gamma_{t} ^2L+\kappa B^2\left(\frac{\gamma_{t} }{2}+\gamma_{t} ^2L\right)\right)\ep\left[\norm{\nabla f_{\mh}(x^{t-1})}^2\right]\\
            &\quad +\frac{2\gamma_{t} ^2L\sigma^2}{h }+\left(\frac{\gamma_{t} }{2}+\gamma_{t} ^2L\right)\left(\kappa\sigma^2+\kappa G^2\right),
        \end{align*}
        as claimed. 
    \end{proof}
    The next lemma is the key tool for proving results for global objective functions satisfying the PL condition in Assumption \ref{pl}, and is slightly modified from Lemma 3.1 in the work of \citet{guerraoui2024robust}. 
    \begin{lemma}\label{lemma for PL}
        Let $\alpha_1,\alpha_2,\alpha_3$ be real values such that $\alpha_1>0$, $\alpha_2\geq 0$ and $\alpha_3\geq 0$, and let $T>2$. Consider two positive real-valued sequences $\left\lbrace \gamma_t\right\rbrace_{t=1}^T$ and $\left\lbrace U_t\right\rbrace_{t=1}^T$ such that for all $t\in [T]$, 
        \[\gamma_t =\begin{cases}
            \gamma_0, & \mathrm{if}\;t<\left\lfloor T/2\right\rfloor,\\
            \frac{2}{\alpha_1\left(s_0+t-\left\lfloor T/2\right\rfloor\right)}, &\mathrm{if}\;t\geq \left\lfloor T/2\right\rfloor,
        \end{cases}\; U_{t+1}\leq (1-\alpha_1\gamma_t)U_t+\alpha_2\gamma_t ^2+\alpha_3\gamma_t ,\]
        where $\gamma_0$ and $s_0$ are positive real values such that $s_0>2$ and $\gamma_0=\frac{2}{\alpha_1s_0}<\frac{1}{\alpha_1}$. Then 
        \begin{align*}
            U_{T}\leq(s_0-1)^2U_1\frac{4\exp\left(-\alpha_1\gamma_0(\left\lfloor T/2\right\rfloor-1)\right)}{T^2}+\frac{4(\alpha_2\gamma_0+\alpha_3)(s_0-1)^2}{\alpha_1 T^2}+\frac{8\alpha_2}{\alpha_1^2 T}+\frac{2\alpha_3}{\alpha_1}.
        \end{align*}
    \end{lemma}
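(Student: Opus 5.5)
The argument splits the horizon at $\tau:=\lfloor T/2\rfloor$. First I unroll the recursion over the constant-stepsize phase $t<\tau$. Then I use the standard weighted telescoping trick for the $\mathcal{O}(1/t)$ phase $t\geq\tau$. Finally I combine the two, using $\tau\approx T/2$ to turn the phase-two weights into the $T^{-2}$ and $T^{-1}$ factors.

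\emph{Phase 1 ($1\le t<\tau$).} Here $\gamma_t=\gamma_0$, and $0<\alpha_1\gamma_0<1$. Iterating $U_{t+1}\le(1-\alpha_1\gamma_0)U_t+\alpha_2\gamma_0^2+\alpha_3\gamma_0$ from $t=1$ to $\tau-1$ and summing the geometric series gives
\[
U_\tau\le(1-\alpha_1\gamma_0)^{\tau-1}U_1+\frac{\alpha_2\gamma_0^2+\alpha_3\gamma_0}{\alpha_1\gamma_0}
\le \exp\bigl(-\alpha_1\gamma_0(\tau-1)\bigr)U_1+\frac{\alpha_2\gamma_0+\alpha_3}{\alpha_1}.
\]
The second inequality uses $1-x\le e^{-x}$.

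\emph{Phase 2 ($\tau\le t\le T-1$).} Write $k=t-\tau\ge0$, so that $\gamma_t=\frac{2}{\alpha_1(s_0+k)}$ and $1-\alpha_1\gamma_t=\frac{s_0+k-2}{s_0+k}$. At $k=0$ this stepsize equals $\gamma_0$, so the two phases glue consistently. Multiplying the recursion by $(s_0+k)(s_0+k-1)$ gives
\[
(s_0+k)(s_0+k-1)U_{t+1}\le(s_0+k-1)(s_0+k-2)U_t+(s_0+k)(s_0+k-1)\bigl(\alpha_2\gamma_t^2+\alpha_3\gamma_t\bigr).
\]
Setting $W_k:=(s_0+k-1)(s_0+k-2)U_{\tau+k}$, this reads $W_{k+1}\le W_k+\frac{4\alpha_2}{\alpha_1^2}+\frac{2\alpha_3(s_0+k-1)}{\alpha_1}$. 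Here I used $(s_0+k)(s_0+k-1)\gamma_t^2\le 4/\alpha_1^2$ and $(s_0+k)(s_0+k-1)\gamma_t=2(s_0+k-1)/\alpha_1$. Telescoping from $k=0$ to $K:=T-1-\tau$ yields
\[
(s_0+K)(s_0+K-1)U_T\le(s_0-1)(s_0-2)U_\tau+\frac{4\alpha_2(K+1)}{\alpha_1^2}+\frac{2\alpha_3}{\alpha_1}\sum_{k=0}^{K}(s_0+k-1).
\]
The arithmetic sum is at most $\tfrac12(s_0+K)(s_0+K-1)$, up to a correction involving $(s_0-1)(s_0-2)$ that can be discarded or absorbed. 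After dividing through, this produces the $2\alpha_3/\alpha_1$ term.

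\emph{Combining.} I then plug in the Phase 1 bound for $U_\tau$ and use $(s_0-1)(s_0-2)\le(s_0-1)^2$. Since $s_0>2$ and $K\ge T/2-1$, we have $s_0+K-1\ge T/2$, hence $(s_0+K)(s_0+K-1)\ge T^2/4$. Together with $K+1\le T$, this converts the three contributions into $4(s_0-1)^2U_1e^{-\alpha_1\gamma_0(\tau-1)}/T^2$, $4(\alpha_2\gamma_0+\alpha_3)(s_0-1)^2/(\alpha_1T^2)$ and $8\alpha_2/(\alpha_1^2T)$ respectively.

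The main obstacle is the index bookkeeping at the junction $t=\tau$ and at the endpoint $t=T-1$. The delicate points are getting $s_0+K-1\ge T/2$ exactly right given the floor convention, and making sure the arithmetic-sum bound yields exactly $2\alpha_3/\alpha_1$ rather than a slightly larger constant. If the crude bound overshoots, I would fall back on the exact identity $\sum_{k=0}^{K}(s_0+k-1)=\tfrac12[(s_0+K)(s_0+K-1)-(s_0-1)(s_0-2)]$. The negative correction it contains is harmless.
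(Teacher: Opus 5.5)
You follow the paper's proof: split at $\tau=\lfloor T/2\rfloor$, unroll the constant-stepsize phase geometrically, and telescope a weighted recursion in the $1/t$ phase. The only difference is the weight. The paper multiplies by $(s_0+k)^2$ and uses $(s_0+k)(s_0+k-2)\le(s_0+k-1)^2$. Your weight $(s_0+k)(s_0+k-1)$ makes the $U_t$-coefficient exact. This is harmless and even sharpens the $\alpha_3$ term: by your exact identity the sum contributes at most $\frac{\alpha_3}{\alpha_1}(s_0+K)(s_0+K-1)$. After dividing you get $\alpha_3/\alpha_1\le 2\alpha_3/\alpha_1$, so your worry about overshooting there is unfounded.

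The one step that fails as written is the $\alpha_2$ term. If you bound numerator and denominator separately via $K+1\le T$ and $(s_0+K)(s_0+K-1)\ge T^2/4$, you get
\[
\frac{4\alpha_2(K+1)}{\alpha_1^2(s_0+K)(s_0+K-1)}\le\frac{16\alpha_2}{\alpha_1^2T},
\]
which is twice the claimed constant. The only other $\alpha_2$-dependent term is $\mathcal{O}(1/T^2)$, so nothing absorbs the excess (take $\alpha_3=0$ and $T$ large).

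Fix it by cancelling a factor instead. Since $s_0>2$, you have $K+1\le s_0+K-1$, so the term is at most $\frac{4\alpha_2}{\alpha_1^2(s_0+K)}$. Then $s_0+K>T/2+1$ gives $\frac{8\alpha_2}{\alpha_1^2T}$. This is what the paper does: its denominator is $(s_0+K)^2$, and it uses $s_0+K\ge K+1=T-\lfloor T/2\rfloor\ge T/2$.

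With that correction, the rest of your combining step yields exactly the stated bound. That step uses $(s_0-1)(s_0-2)\le(s_0-1)^2$ together with $(s_0+K)(s_0+K-1)\ge T^2/4$.
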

    \begin{proof}
        For any $t\in\{1,2,\ldots,\left\lfloor T/2\right\rfloor-1\}$, since $\gamma_t=\gamma_0$, we have 
        \[U_{t+1}\leq (1-\alpha\gamma_0)U_t+\alpha_2\gamma_0^2+\alpha_3\gamma_0.\]
        By unrolling this recursion, we obtain 
        \begin{align}
            U_{\left\lfloor T/2\right\rfloor}&\leq (1-\alpha_1\gamma_0)^{\left\lfloor T/2\right\rfloor-1}U_1+(\alpha_2\gamma_0^2+\alpha_3\gamma_0)\sum_{t=1}^{\left\lfloor T/2\right\rfloor-1}(1-\alpha_1\gamma_0)^{t-1}\notag\\
            &\leq (1-\alpha_1\gamma_0)^{\left\lfloor T/2\right\rfloor-1}U_1+\frac{\alpha_2\gamma_0^2+\alpha_3\gamma_0}{1-(1-\alpha_1\gamma_0)}\notag\\
            &=(1-\alpha_1\gamma_0)^{\left\lfloor T/2\right\rfloor-1}U_1+\frac{\alpha_2\gamma_0+\alpha_3}{\alpha_1}\notag\\
            &\overset{\text{(i)}}{\leq} U_1\exp\left(-\alpha_1\gamma_0(\left\lfloor T/2\right\rfloor-1)\right)+\frac{\alpha_2\gamma_0+\alpha_3}{\alpha_1},\label{intermediate for pl lemma}
        \end{align}
        where in $(i)$ we use $(1-a)^b\leq \exp(-ab)$ for $a<1$, $b>0$. 

        For any $t\in\{\left\lfloor T/2\right\rfloor,\left\lfloor T/2\right\rfloor+1, \ldots, T\}$, we have $\gamma_t=\frac{2}{\alpha_1(s_0+t-\left\lfloor T/2\right\rfloor)}\leq\gamma_0$. Let $s=s_0-\left\lfloor T/2\right\rfloor$, and hence $\gamma_t=\frac{2}{\alpha_1(s+t)}$. Now the recursion of $U^t$ is
        \begin{align*}
            U_{t+1}&\leq (1-\alpha_1\gamma_t)U_t+\alpha_2\gamma_t^2+\alpha_3\gamma_t\\
            &=\left(1-\frac{2}{s+t}\right)U_t+\frac{4\alpha_2}{\alpha_1^2(s+t)^2}+\frac{2\alpha_3}{\alpha_1(s+t)}.
        \end{align*} 
        By multiplying each side with $(s+t)^2$ and noting that $(s+t)^2-2(s+t)\leq (s+t-1)^2$, we obtain   
        \begin{align*}
            (s+t)^2U_{t+1}&\leq \left((s+t)^2-2(s+t)\right)U_t+\frac{4\alpha_2}{\alpha_1^2}+\frac{2\alpha_3}{\alpha_1}(s+t)\\
            &\leq (s+t-1)^2U_t+\frac{4\alpha_2}{\alpha_1^2}+\frac{2\alpha_3}{\alpha_1}(s+t).
        \end{align*}
        Unrolling this recursion, we obtain
        \begin{align*}
            (s+T-1)^2U_{T}&\leq \left(s+\left\lfloor T/2\right\rfloor-1\right)^2U_{\left\lfloor T/2\right\rfloor}+\frac{4\alpha_2}{\alpha_1^2}(T-\left\lfloor T/2\right\rfloor)+\frac{2\alpha_3}{\alpha_1}\sum_{t=\left\lfloor T/2\right\rfloor}^{T-1}(s+t)\\
            &\leq \left(s+\left\lfloor T/2\right\rfloor-1\right)^2U_{\left\lfloor T/2\right\rfloor}+\frac{4\alpha_2}{\alpha_1^2}(T-\left\lfloor T/2\right\rfloor)+\frac{2\alpha_3}{\alpha_1}(s+T-1)(T-\left\lfloor T/2\right\rfloor).
        \end{align*}
        Dividing both sides by $(s+T-1)^2$, and recalling that $s=s_0-\left\lfloor T/2\right\rfloor$, we have 
        \[U_{T}\leq\frac{(s_0-1)^2}{(s_0+T-1-\left\lfloor T/2\right\rfloor)^2}U_{\left\lfloor T/2\right\rfloor}+\frac{4\alpha_2(T-\left\lfloor T/2\right\rfloor)}{\alpha_1^2(s_0+T-1-\left\lfloor T/2\right\rfloor)^2}+\frac{2\alpha_3}{\alpha_1}\frac{T-\left\lfloor T/2\right\rfloor}{s_0+T-1-\left\lfloor T/2\right\rfloor}.\]
        Noting that $\left\lfloor T/2\right\rfloor\leq T/2$ and $s_0>2$, we have $s_0+T-1-\lfloor T/2\rfloor\geq T-\lfloor T/2\rfloor\geq T/2$, and thus
        \[U_{T}\leq\frac{4(s_0-1)^2}{T^2}U_{\left\lfloor T/2\right\rfloor}+\frac{8\alpha_2}{\alpha_1^2T}+\frac{2\alpha_3}{\alpha_1}.\]

        The result follows by
        substituting $U_{\left\lfloor T/2\right\rfloor}$ in (\ref{intermediate for pl lemma}) into above.
    \end{proof}
    \subsection{Proof of Convergence of R-DSGD}\label{sec:Proof of Convergence of R-DSGD}
    Now we are ready to prove Theorem \ref{convergence for nonconvex R-DSGD}.
    \begin{proof}[Proof of Theorem \ref{convergence for nonconvex R-DSGD}]
        Recall that we use constant stepsizes $\gamma_t\equiv \gamma$. Noting that $\gamma L\leq\delta$, we have 
        \begin{align*}
            -\frac12+2\gamma L +\kappa B^2\left(\frac{1}{2}+\gamma L\right)\leq -\frac12+2\delta+\kappa B^2\left(\frac{1}{2}+\delta\right)=:-C_1.
        \end{align*}
        Since $\kappa B^2<\frac{1-4\delta}{1+2\delta}$, we have $C_1>0$. Hence,
        we obtain from Lemma \ref{descent lemma for R-DSGD} and $\gamma L\leq\delta$ that
        \[
            C_1\gamma\ep\left(\norm{\nabla f_{\mh}(x^{t})}^2\right)\leq \ep\left[f_{\mh}(x^{t})\right]-\ep\left[f_{\mh}(x^{t+1})\right]+\frac{2\gamma^2L\sigma^2}{h }+\left(\frac{1}{2}+\delta\right)\gamma\left[\kappa\sigma^2+\kappa G^2\right].
        \]
        Telescoping the above inequality from $t=0$ to $T-1$ and then dividing both sides by $C_1\gamma T$ gives
        \begin{align*}
            \frac1T\sum_{t=0}^{T-1}\ep\left[\norm{\nabla f_{\mh}(x^t)}^2\right]&\leq \frac{1}{T}\sum_{t=0}^{T-1}\frac{\ep\left[f_{\mh}(x^{t})-f_{\mh}(x^{t+1})\right]}{C_1\gamma T}+\frac{2\gamma L\sigma^2}{C_1h}+\frac{\frac{1}{2}+\delta}{C_1}\left(\kappa\sigma^2+\kappa G^2\right)\\
            &\leq \frac{f_{\mh}(x^0)-f_{\mh}^*}{C_1\gamma T}+\frac{2\gamma L\sigma^2}{C_1h}+C_2\left(\kappa\sigma^2+\kappa G^2\right),
        \end{align*}
        as desired.
    \end{proof}

    Due to space constraints, we present the formal statement of Theorem \ref{convergence for PL R-DSGD} below, followed by its proof. 
    \begin{theorem}\label{convergence for PL R-DSGD specific}
        Suppose that Assumptions \ref{smoothness}-\ref{aggregation} hold. For any $\delta<1/4$,
        denote 
        \begin{align*}
            \alpha_1&=2\mu\left(\frac12-2\delta-\kappa B^2\left(\frac{1}{2}+\delta\right)\right),\\
            \alpha_2&=L\left(\frac{2\sigma^2}{h }+\kappa\sigma^2+\kappa G^2\right),\\
            \alpha_3&=\frac12\left(\kappa\sigma^2+\kappa G^2\right).
        \end{align*}
        For any given  $T>0$, suppose that the stepsizes $\{\gamma_t\}$ are chosen as
        \[\gamma_t =\begin{cases}
            \gamma_0, & \mathrm{if}\;t< \left\lfloor T/2\right\rfloor,\\
            \frac{2}{\alpha_1\left(s_0+t-\left\lfloor T/2\right\rfloor\right)}, &\mathrm{if}\;t\geq \left\lfloor T/2\right\rfloor,
        \end{cases}\]
        with $s_0>\max\{\frac{2L}{\delta\alpha_1},2\}$ and $\gamma_0=\frac{2}{\alpha_1s_0}<\min\{\frac{1}{\alpha_1},\frac{\delta}{L}\}$, and assume that $\kappa B^2<\frac{1-4\delta}{1+2\delta}$.
        Then the iterates $\{x^{t}\}$ generated by R-DSGD satisfy
        \begin{align*}
            \ep[f_{\mh}(x^{T-1})]-f_{\mh}(x^*)&\leq (s_0-1)^2\left(f_{\mh}(x^0)-f_{\mh}^*\right)\frac{4\exp\left(-\alpha_1\gamma_0(\left\lfloor T/2\right\rfloor-1)\right)}{T^2}\\
            &+\frac{4(\alpha_2\gamma_0+\alpha_3)(s_0-1)^2}{\alpha_1T^2}+\frac{8\alpha_2}{\alpha_1^2T}+\frac{2\alpha_3}{\alpha_1}.
        \end{align*}
    \end{theorem}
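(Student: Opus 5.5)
The plan is to turn the descent inequality of Lemma~\ref{descent lemma for R-DSGD} into a scalar recursion for $U_t := \ep[f_{\mh}(x^{t-1})] - f_{\mh}^*$. That recursion will have exactly the form required by Lemma~\ref{lemma for PL}, which then gives the bound directly.

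First, I check that every stepsize is admissible for the descent lemma with constant $\delta$. The choice $s_0 > 2L/(\delta\alpha_1)$ gives $\gamma_0 = 2/(\alpha_1 s_0) < \delta/L$. For $t \ge \lfloor T/2\rfloor$ we have $\gamma_t \le \gamma_0$. Hence $\gamma_t L \le \delta$ for all $t$.

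With this, the coefficient in Lemma~\ref{descent lemma for R-DSGD} is bounded as
\[
-\tfrac12 + 2\gamma_t L + \kappa B^2\bigl(\tfrac12 + \gamma_t L\bigr) \le -\Bigl(\tfrac12 - 2\delta - \kappa B^2\bigl(\tfrac12+\delta\bigr)\Bigr) = -\tfrac{\alpha_1}{2\mu}.
\]
This quantity is strictly negative because $\kappa B^2 < (1-4\delta)/(1+2\delta)$. Since it is negative, I may lower bound $\ep\|\nabla f_{\mh}(x^{t-1})\|^2$ by $2\mu\,(\ep[f_{\mh}(x^{t-1})] - f_{\mh}^*)$ using the PL condition (Assumption~\ref{pl}), applied pointwise and then under expectation. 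Subtracting $f_{\mh}^*$ from both sides yields
\[
U_{t+1} \le (1 - \alpha_1\gamma_t)\,U_t + \gamma_t\bigl(\tfrac12 + \gamma_t L\bigr)(\kappa\sigma^2 + \kappa G^2) + \frac{2\gamma_t^2 L\sigma^2}{h}.
\]

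Next I regroup the additive terms into powers of $\gamma_t$. The $\gamma_t^2$ terms combine to $\gamma_t^2 L\bigl(2\sigma^2/h + \kappa\sigma^2 + \kappa G^2\bigr) = \alpha_2\gamma_t^2$. The $\gamma_t$ term is $\tfrac12(\kappa\sigma^2 + \kappa G^2)\gamma_t = \alpha_3\gamma_t$. So the recursion reads
\[
U_{t+1} \le (1 - \alpha_1\gamma_t)\,U_t + \alpha_2\gamma_t^2 + \alpha_3\gamma_t,
\]
with $\alpha_1 > 0$, $\alpha_2, \alpha_3 \ge 0$, and positive $U_t$. Any zero $U_t$ is harmless: the lemma's proof only uses the inequality and nonnegativity.

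The remaining hypotheses of Lemma~\ref{lemma for PL} hold by construction: the stepsizes follow the prescribed two-phase schedule, $s_0 > 2$, and $\gamma_0 = 2/(\alpha_1 s_0) < 1/\alpha_1$. I also need $1 - \alpha_1\gamma_t \ge 0$, which follows from $\gamma_t \le \gamma_0 < 1/\alpha_1$.

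Applying Lemma~\ref{lemma for PL} with $U_1 = f_{\mh}(x^0) - f_{\mh}^*$ gives a bound on $U_T = \ep[f_{\mh}(x^{T-1})] - f_{\mh}^*$, and this is exactly the claimed bound.

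The only delicate points are bookkeeping. One is the index alignment between $U_t$ and $x^{t-1}$, since the lemma is stated with step $t$ using $x^{t-1}$. The other is ensuring the PL substitution is applied only after confirming that the coefficient of the gradient norm is negative; otherwise the inequality would run the wrong way. I expect that sign check to be the main point requiring care, and it is secured by the assumption $\kappa B^2 < (1-4\delta)/(1+2\delta)$ together with $\gamma_t L \le \delta$.
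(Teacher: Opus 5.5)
Your proposal is correct and follows essentially the same route as the paper. Both arguments bound $\gamma_t L\le\delta$, combine the descent lemma (whose gradient coefficient is negative) with the PL inequality to get $U_{t+1}\le(1-\alpha_1\gamma_t)U_t+\alpha_2\gamma_t^2+\alpha_3\gamma_t$ for $U_t=\ep[f_{\mh}(x^{t-1})]-f_{\mh}^*$, and then invoke Lemma~\ref{lemma for PL} with $U_1=f_{\mh}(x^0)-f_{\mh}^*$.
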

    \begin{proof}[Proof of Theorem \ref{convergence for PL R-DSGD specific}]
        From the choices of stepsizes, we obtain $\gamma_t\leq\gamma_0\leq \frac{\delta}{L}$ and hence $\gamma_t L\leq \delta$ for any $t\geq 1$.
        It follows from  Lemma \ref{descent lemma for R-DSGD} and $\gamma_t L\leq \delta$ that
        \begin{align*}
            \ep[f_{\mh}(x^{t})]-f_{\mh}^*&\leq\ep[f_{\mh}(x^{t-1})]-f_{\mh}^*+\left(-\frac12+2\gamma_t L +\kappa B^2\left(\frac{1}{2}+\gamma_t L \right)\right)\gamma_t \ep\left[\norm{\nabla f_{\mh}(x^{t-1})}^2\right]\\
            &\quad +\frac{2\gamma_t^2L\sigma^2}{h }+\left(\frac{1}{2}+\gamma_t L \right)\gamma_t \left(\kappa\sigma^2+\kappa G^2\right)\\
            &\leq \ep[f_{\mh}(x^{t-1})]-f_{\mh}^*+\left(-\frac12+2\delta +\kappa B^2\left(\frac{1}{2}+\delta \right)\right)\gamma_t\ep\left[\norm{\nabla f_{\mh}(x^{t-1})}^2\right]\\
            &\quad +\frac{2\gamma_t ^2 L\sigma^2}{h }+\left(\frac{1}{2}+\gamma_t L\right)\gamma_t \left(\kappa\sigma^2+\kappa G^2\right).
        \end{align*}
        By the assumption $\kappa B^2<\frac{1-4\delta}{1+2\delta}$ and the PL condition, we have 
        \[\left(-\frac12+2\delta +\kappa B^2\left(\frac{1}{2}+\delta \right)\right)\ep\left[\norm{\nabla f_{\mh}(x^{t-1})}^2\right]\leq 2\mu\left(-\frac12+2\delta +\kappa B^2\left(\frac{1}{2}+\delta \right)\right)\left(f_{\mh}(x^{t-1})-f_{\mh}^*\right).\]
        Denote $U_t=\ep[f_{\mh}(x^{t-1})]-f^*$, and hence
        \begin{align*}
            U_{t+1}&\leq U_t+2\mu\gamma_t \left(-\frac12+2\delta+\kappa B^2\left(\frac{1}{2}+\delta\right)\right)U_t+\frac{2\gamma_t ^2 L\sigma^2}{h }+\left(\frac{1}{2}+\gamma_t L \right)\gamma_t \left(\kappa\sigma^2+\kappa G^2\right)\\
            &=\left(1-\alpha_1\gamma_t \right)U_t+\gamma_t^2L\left(\frac{2\sigma^2}{h}+\kappa\sigma^2+\kappa G^2\right)+\frac{\gamma_t}{2}\left(\kappa\sigma^2+\kappa G^2\right)\\
            &=(1-\alpha_1\gamma_t)U_t+\alpha_2\gamma_t^2+\alpha_3\gamma_t.
        \end{align*}
        The result follows by invoking Lemma \ref{lemma for PL}.
    \end{proof}
    \section{Proof of Convergence Results of R-DSGD-M} \label{Proof for Convergence Result of R-DSGD with Momentum}
    In this section, we prove the convergence of R-DSGD-M by adopting a Lyapunov analysis, which is a standard technique for analyzing momentum-based algorithms. To this end, we first establish the recursive properties for the individual components of the Lyapunov function in Appendix~\ref{sec:Technical Lemmas and Notations}. We then derive the recursion for the Lyapunov function in Appendix~\ref{sec:The Lyapunov Function}. Leveraging this recursive property, we are able to establish the convergence results in Appendix~\ref{sec:proof of R-DSGD-M smooth} and Appendix~\ref{sec:proof of R-DSGD-M PL}.
    \subsection{Technical Lemmas and Notations}\label{sec:Technical Lemmas and Notations}
    
    For convenience, we first introduce our notational conventions:
    \begin{equation}\label{eq:notations}
        \begin{aligned}
            m_i^t &= \beta_t m_i^{t-1}+(1-\beta_t)g_i^t;\\
            \bar{m}_{\mh}^t&:=\frac{1}{h}\sum_{i\in\mh} m_i^t;\\
            \bar{g}_{\mh}^t&:=\frac{1}{h}\sum_{i\in\mh} g_i^t;\\
            \Gamma_{\mh}^t&:=\frac{1}{h }\sum_{i\in\mh}\norm{m_i^t-\bar{m}_{\mh}^t}^2;\\
            \zeta^t &:=\ma(m_1^t,m_2^t,\ldots,m_n^t)-\bar{m}_{\mh}^t;\\
            \delta^t&:=\bar{m}_{\mh}^t-\nabla f_{\mh}(x^{t-1}).
        \end{aligned}
    \end{equation}
    As in Appendix \ref{Proof of the Convergence Result for R-DSGD}, we denote $\ep_t[\cdot]$ as the conditional expectation given the history of last $t-1$ iterates.
    
    In the next lemma, we investigate the recursive property of $\Gamma_{\mh}^t$ .
    \begin{lemma}\label{lem:moment}
        Under Assumptions \ref{heterogeneity} and \ref{sfo}, for all $t>1$, we have
        \begin{equation}
            \ep\left[\Gamma_{\mh}^t\right]\leq\beta_t\ep\left[\Gamma_{\mh}^{t-1}\right]+(1-\beta_t)\left(G^2+B^2\ep\left[\norm{\nabla f_{\mh}(x^{t-1})}^2\right]\right)+(1-\beta_t)^2\sigma^2.
        \end{equation}
    \end{lemma}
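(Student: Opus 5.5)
Write each honest momentum as a deviation from the honest average. Since $m_i^t=\beta_t m_i^{t-1}+(1-\beta_t)g_i^t$, averaging gives $\bar m_{\mh}^t=\beta_t\bar m_{\mh}^{t-1}+(1-\beta_t)\bar g_{\mh}^t$. Hence
\[
m_i^t-\bar m_{\mh}^t=\beta_t\bigl(m_i^{t-1}-\bar m_{\mh}^{t-1}\bigr)+(1-\beta_t)\bigl(g_i^t-\bar g_{\mh}^t\bigr).
\]
I then split the stochastic-gradient deviation into a deterministic heterogeneity part and a zero-mean noise part:
\[
g_i^t-\bar g_{\mh}^t=\bigl(\nabla f_i(x^{t-1})-\nabla f_{\mh}(x^{t-1})\bigr)+\bigl(\nu_i^t-\bar\nu^t\bigr),\qquad \nu_i^t:=g_i^t-\nabla f_i(x^{t-1}),\quad \bar\nu^t:=\frac1h\sum_{i\in\mh}\nu_i^t .
\]
Write $a_i:=\beta_t(m_i^{t-1}-\bar m_{\mh}^{t-1})+(1-\beta_t)(\nabla f_i(x^{t-1})-\nabla f_{\mh}(x^{t-1}))$. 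This quantity is $\mathcal{P}_t$-measurable, because $m_i^{t-1}$ and $x^{t-1}$ are in the history.

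Condition on $\mathcal{P}_t$. Since $\ep_t[\nu_i^t]=0$, the cross terms vanish and
\[
\ep_t\Bigl[\tfrac1h\sum_{i\in\mh}\norm{m_i^t-\bar m_{\mh}^t}^2\Bigr]=\tfrac1h\sum_{i\in\mh}\norm{a_i}^2+(1-\beta_t)^2\,\ep_t\Bigl[\tfrac1h\sum_{i\in\mh}\norm{\nu_i^t-\bar\nu^t}^2\Bigr].
\]
For the noise term, the centered sum of squares is at most the uncentered one, which is at most $\sigma^2$ by Assumption \ref{sfo}. This is the same argmin argument as in Lemma \ref{lem:average sg}.

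For the deterministic term I use convexity of $\norm{\cdot}^2$ with weights $\beta_t$ and $1-\beta_t$, i.e. $\norm{\beta_t u+(1-\beta_t)v}^2\le\beta_t\norm{u}^2+(1-\beta_t)\norm{v}^2$. Applied to each $a_i$ and averaged over $i\in\mh$, this gives
\[
\tfrac1h\sum_{i\in\mh}\norm{a_i}^2\le\beta_t\Gamma_{\mh}^{t-1}+(1-\beta_t)\,\tfrac1h\sum_{i\in\mh}\norm{\nabla f_i(x^{t-1})-\nabla f_{\mh}(x^{t-1})}^2 .
\]
Assumption \ref{heterogeneity} bounds the last average by $G^2+B^2\norm{\nabla f_{\mh}(x^{t-1})}^2$. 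Taking total expectation then yields the claimed recursion.

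The main step to get right is the convexity (Jensen) step. A cruder Young-type bound would give a factor larger than $\beta_t$ and break the contraction, so the exact convex-combination structure, which requires $\beta_t\in[0,1]$, has to be used. The only other point needing care is keeping the noise separate before applying the bound, so that it picks up the factor $(1-\beta_t)^2$ rather than $(1-\beta_t)$. Separating it first, and using its zero conditional mean given $\mathcal{P}_t$, achieves exactly this.
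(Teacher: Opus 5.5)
Your proof is correct and is essentially the paper's argument. Your convexity inequality $\norm{\beta_t u+(1-\beta_t)v}^2\le\beta_t\norm{u}^2+(1-\beta_t)\norm{v}^2$ is what the paper obtains by expanding the square, taking $\ep_t$ so that the cross term becomes $2\beta_t(1-\beta_t)\innerproduct{m_i^{t-1}-\bar m_{\mh}^{t-1}}{\nabla f_i(x^{t-1})-\nabla f_{\mh}(x^{t-1})}$, and applying $2\innerproduct{x}{y}\le\norm{x}^2+\norm{y}^2$. The only difference is cosmetic: the paper does not split off the noise first, but bounds $(1-\beta_t)^2\frac1h\sum_{i\in\mh}\ep_t\norm{g_i^t-\bar g_{\mh}^t}^2$ via Lemma~\ref{lem:average sg} and then combines $(1-\beta_t)^2+\beta_t(1-\beta_t)=1-\beta_t$ in the heterogeneity coefficient, reaching the same bound.
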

    \begin{proof}
        By the recursion rule of local momentum, we have 
        \[m_i^t-\bar{m}_{\mh}^t=\beta_t(m_i^{t-1}-\bar{m}_{\mh}^{t-1})+(1-\beta_t)(g_i^t-\bar{g}_{\mh}^{t}).\]
        Taking the squared norm on both sides and expanding, we obtain 
        \begin{align*}
            \norm{m_i^t-\bar{m}_{\mh}^t}^2 &=\beta_t^2\norm{m_i^{t-1}-\bar{m}_{\mh}^{t-1}}^2+(1-\beta_t)^2\norm{g_i^t-\bar{g}_{\mh}^{t}}^2+2\beta_t(1-\beta_t)\innerproduct{m_i^{t-1}-\bar{m}_{\mh}^{t-1}}{g_i^t-\bar{g}_{\mh}^{t}}.
        \end{align*}
        Taking the conditional expectation $\ep_t[\cdot]$ on both sides and noting that $\ep_t[g_i^t]=\nabla f_i(x^{t-1})$ by Assumption~\ref{sfo}, we have 
        \begin{align*}
            \ep_t\left[\norm{m_i^t-\bar{m}_{\mh}^t}^2\right] &=\beta_t^2\ep_t\left[\norm{m_i^{t-1}-\bar{m}_{\mh}^{t-1}}^2\right]+(1-\beta_t)^2\ep_t\left[\norm{g_i^t-\bar{g}_{\mh}^{t}}^2\right]\\
            &\quad+2\beta_t(1-\beta_t)\innerproduct{m_i^{t-1}-\bar{m}_{\mh}^{t-1}}{\nabla f_i(x^{t-1})-\nabla f_{\mh}(x^{t-1})}.
        \end{align*}
        Applying the inequality $2\innerproduct{x}{y}\leq\norm{x}^2+\norm{y}^2$, we obtain  
        \begin{align*}
            \ep_t\left[\norm{m_i^t-\bar{m}_{\mh}^t}^2\right] &\leq (\beta_t^2+\beta_t(1-\beta_t))\ep_t\left[\norm{m_i^{t-1}-\bar{m}_{\mh}^{t-1}}^2\right]+(1-\beta_t)^2\ep_t\left[\norm{g_i^t-\bar{g}_{\mh}^{t}}^2\right]\\
            &\quad+\beta_t(1-\beta_t)\norm{\nabla f_i(x^{t-1})-\nabla f_{\mh}(x^{t-1})}^2\\
            &=\beta_t\ep_t\left[\norm{m_i^{t-1}-\bar{m}_{\mh}^{t-1}}^2\right]+(1-\beta_t)^2\ep_t\left[\norm{g_i^t-\bar{g}_{\mh}^{t}}^2\right]\\
            &\quad+\beta_t(1-\beta_t)\norm{\nabla f_i(x^{t-1})-\nabla f_{\mh}(x^{t-1})}^2.
        \end{align*}
        Taking the average over $i\in\mh$ in the inequality above derives
        \begin{align*}
            \frac1h\sum_{i\in\mh}\ep_t\left[\norm{m_i^t-\bar{m}_{\mh}^t}^2\right]&\leq \frac{\beta_t}{h}\sum_{i\in\mh}\ep_t\left[\norm{m_i^{t-1}-\bar{m}_{\mh}^{t-1}}^2\right]+\frac{(1-\beta_t)^2}{h}\sum_{i\in\mh}\ep_t\left[\norm{g_i^t-\bar{g}_{\mh}^t}^2\right]\\
            &\quad+\frac{\beta_t(1-\beta_t)}{h}\sum_{i\in\mh}\norm{\nabla f_i(x^{t-1})-\nabla f_{\mh}(x^{t-1})}^2.
        \end{align*}
        In the inequality above, by using Assumption~\ref{heterogeneity} for the term $\frac1h\sum_{i\in\mh}\norm{\nabla f_i(x^{t-1})-\nabla f_{\mh}(x^{t-1})}^2$ and Lemma~\ref{lem:average sg} for the term $\frac1h\sum_{i\in\mh}\ep_t\left[\norm{g_i^t-\bar{g}_{\mh}^t}^2\right]$, we further obtain 
        \begin{align*}
            \frac1h\sum_{i\in\mh}\ep_t\left[\norm{m_i^t-\bar{m}_{\mh}^t}^2\right]&\leq \frac{\beta_t}{h}\sum_{i\in\mh}\ep_t\left[\norm{m_i^{t-1}-\bar{m}_{\mh}^{t-1}}^2\right]+(1-\beta_t)^2\left(\sigma^2+G^2+B^2\norm{\nabla f(x^{t-1})}^2\right)\\
            &\quad+\beta_t(1-\beta_t)\left(G^2+B^2\norm{\nabla f_{\mh}(x^{t-1})}^2\right)\\
            &= \frac{\beta_t}{h}\sum_{i\in\mh}\ep_t\left[\norm{m_i^{t-1}-\bar{m}_{\mh}^{t-1}}^2\right]+(1-\beta_t)^2\sigma^2\\
            &\quad+(1-\beta_t)\left(G^2+B^2\norm{\nabla f_{\mh}(x^{t-1})}^2\right).
        \end{align*}
        By taking the total expectation on both sides and recalling the definition of $\Gamma_{\mh}^t$, we finally obtain 
        \[\ep\left[\Gamma_{\mh}^t\right]\leq \beta_t\ep\left[\Gamma_{\mh}^{t-1}\right]+(1-\beta_t)\left(G^2+B^2\ep\left[\norm{\nabla f_{\mh}(x^{t-1})}^2\right]\right)+(1-\beta_t)^2\sigma^2.\]
        The proof is completed.
    \end{proof}
    The relationship between $\zeta^t$ and $\Gamma_{\mh}^t$ is provided in the next lemma.
    \begin{lemma}\label{lem:zeta}
        Suppose that Assumption \ref{aggregation} holds. Then, we have
        \begin{equation}
     \norm{\zeta^t}^2\leq\kappa\Gamma_{\mh}^t.
        \end{equation}
    \end{lemma}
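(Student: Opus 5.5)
This follows directly from Assumption~\ref{aggregation}, applied to the vectors that the server actually aggregates at round $t$. Take $x_i := m_i^t$ for every $i\in[n]$. For honest workers, $m_i^t$ is the local momentum produced by Algorithm~\ref{R-DSGD-M}. For Byzantine workers, $m_j^t$ is whatever arbitrary vector they send. Assumption~\ref{aggregation} must hold for \emph{any} vectors $x_1,\ldots,x_n\in\rr^d$ and any subset of cardinality $n-b$, so we may use the true honest set $\mh$ with $\abs{\mh}=h=n-b$. The Byzantine inputs are then covered automatically.

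With this choice, $\bar{x}_{\mh}=\frac1h\sum_{i\in\mh}m_i^t=\bar{m}_{\mh}^t$, by the notation in~\eqref{eq:notations}. The $(b,\kappa)$-robustness inequality then reads
\[
\norm{\ma(m_1^t,\ldots,m_n^t)-\bar{m}_{\mh}^t}^2\leq\frac{\kappa}{h}\sum_{i\in\mh}\norm{m_i^t-\bar{m}_{\mh}^t}^2 .
\]
The left-hand side is $\norm{\zeta^t}^2$ and the right-hand side is $\kappa\Gamma_{\mh}^t$, by definition. This is the claim, and it holds pathwise, i.e., for every realization of the randomness, with no expectation needed.

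There is no real obstacle. The only point to state explicitly is that the inequality is deterministic and holds for each realization of the stochastic gradients and each Byzantine strategy. That is what allows us to take $\ep[\cdot]$ on both sides afterwards and combine the result with Lemma~\ref{lem:moment}.
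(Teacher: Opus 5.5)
Your proposal is correct and follows the paper's proof: both instantiate the $(b,\kappa)$-robustness inequality with $x_i=m_i^t$ and the true honest set $\mh$, so that $\bar{x}_{\mh}=\bar{m}_{\mh}^t$ and the two sides become $\norm{\zeta^t}^2$ and $\kappa\Gamma_{\mh}^t$ by definition. Your remark that the bound holds for every realization, so that one can later take expectations and combine it with Lemma~\ref{lem:moment}, is also how the paper uses it.
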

    \begin{proof}
        The result holds directly from the $(b,\kappa)$-robustness of $\ma$ and the definitions in ~\eqref{eq:notations}:
        \[\norm{\zeta^t}^2=\norm{\ma(m_1^t,m_2^t,\ldots,m_n^t)-\bar{m}_{\mh}^t}^2\leq\frac{\kappa}{h}\sum_{i\in\mh}\norm{m_i^t-\bar{m}_{\mh}^t}^2=\kappa \Gamma_{\mh}^t.\]
    \end{proof}
    Next, we introduce two technical lemmas from the work of \citet{allouah2023fixing} for the recursive properties of $\norm{\delta^t}$ and $f_{\mh}(x^t)$. For completeness, we also provide the proofs here. 
    \begin{lemma}[Lemma 8 in \cite{allouah2023fixing}]\label{lem:delta}
        Suppose that Assumptions \ref{smoothness} and \ref{sfo} hold. With notations in (\ref{eq:notations}), we have for any $t\geq 1$ that 
        \begin{equation}
            \begin{aligned}
                \ep\left[\norm{\delta^{t+1}}^2\right]&\leq (1+\gamma_{t} L)(1+4\gamma_{t} L)\beta_{t+1}^2\ep\left[\norm{\delta^{t}}^2\right]+4\gamma_{t} L(1+\gamma_{t} L)\beta_{t+1}^2\ep\left[\norm{\nabla f_{\mh}(x^{t-1})}^2\right]\\
                &\quad+\frac{(1-\beta_{t+1})^2}{h}\sigma^2+2\gamma_{t} L(1+\gamma_{t} L)\beta_{t+1}^2\ep\left[\norm{\zeta^{t}}^2\right].
            \end{aligned}
        \end{equation}
    \end{lemma}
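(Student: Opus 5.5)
The proof is a direct one-step decomposition of $\delta^{t+1}$ followed by Young's inequality and smoothness. First, average the momentum recursion over $i\in\mh$ to get $\bar m_{\mh}^{t+1}=\beta_{t+1}\bar m_{\mh}^t+(1-\beta_{t+1})\bar g_{\mh}^{t+1}$. Then
\[
\delta^{t+1}=\beta_{t+1}\delta^t+\beta_{t+1}\bigl(\nabla f_{\mh}(x^{t-1})-\nabla f_{\mh}(x^{t})\bigr)+(1-\beta_{t+1})\bigl(\bar g_{\mh}^{t+1}-\nabla f_{\mh}(x^{t})\bigr).
\]
The first two terms are measurable with respect to $\mathcal{P}_{t+1}$. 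The last term has zero conditional mean given $\mathcal{P}_{t+1}$. By worker independence (as in the proof of Lemma~\ref{lem:et:gH}), its conditional second moment is at most $\sigma^2/h$.

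Taking $\ep_{t+1}$ of the squared norm therefore kills the cross term and gives
\[
\ep_{t+1}\norm{\delta^{t+1}}^2\le \beta_{t+1}^2\norm{\delta^t+\nabla f_{\mh}(x^{t-1})-\nabla f_{\mh}(x^t)}^2+\frac{(1-\beta_{t+1})^2\sigma^2}{h}.
\]
Apply Young's inequality with parameter $\gamma_tL$:
\[
\norm{a+b}^2\le(1+\gamma_tL)\norm{a}^2+\Bigl(1+\tfrac{1}{\gamma_tL}\Bigr)\norm{b}^2 .
\]
Bound the gradient difference by $L$-smoothness: $\norm{b}\le L\norm{x^t-x^{t-1}}=\gamma_tL\norm{\ma(m^t)}$. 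This yields the coefficient $(1+\tfrac{1}{\gamma_tL})\gamma_t^2L^2=\gamma_tL(1+\gamma_tL)$ on $\norm{\ma(m_1^t,\dots,m_n^t)}^2$.

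Next, write $\ma(m_1^t,\dots,m_n^t)=\zeta^t+\delta^t+\nabla f_{\mh}(x^{t-1})$. Use $\norm{u+v+w}^2\le 2\norm{u}^2+4\norm{v}^2+4\norm{w}^2$, which follows from two applications of $\norm{a+b}^2\le2\norm a^2+2\norm b^2$ with $\zeta^t$ split off first. Collecting terms, the coefficient of $\norm{\delta^t}^2$ is
\[
\beta_{t+1}^2\bigl[(1+\gamma_tL)+4\gamma_tL(1+\gamma_tL)\bigr]=\beta_{t+1}^2(1+\gamma_tL)(1+4\gamma_tL).
\]
The coefficient of $\norm{\nabla f_{\mh}(x^{t-1})}^2$ is $4\gamma_tL(1+\gamma_tL)\beta_{t+1}^2$, and the coefficient of $\norm{\zeta^t}^2$ is $2\gamma_tL(1+\gamma_tL)\beta_{t+1}^2$. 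Taking total expectation gives the claim.

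The only delicate point is bookkeeping. The split of $\ma(m^t)$ must put weight $2$ on $\zeta^t$ and weight $4$ on the other two pieces. We also need to check that $x^t$ is determined before the fresh samples $\xi_i^{t+1}$ are drawn, so that the cross term vanishes. It does, since $x^t$ depends only on $m^t$, which lies in $\mathcal{P}_{t+1}$.
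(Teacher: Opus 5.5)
Your proposal is correct and follows essentially the same route as the paper. Both use the same three-term decomposition of $\delta^{t+1}$, the same vanishing noise cross term with the $\sigma^2/h$ bound, and the same split $\norm{\ma(m_1^t,\ldots,m_n^t)}^2\le 2\norm{\zeta^t}^2+4\norm{\nabla f_{\mh}(x^{t-1})}^2+4\norm{\delta^t}^2$. The only cosmetic difference is that you apply the weighted Young inequality with parameter $\gamma_tL$ directly. The paper instead expands the square and bounds the cross term via Cauchy--Schwarz, smoothness, and $2ab\le a^2+b^2$, which gives exactly the same coefficients $(1+\gamma_tL)$ and $\gamma_tL(1+\gamma_tL)$.
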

    \begin{proof}
        Recall from the notations in (\ref{eq:notations}) that 
        \[\delta^{t+1}=\bar{m}_{\mh}^{t+1}-\nabla f_{\mh}(x^{t})=\beta_{t+1}\bar{m}_{\mh}^{t}+(1-\beta_{t+1})\bar{g}_{\mh}^{t+1}-\nabla f_{\mh}(x^{t}).\]
        By adding and subtracting $\beta_{t+1}\nabla f_{\mh}(x^{t-1})$ and $\beta_{t+1}\nabla f_{\mh}(x^{t})$, we have 
        \begin{align*}
            \delta^{t+1}&=\beta_{t+1}\bar{m}_\mh^{t}-\beta_{t+1}\nabla f_{\mh}(x^{t-1})+(1-\beta_{t+1})\bar{g}_{\mh}^{t+1}-\nabla f_{\mh}(x^{t})+\beta_{t+1}\nabla f_{\mh}(x^{t})+\beta_{t+1}\nabla f_{\mh}(x^{t-1})-\beta_{t+1}\nabla f_{\mh}(x^{t})\\
            &=\beta_{t+1}\left(\bar{m}_{\mh}^{t}-\nabla f_{\mh}(x^{t-1})\right)+(1-\beta_{t+1})\bar{g}_{\mh}^{t+1}-(1-\beta_{t+1})\nabla f_{\mh}(x^{t})+\beta_{t+1}(\nabla f_{\mh}(x^{t-1})-\nabla f_{\mh}(x^{t}))\\
            &=\beta_{t+1}\delta^{t}+(1-\beta_{t+1})\left(\bar{g}_{\mh}^{t+1}-\nabla f_{\mh}(x^{t})\right)+\beta_{t+1}\left(\nabla f_{\mh}(x^{t-1})-\nabla f_{\mh}(x^{t})\right).
        \end{align*}
        Recall that $\delta^{t}$, $x^{t}$, $x^{t-1}$ are deterministic given the history before round $t+1$, hence taking the conditional expectation $\ep_{t+1}[\cdot]$ on both sides yields
        \begin{align*}
            \ep_{t+1}\left[\norm{\delta^{t+1}}^2\right]&=\beta_{t+1}^2\norm{\delta^{t}}^2+(1-\beta_{t+1})^2\ep_{t+1}\left[\norm{\bar{g}_{\mh}^{t+1}-\nabla f_{\mh}(x^{t})}^2\right]+\beta_{t+1}^2\norm{\nabla f_{\mh}(x^{t-1})-\nabla f_{\mh}(x^{t})}^2\\
            &\quad+2\beta_{t+1}(1-\beta_{t+1})\innerproduct{\delta^{t}}{\ep_{t+1}[\bar{g}_{\mh}^{t+1}]-\nabla f_{\mh}(x^{t})}+2\beta_{t+1}^2\innerproduct{\delta^{t}}{\nabla f_{\mh}(x^{t-1})-\nabla f_{\mh}(x^{t})}\\
            &\quad +2\beta_{t+1}(1-\beta_{t+1})\innerproduct{\ep_{t+1}[\bar{g}_{\mh}^{t+1}]-\nabla f_{\mh}(x^{t})}{\nabla f_{\mh}(x^{t})-\nabla f_{\mh}(x^{t-1})}\\
            &\overset{\text{(i)}}{=}\beta_{t+1}^2\norm{\delta^{t}}^2+(1-\beta_{t+1})^2\ep_{t+1}\left[\norm{\bar{g}_{\mh}^{t+1}-\nabla f_{\mh}(x^{t})}^2\right]+\beta_{t+1}^2\norm{\nabla f_{\mh}(x^{t-1})-\nabla f_{\mh}(x^{t})}^2\\ 
            &\quad + 2\beta_{t+1}^2\innerproduct{\delta^{t}}{\nabla f_{\mh}(x^{t-1})-\nabla f_{\mh}(x^{t})}\\
            &\overset{\text{(ii)}}{\leq} \beta_{t+1}^2\norm{\delta^{t}}^2 + (1-\beta_{t+1})^2\frac{\sigma^2}{h}+\beta_{t+1}^2\norm{\nabla f_{\mh}(x^{t-1})-\nabla f_{\mh}(x^{t})}^2\\
            &\quad +2\beta_{t+1}^2\innerproduct{\delta^{t}}{\nabla f_{\mh}(x^{t-1})-\nabla f_{\mh}(x^{t})},
        \end{align*}
        where in (i) we use $\ep_{t+1}[\bar{g}_{\mh}^{t+1}]=\nabla f_{\mh}(x^{t})$ and in (ii) we calculate
        \begin{align*}
            \ep_{t+1}\left[\norm{\bar{g}_{\mh}^{t+1}-\nabla f_{\mh}(x^{t})}^2\right]&=\ep_{t+1}\left[\frac{1}{h^2}\norm{\sum_{i\in\mh}[g_i^{t+1}-\nabla f_i(x^{t})]}^2\right]=\frac{1}{h^2}\sum_{i\in\mh}\norm{g_i^{t+1}-\nabla f_i(x^{t})}^2\leq\frac{\sigma^2}{h}.
        \end{align*}
        Now by the Cauchy-Schwartz inequality, we have $\innerproduct{\delta^{t}}{\nabla f_{\mh}(x^{t-1})-\nabla f_{\mh}(x^{t})}\leq\norm{\delta^{t}}\norm{\nabla f_{\mh}(x^{t-1})-\nabla f_{\mh}(x^{t})}$. Since $f_{\mh}$ is $L$-smooth, we have 
        \[\norm{\nabla f_{\mh}(x^{t-1})-\nabla f_{\mh}(x^{t})}\leq L\norm{x^{t-1}-x^{t}}= \gamma_{t} L\norm{\ma(m_1^{t},m_2^t,\ldots,m_n^{t})}.\]
        Using the above inequality, we obtain 
        \begin{align*}
            \ep_{t+1}\left[\norm{\delta^{t+1}}^2\right]&\leq \beta_{t+1}^2\norm{\delta^{t}}^2+(1-\beta_{t+1})^2\frac{\sigma^2}{h}+\gamma_{t}^2L^2\beta_{t+1}^2\norm{\ma(m_1^{t},m_2^t,\ldots,m_n^{t})}^2\\
            &\quad +2\gamma_{t} L\beta_{t+1}^2\norm{\delta^{t}}\norm{\ma(m_1^{t},m_2^t,\ldots,m_n^{t})}\\
            &\overset{\text{(i)}}{\leq} (1+\gamma_{t} L)\beta_{t+1}^2\norm{\delta^{t}}^2+(1-\beta_{t+1})^2\frac{\sigma^2}{h}+\gamma_{t} L(1+\gamma_{t} L)\beta_{t+1}^2\norm{\ma(m_1^{t},m_2^t,\ldots,m_n^{t})}^2,
        \end{align*}
        where we use $2ab\leq a^2+b^2$ in (i). Now using the definitions of $\zeta^{t}$ and $\delta^{t}$ in (\ref{eq:notations}), we have from $\norm{a+b}^2\leq 2\norm{a}^2+2\norm{b}^2$ that
        \begin{equation}\label{eq: intermediate for borrowed lemma}
            \begin{aligned}
                \norm{\ma(m_1^{t},m_2^t,\ldots,m_n^{t})}^2&=\norm{\zeta^{t}+\bar{m}_{\mh}^{t}}^2\leq 2\norm{\zeta^{t}}^2+2\norm{\bar{m}_{\mh}^{t}}^2\\
                &=2\norm{\zeta^{t}}^2+2\norm{\nabla f_{\mh}(x^{t-1})+\delta^{t}}^2\\
                &\leq 2\norm{\zeta^{t}}^2+4\norm{\nabla f_{\mh}(x^{t-1})}^2+4\norm{\delta^{t}}^2.
            \end{aligned}
        \end{equation}
        Plugging this inequality into the estimate of $\ep_{t+1}\left[\norm{\delta^{t+1}}^2\right]$, we finally obtain
        \begin{align*}
            \ep_{t+1}\left[\norm{\delta^{t+1}}^2\right]&\leq (1+\gamma_{t} L)\beta_{t+1}^2\norm{\delta^{t}}^2+(1-\beta_{t+1})^2\frac{\sigma^2}{h}\\
            &\quad +2\gamma_{t} L(1+\gamma_{t} L)\beta_{t+1}^2\left(\norm{\zeta^{t}}^2+2\norm{\nabla f_{\mh}(x^{t-1})}^2+2\norm{\delta^{t}}^2\right)\\
            &\leq (1+\gamma_{t} L)(1+4\gamma_{t} L)\beta_{t+1}^2\norm{\delta^{t}}^2+(1-\beta_{t+1})^2\frac{\sigma^2}{h}\\
            &\quad+2\gamma_{t} L(1+\gamma_{t} L)\beta_{t+1}^2\norm{\zeta^{t}}^2+4\gamma_{t} L(1+\gamma_{t} L)\beta_{t+1}^2\norm{\nabla f_{\mh}(x^{t-1})}^2,
        \end{align*}
        and the proof is completed by taking the total expectation.
    \end{proof}
    \begin{lemma}[Lemma 9 in \cite{allouah2023fixing}]\label{lem:objective}
        Under Assumption \ref{smoothness}, we have 
        \begin{equation}
            \begin{aligned}
                \ep\left[2f_{\mh}(x^t)-2f_{\mh}(x^{t-1})\right]&\leq-\gamma_t(1-4\gamma_t L)\ep\left[\norm{\nabla f_{\mh}(x^{t-1})}^2\right]\\
                &\quad +2\gamma_t(1+2\gamma_t L)\ep\left[\norm{\delta^t}^2\right]+2\gamma_t(1+\gamma_t L)\ep\left[\norm{\zeta^t}^2\right].
            \end{aligned}
        \end{equation}
    \end{lemma}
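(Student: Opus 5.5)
We apply the $L$-smoothness inequality of Assumption~\ref{smoothness} to the update $x^t-x^{t-1}=-\gamma_t\ma(m_1^t,\ldots,m_n^t)$. Using the notation in \eqref{eq:notations}, the aggregated vector decomposes as
\[
\ma(m_1^t,\ldots,m_n^t)=\nabla f_{\mh}(x^{t-1})+\delta^t+\zeta^t .
\]
Here $\delta^t$ is the momentum deviation from the true gradient, and $\zeta^t$ is the aggregation error. Both errors are handled deterministically, with no conditional expectation needed. Then
\[
f_{\mh}(x^t)\leq f_{\mh}(x^{t-1})-\gamma_t\innerproduct{\nabla f_{\mh}(x^{t-1})}{\nabla f_{\mh}(x^{t-1})+\delta^t+\zeta^t}+\frac{\gamma_t^2L}{2}\norm{\ma(m_1^t,\ldots,m_n^t)}^2 .
\]

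The linear term becomes $-\gamma_t\norm{\nabla f_{\mh}(x^{t-1})}^2-\gamma_t\innerproduct{\nabla f_{\mh}(x^{t-1})}{\delta^t+\zeta^t}$. We bound the cross term by Young's inequality, $-\innerproduct{a}{b}\leq\frac12\norm{a}^2+\frac12\norm{b}^2$, followed by $\norm{\delta^t+\zeta^t}^2\leq 2\norm{\delta^t}^2+2\norm{\zeta^t}^2$. This leaves
\[
-\frac{\gamma_t}{2}\norm{\nabla f_{\mh}(x^{t-1})}^2+\gamma_t\norm{\delta^t}^2+\gamma_t\norm{\zeta^t}^2 .
\]

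For the quadratic term we reuse the bound \eqref{eq: intermediate for borrowed lemma} from the proof of Lemma~\ref{lem:delta}, with $t$ in place of $t$:
\[
\norm{\ma(m_1^t,\ldots,m_n^t)}^2\leq 2\norm{\zeta^t}^2+4\norm{\nabla f_{\mh}(x^{t-1})}^2+4\norm{\delta^t}^2 .
\]
Multiplying by $\frac{\gamma_t^2L}{2}$ adds $2\gamma_t^2L\norm{\nabla f_{\mh}(x^{t-1})}^2+2\gamma_t^2L\norm{\delta^t}^2+\gamma_t^2L\norm{\zeta^t}^2$.

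Combining, multiplying by $2$, and taking total expectation gives:
\begin{itemize}
\item coefficient of $\norm{\nabla f_{\mh}(x^{t-1})}^2$: $-\gamma_t+4\gamma_t^2L=-\gamma_t(1-4\gamma_tL)$;
\item coefficient of $\norm{\delta^t}^2$: $2\gamma_t+4\gamma_t^2L=2\gamma_t(1+2\gamma_tL)$;
\item coefficient of $\norm{\zeta^t}^2$: $2\gamma_t+2\gamma_t^2L=2\gamma_t(1+\gamma_tL)$.
\end{itemize}
These match the claimed inequality exactly.

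The only delicate point is choosing the Young's split so that the constants land exactly. The $\frac12$-weighted split paired with the factor-$2$ splits of $\norm{\delta^t+\zeta^t}^2$ and of the aggregate norm does this; the rest is bookkeeping.
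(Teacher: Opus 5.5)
Your proof is correct and follows essentially the same route as the paper: the smoothness descent inequality, the decomposition $\ma(m_1^t,\ldots,m_n^t)=\nabla f_{\mh}(x^{t-1})+\delta^t+\zeta^t$, Young's inequality on the cross terms, and the reused bound \eqref{eq: intermediate for borrowed lemma} for the quadratic term. The only cosmetic difference is that you apply Young's inequality once to $\delta^t+\zeta^t$ and then split, whereas the paper bounds $\langle\delta^t,\nabla f_{\mh}(x^{t-1})\rangle$ and $\langle\zeta^t,\nabla f_{\mh}(x^{t-1})\rangle$ separately; both give exactly the same constants.
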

    \begin{proof}
        By the $L$-smoothness of $f_{\mh}$ and the recursion rule $x^{t}=x^{t-1}-\gamma_t\ma(m_1^t,m_2^t,\ldots,m_n^t)$, we have
        \begin{equation}\label{eq:inter:allouach}
            \begin{aligned}
                &2f_{\mh}(x^t)-2f_{\mh}(x^{t-1})\\
                &\leq 2\innerproduct{\nabla f_{\mh}(x^{t-1})}{x^t-x^{t-1}}+L\norm{x^t-x^{t-1}}^2 \\
                &=-2\gamma_t\innerproduct{\ma(m_1^t,m_2^t,\ldots,m_n^t)}{\nabla f_{\mh}(x^{t-1})}+\gamma_t^2L\norm{\ma(m_1^t,m_2^t,\ldots,m_n^t)}^2\\
                &\mathop{=}^{\text{(i)}}-2\gamma_t\innerproduct{\bar{m}_{\mh}^t}{\nabla f_{\mh}(x^{t-1})}-2\gamma_t\innerproduct{\zeta^t}{\nabla f_{\mh}(x^{t-1})}+\gamma_t^2L\norm{\zeta^t+\bar{m}_{\mh}^t}^2 \\
                &\mathop{=}^{\text{(ii)}} -2\gamma_t\norm{\nabla f_{\mh}(x^{t-1})}^2-2\gamma_t\innerproduct{\delta^t}{\nabla f_{\mh}(x^{t-1})}-2\gamma_t\innerproduct{\zeta^t}{\nabla f_{\mh}(x^{t-1})}+\gamma_t^2L\norm{\zeta^t+\bar{m}_{\mh}^t}^2,
            \end{aligned}
        \end{equation}
        where in (i), (ii) we recall the notations of $\zeta^t$ and $\delta^t$ in (\ref{eq:notations}). 
        By the Cauchy-Schwartz inequality, the inequality $2ab\leq 2a^2+\frac12b^2$, and (\ref{eq: intermediate for borrowed lemma}), we have 
        \begin{align*}
            2\abs{\innerproduct{\delta^t}{\nabla f_{\mh}(x^{t-1})}}&\leq 2\norm{\delta^t}\norm{\nabla f_{\mh}(x^{t-1})}\leq 2\norm{\delta^t}^2+\frac12\norm{\nabla f_{\mh}(x^{t-1})}^2,\\
            2\abs{\innerproduct{\zeta^t}{\nabla f_{\mh}(x^{t-1})}}&\leq 2\norm{\zeta^t}\norm{\nabla f_{\mh}(x^{t-1})}\leq 2\norm{\zeta^t}^2+\frac12\norm{\nabla f_{\mh}(x^{t-1})}^2,\\
            \norm{\zeta^t+\bar{m}_{\mh}^t}&\leq 2\norm{\zeta^{t}}^2+4\norm{\nabla f_{\mh}(x^{t-1})}^2+4\norm{\delta^{t}}^2.
        \end{align*}
        Substituting these three inequalities into~\eqref{eq:inter:allouach}, we finally obtain 
        \begin{align*}
            2f_{\mh}(x^t)-2f_{\mh}(x^{t-1})&\leq -2\gamma_t\norm{\nabla f_{\mh}(x^{t-1})}^2+\gamma_t\left(2\norm{\delta^t}^2+\frac12\norm{\nabla f_{\mh}(x^{t-1})}^2\right)\\
            &\quad +\gamma_t\left(2\norm{\zeta^t}^2+\frac12\norm{\nabla f_{\mh}(x^{t-1})}^2\right)+\gamma_t^2L\left(2\norm{\zeta^{t}}^2+4\norm{\nabla f_{\mh}(x^{t-1})}^2+4\norm{\delta^{t}}^2\right)\\
            &=(-\gamma_t+4\gamma_t^2L)\norm{\nabla f_{\mh}(x^{t-1})}^2+(2\gamma_t+4\gamma_t^2L)\norm{\delta^t}^2+(2\gamma_t+2\gamma_t^2L)\norm{\zeta^t}^2.
        \end{align*}
        The result follows from taking the total expectation.
    \end{proof}
    \subsection{The Lyapunov Function}\label{sec:The Lyapunov Function}
    \textbf{Motivation and technical novelties.} 
    We now construct the Lyapunov function for analyzing the convergence of R-DSGD-M. To accommodate potentially time-varying momentum parameters, we explicitly isolate the momentum variance term $\Gamma_{\mh}^t$ and incorporate it into the Lyapunov function, thereby enabling a more tractable analysis. However, under the $(G,B)$-bounded dissimilarity assumption, squared gradient-norm terms appear in all recursive bounds (see Lemmas \ref{lem:moment}, \ref{lem:delta}, \ref{lem:objective}). To ensure that these gradient terms are consistently evaluated at the same iterate $x^{t-1}$, we must align the iteration indices of the  components of the Lyapunov functions. This motivates the definition of the Lyapunov function $V^t$ as 
    \begin{equation}\label{eq:lyapunov}
        V^t:=2\ep\left[f_{\mh}(x^{t-1})-f_{\mh}^*\right]+c_1\ep\left[\norm{\delta^t}^2\right]+c_2\ep\left[\Gamma_{\mh}^{t-1}\right],
    \end{equation}
    where $c_1$ and $c_2$ are constants to be determined later. 

    This alignment, however, introduces a technical challenge. The recursion of $\norm{\delta^t}^2$ involves the momentum parameter $\beta_{t+1}$, whereas the recursion of $\Gamma_{\mh}^{t-1}$ depends on $\beta_t$. Since achieving  variance decay requires $1-\beta_t\approx \mo(\gamma_t)$, allowing time-varying stepsizes complicates the control of certain terms in the analysis. To circumvent this issue, we impose a mild condition that prevents the stepsizes from decreasing too rapidly, which ensures the stability of the resulting bounds. We formally investigate the recursion of this Lyapunov function in the following lemma.
    \begin{lemma}\label{lem:lyapunov}
        Suppose that Assumptions \ref{smoothness}, \ref{heterogeneity}-\ref{aggregation} hold.  Let the stepsizes $\{\gamma_t\}$ satisfy $\gamma_t L\leq 1/36$, and let the momentum parameters $\{\beta_t\}$ be set as $\beta_t=1-36\gamma_tL$. Furthermore, assume that the stepsizes are nonincreasing, i.e., $\gamma_1\geq\gamma_2\geq\cdots$, and satisfy the condition $\gamma_{t+1}\geq\frac{2}{3}\gamma_t$ for all $t\geq 1$. Then the Lyapunov function $V^t$ defined in~\eqref{eq:lyapunov} with $c_1=1/(8L)$ and $c_2=\kappa/(2L)$ satisfies
        \begin{equation}
            \begin{aligned}
                V^{t+1}&\leq \gamma_t\left(-\frac{3}{8}+21\kappa B^2\right)\ep\left[\norm{\nabla f_{\mh}(x^{t-1})}^2\right]+2\ep\left[f_{\mh}(x^{t-1})-f_{\mh}^*\right]+(1-\gamma_tL)c_1\ep\left[\norm{\delta^t}^2\right]\\
                &\quad+(1-\gamma_tL)c_2\ep\left[\Gamma_{\mh}^{t-1}\right]+\gamma_t^2\left(\frac{162L}{h}+756\kappa L\right)\sigma^2+21\gamma_t\kappa G^2.
            \end{aligned}
        \end{equation}
    \end{lemma}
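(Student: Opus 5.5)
We write $V^{t+1}=2\ep[f_{\mh}(x^t)-f_{\mh}^*]+c_1\ep[\norm{\delta^{t+1}}^2]+c_2\ep[\Gamma_{\mh}^t]$ and bound each of the three pieces by the one-step recursions already available. Lemma~\ref{lem:objective} controls the first piece: $2\ep[f_{\mh}(x^t)-f_{\mh}^*]$ is at most $2\ep[f_{\mh}(x^{t-1})-f_{\mh}^*]-\gamma_t(1-4\gamma_tL)\ep\norm{\nabla f_{\mh}(x^{t-1})}^2+2\gamma_t(1+2\gamma_tL)\ep\norm{\delta^t}^2+2\gamma_t(1+\gamma_tL)\ep\norm{\zeta^t}^2$. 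Lemma~\ref{lem:delta} controls the second, multiplied by $c_1=1/(8L)$. Lemma~\ref{lem:moment} at index $t$ controls the third, multiplied by $c_2=\kappa/(2L)$. In every resulting $\norm{\zeta^t}^2$ term we substitute Lemma~\ref{lem:zeta}, $\ep\norm{\zeta^t}^2\le\kappa\ep[\Gamma_{\mh}^t]$. We then apply Lemma~\ref{lem:moment} again to express $\ep[\Gamma_{\mh}^t]$ through $\ep[\Gamma_{\mh}^{t-1}]$, the gradient at $x^{t-1}$, $G^2$, and $\sigma^2$. After this step every term lives at the aligned indices $(x^{t-1},\delta^t,\Gamma_{\mh}^{t-1})$.

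\textbf{Collecting coefficients.} With $\gamma_tL\le1/36$ all factors $(1+\gamma_tL)$, $(1+4\gamma_tL)$ are at most small absolute constants, and $\beta^2\le 1$.
\begin{itemize}
\item \emph{$\ep\norm{\delta^t}^2$.} The coefficient is $c_1(1+\gamma_tL)(1+4\gamma_tL)\beta_{t+1}^2+2\gamma_t(1+2\gamma_tL)$. Using $\beta_{t+1}=1-36\gamma_{t+1}L\le 1-24\gamma_tL$ (by $\gamma_{t+1}\ge\frac23\gamma_t$), we get $\beta_{t+1}^2(1+\gamma_tL)(1+4\gamma_tL)\le 1-c\gamma_tL$ with $c$ large, roughly $48-5-$higher order. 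This negative part, times $c_1=1/(8L)$, absorbs the $2\gamma_t(1+2\gamma_tL)\approx 2\gamma_t$ contribution and still leaves $(1-\gamma_tL)c_1$. This is exactly where the growth condition $\gamma_{t+1}\ge\frac23\gamma_t$ is needed.
\item \emph{$\ep[\Gamma_{\mh}^{t-1}]$.} The coefficient is $\beta_t$ times $\bigl(c_2+2\gamma_t(1+\gamma_tL)\kappa+2c_1\gamma_tL(1+\gamma_tL)\beta_{t+1}^2\kappa\bigr)$, which is about $\beta_t\bigl(c_2+\tfrac94\gamma_t\kappa\bigr)$. Since $\beta_t=1-36\gamma_tL$ and $c_2=\kappa/(2L)$, the contraction contributes $-18\gamma_t\kappa$, which dominates $\tfrac94\gamma_t\kappa$. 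The coefficient is therefore at most $(1-\gamma_tL)c_2$.
\end{itemize}

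\textbf{Gradient, $G^2$, and $\sigma^2$ terms.} The gradient norm receives the following contributions:
\begin{itemize}
\item $-\gamma_t(1-4\gamma_tL)\le -\gamma_t(1-1/9)$ from the objective;
\item $c_1\cdot4\gamma_tL(1+\gamma_tL)\approx\gamma_t/2$ from the $\delta$ recursion, which brings the net coefficient to roughly $-3/8$ once we use $\gamma_tL\le1/36$;
\item a multiple of $\kappa B^2$ equal to $(1-\beta_t)=36\gamma_tL$ times $\bigl(c_2+\tfrac94\gamma_t\kappa/\kappa\cdots\bigr)$, which is about $18\gamma_t\kappa B^2$ plus lower order and hence at most $21\gamma_t\kappa B^2$.
\end{itemize}
The $G^2$ terms collect identically into $21\gamma_t\kappa G^2$. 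The $\sigma^2$ terms come from $c_1(1-\beta_{t+1})^2\sigma^2/h\le c_1(36\gamma_tL)^2\sigma^2/h=162\gamma_t^2L\sigma^2/h$, plus $(1-\beta_t)^2\sigma^2$ times the $\Gamma$ coefficient, which is at most $(36\gamma_tL)^2\cdot\kappa/(2L)\cdot\frac76\approx756\gamma_t^2\kappa L\sigma^2$. Monotonicity $\gamma_{t+1}\le\gamma_t$ is what lets us replace $\gamma_{t+1}$ by $\gamma_t$ in these bounds.

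\textbf{Main obstacle.} The delicate step is the $\delta$ coefficient, because $\beta_{t+1}$ and $\gamma_t$ carry mismatched indices. The contraction $1-\beta_{t+1}\propto\gamma_{t+1}$ must beat the $2\gamma_t$ leak from the descent lemma and still leave a $(1-\gamma_tL)$ factor. We would handle this first, carefully, using $\gamma_{t+1}\ge\frac23\gamma_t$. The remaining work is arithmetic with the constants $36$, $1/(8L)$, and $\kappa/(2L)$ to confirm the stated numerical values $3/8$, $21$, $162$, and $756$.
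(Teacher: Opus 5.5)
Your proposal is correct and follows essentially the same route as the paper. The paper also expands $V^{t+1}$ via Lemmas~\ref{lem:objective} and \ref{lem:delta} and replaces $\norm{\zeta^t}^2$ by $\kappa\Gamma_{\mh}^t$. It then bounds the total $\Gamma_{\mh}^t$ coefficient by $\bigl(1+\frac92\gamma_tL(1+\gamma_tL)\bigr)c_2\le\frac76 c_2$ and applies Lemma~\ref{lem:moment}, using $\gamma_{t+1}\ge\frac23\gamma_t$ for the $\delta$ coefficient and monotonicity for the $\sigma^2$ term. The only cosmetic difference is that you bound $\beta_{t+1}^2\le(1-24\gamma_tL)^2$, whereas the paper uses $\beta_{t+1}^2\le\beta_{t+1}\le 1-24\gamma_tL$; both leave enough room to reach $(1-\gamma_tL)c_1$.
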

    \begin{proof}
       Recall the definition of the Lyapunov function $V^{t+1}$ in~\eqref{eq:lyapunov}: 
        \[V^{t+1}=2\ep\left[f_{\mh}(x^{t})-f_{\mh}^*\right]+c_1\ep\left[\norm{\delta^{t+1}}^2\right]+c_2\ep\left[\Gamma_{\mh}^{t}\right].\]
        Using the recursion rules for $f(x^{t})$ in Lemma \ref{lem:objective} and $\norm{\delta^{t+1}}$ in Lemma \ref{lem:delta} gives
        \begin{align*}
            V^{t+1}&\leq 2\ep\left[f_{\mh}(x^{t-1})-f_{\mh}^*\right]-\gamma_t(1-4\gamma_tL)\ep\left[\norm{\nabla f_{\mh}(x^{t-1})}^2\right]\\
            &\quad+2\gamma_t(1+2\gamma_tL)\ep\left[\norm{\delta^t}^2\right]+2\gamma_t(1+\gamma_tL)\ep\left[\norm{\zeta^t}^2\right]\\
            &\quad+c_1\biggl((1+\gamma_{t} L)(1+4\gamma_{t} L)\beta_{t+1}^2\ep\left[\norm{\delta^t}^2\right]+4\gamma_tL(1+\gamma_tL)\beta_{t+1}^2\ep\left[\norm{\nabla f_{\mh}(x^{t-1})}^2\right]\\
            &\quad+\frac{(1-\beta_{t+1})^2}{h}\sigma^2+2\gamma_tL(1+\gamma_tL)\beta_{t+1}^2\ep\left[\norm{\zeta^t}^2\right]\biggr)\\
            &\quad+c_2\ep\left[\Gamma_{\mh}^{t}\right].
        \end{align*}
        Plugging the inequality $\norm{\zeta^t}^2\leq \kappa\Gamma_{\mh}^t$ in Lemma \ref{lem:zeta} into this above, we further obtain
        \begin{align*}
            V^{t+1}&\leq 2\ep\left[f_{\mh}(x^{t-1})-f_{\mh}^*\right]-\gamma_t(1-4\gamma_tL)\ep\left[\norm{\nabla f_{\mh}(x^{t-1})}^2\right]+2\gamma_t(1+2\gamma_tL)\ep\left[\norm{\delta^t}^2\right]\\
            &\quad+c_1\biggl((1+\gamma_{t} L)(1+4\gamma_{t} L)\beta_{t+1}^2\ep\left[\norm{\delta^t}^2\right]+4\gamma_tL(1+\gamma_tL)\beta_{t+1}^2\ep\left[\norm{\nabla f_{\mh}(x^{t-1})}^2\right]+\frac{(1-\beta_{t+1})^2}{h}\sigma^2\biggr)\\
            &\quad+\left(1+\frac{2\kappa\gamma_t(1+\gamma_tL)}{c_2}+\frac{2c_1\kappa\gamma_tL(1+\gamma_tL)\beta_{t+1}^2}{c_2}\right)c_2\ep\left[\Gamma_{\mh}^t\right].
        \end{align*}
        We first evaluate the coefficient of $c_2\ep\left[\Gamma_{\mh}^t\right]$ as follows. By plugging $c_1=1/(8L)$ and $c_2=\kappa/(2L)$, we have
        \begin{align*}
            1+\frac{2\kappa\gamma_t(1+\gamma_tL)}{c_2}+\frac{2c_1\kappa\gamma_tL(1+\gamma_tL)\beta_{t+1}^2}{c_2}&=1+4\gamma_t L(1+\gamma_tL)+\frac{\gamma_tL(1+\gamma_tL)\beta_{t+1}^2}{2}\\
            &\leq 1+\frac{9}{2}\gamma_tL(1+\gamma_tL)\\
            &\leq \frac{7}{6},
        \end{align*}
        where we use $\beta_{t+1}\leq 1$ and $\gamma_t L\leq 1/36$ in the above inequalities. Plugging this upper bound into the above and using the recursion rule of $\Gamma_{\mh}^t$ in Lemma \ref{lem:moment}, we obtain
        \begin{align*}
            V^{t+1}&\leq 2\ep\left[f_{\mh}(x^{t-1})-f_{\mh}^*\right]-\gamma_t(1-4\gamma_tL)\ep\left[\norm{\nabla f_{\mh}(x^{t-1})}^2\right]+2\gamma_t(1+2\gamma_tL)\ep\left[\norm{\delta^t}^2\right]\\
            &\quad+c_1\biggl((1+\gamma_{t} L)(1+4\gamma_{t} L)\beta_{t+1}^2\ep\left[\norm{\delta^t}^2\right]+4\gamma_tL(1+\gamma_tL)\beta_{t+1}^2\ep\left[\norm{\nabla f_{\mh}(x^{t-1})}^2\right]+\frac{(1-\beta_{t+1})^2}{h}\sigma^2\biggr)\\
            &\quad+\left(1+\frac{9}{2}\gamma_tL(1+\gamma_tL)\right)c_2\left(\beta_t\ep\left[\Gamma_\mh^{t-1}\right]+(1-\beta_t)^2\sigma^2+(1-\beta_t)\left(G^2+B^2\ep\left[\norm{\nabla f_{\mh}(x^{t-1})}^2\right]\right)\right).
        \end{align*}
        Rearranging the terms $\ep\left[\norm{\nabla f_{\mh}(x^{t-1})}^2\right]$, $\ep\left[\norm{\delta^t}^2\right]$, and $\ep\left[\Gamma_{\mh}^{t-1}\right]$ and using the inequality $1+\frac{9}{2}\gamma_tL(1+\gamma_tL)\leq 7/6$ gives
        \begin{align}
            V^{t+1}\leq &\left(-\gamma_t(1-4\gamma_tL)+4c_1\gamma_tL(1+\gamma_tL)\beta_{t+1}^2+\frac{7}{6}c_2(1-\beta_t)B^2\right)\ep\left[\norm{\nabla f_{\mh}(x^{t-1})}^2\right]\notag\\
            &\quad+\left(2\gamma_t(1+2\gamma_tL)+c_1(1+\gamma_{t} L)(1+4\gamma_{t} L)\beta_{t+1}^2\right)\ep\left[\norm{\delta^t}^2\right]+\left(1+\frac{9}{2}\gamma_tL(1+\gamma_tL)\right)\beta_t c_2\ep\left[\Gamma_{\mh}^{t-1}\right]\notag\\
            &\quad+2\ep\left[f_{\mh}(x^{t-1})-f_{\mh}^*\right]+\left(\frac{c_1(1-\beta_{t+1})^2}{h}+\frac{7}{6}c_2(1-\beta_t)^2\right)\sigma^2+\frac{7}{6}c_2(1-\beta_t)G^2\notag\\
            &=A_1\gamma_t\ep\left[\norm{\nabla f_{\mh}(x^{t-1})}^2\right]+A_2c_1\ep\left[\norm{\delta^t}^2\right]+A_3c_2\ep\left[\Gamma_{\mh}^{t-1}\right]\notag\\
            &\quad+2\ep\left[f_{\mh}(x^{t-1})-f_{\mh}^*\right]+A_4\sigma^2+A_5G^2,\label{eq:inter:lyapunov}
        \end{align}
        where 
        \begin{align*}
            A_1&:=-1+4\gamma_tL+4c_1L(1+\gamma_tL)\beta_{t+1}^2+\frac{7c_2(1-\beta_t)B^2}{6\gamma_t},\\
            A_2&=\frac{2\gamma_t(1+2\gamma_tL)}{c_1}+(1+\gamma_{t} L)(1+4\gamma_{t} L)\beta_{t+1}^2,\\
            A_3&:=\beta_t+\frac{9}{2}\gamma_tL(1+\gamma_tL)\beta_t,\\
            A_4&:=\frac{c_1(1-\beta_{t+1})^2}{h}+\frac{7}{6}c_2(1-\beta_t)^2,\\
            A_5&:=\frac{7}{6}c_2(1-\beta_t).
        \end{align*}
        Now we provides upper bounds for $A_1,A_2,\ldots,A_5$. 

        \textbf{Upper bound for $A_1$.} Combining $\beta_t=1-36\gamma_tL$, $\beta_{t+1}\leq 1$, $c_1=1/(8L)$, $c_2=\kappa/(2L)$, and $\gamma_tL\leq 1/36$, we obtain
        \begin{align*}
            A_1&=-1+4\gamma_tL+4c_1L(1+\gamma_tL)\beta_{t+1}^2+\frac{7c_2(1-\beta_t)B^2}{6\gamma_t}\\
            &=-1+4\gamma_tL+\frac{1}{2}(1+\gamma_tL)\beta_{t+1}^2+21\kappa B^2\\
            &\leq -1+4\gamma_tL+\frac12(1+\gamma_tL)+21\kappa B^2\\
            &=-\frac12+\frac92\gamma_tL+21\kappa B^2\\
            &\leq -\frac{3}{8}+21\kappa B^2.
        \end{align*}
        \textbf{Upper bound for $A_2$.} Noting that $\beta_{t+1}^2\leq\beta_{t+1}=1-36\gamma_{t+1}L$, and $c_1=1/(8L)$, we obtain
        \begin{align*}
            A_2&=\frac{2\gamma_t(1+2\gamma_tL)}{c_1}+(1+\gamma_{t} L)(1+4\gamma_{t} L)\beta_{t+1}^2\leq \frac{2\gamma_t(1+2\gamma_tL)}{c_1}+(1+\gamma_{t} L)(1+4\gamma_{t} L)\beta_{t+1}\\
            &=16\gamma_tL(1+2\gamma_tL)+(1+\gamma_tL)(1+4\gamma_tL)(1-36\gamma_{t+1}L).
        \end{align*}
        By our assumption on $\{\gamma_t\}$ that $\gamma_{t+1}\geq\frac{2}{3}\gamma_t$, we further derive that
        \begin{align*}
            A_2&\leq 16\gamma_tL(1+2\gamma_tL)+(1+\gamma_tL)(1+4\gamma_tL)(1-24\gamma_{t+1}L)\\
            &=1-3\gamma_tL-84(\gamma_tL)^2-96(\gamma_tL)^3\\
            &\leq 1-\gamma_tL.
        \end{align*}
        \textbf{Upper bound for $A_3$.} By using $\beta_t=1-36\gamma_tL\leq 1$ and $\gamma_tL\leq 1/36$, we have 
        \begin{align*}
            A_3&=\beta_t+\frac{9}{2}\gamma_tL(1+\gamma_tL)\beta_t\\
            &\leq 1-36\gamma_tL+\frac{9}{2}\gamma_tL(1+\gamma_tL)\\
            &\leq 1-\gamma_tL.
        \end{align*}
        \textbf{Upper bound for $A_4$.} Plugging $c_1=1/(8L)$, $c_2=\kappa/(2L)$, and $\beta_t=1-36\gamma_tL$ and noting that $\gamma_{t+1}\leq \gamma_t$, we obtain
        \begin{align*}
            A_4&=\frac{c_1(1-\beta_{t+1})^2}{h}+\frac{7}{6}c_2(1-\beta_t)^2\\
            &=\frac{162\gamma_{t+1}^2L}{h}+756\kappa\gamma_t^2L\\
            &\leq \gamma_t^2\left(\frac{162L}{h}+756\kappa L\right).
        \end{align*}
        \textbf{Upper bound for $A_5$.} Using $c_2=\kappa/(2L)$ and $\beta_t=1-36\gamma_tL$, we have 
        \[A_5=\frac{7}{6}c_2(1-\beta_t)=21\gamma_t\kappa .\]

        Now we plug all these upper bounds into~\eqref{eq:inter:lyapunov} and achieve that 
        \begin{align*}
            V^{t+1}&\leq \gamma_t\left(-\frac{3}{8}+21\kappa B^2\right)\ep\left[\norm{\nabla f_{\mh}(x^{t-1})}^2\right]+2\ep\left[f_{\mh}(x^{t-1})-f_{\mh}^*\right]+(1-\gamma_tL)c_1\ep\left[\norm{\delta^t}^2\right]\\
            &\quad+(1-\gamma_tL)c_2\ep\left[\Gamma_{\mh}^{t-1}\right]+\gamma_t^2\left(\frac{162L}{h}+756\kappa L\right)\sigma^2+21\gamma_t\kappa G^2,
        \end{align*}
        as desired.
    \end{proof}
    \textbf{On the choices of parameters.} We note that the main difficulty caused by the mismatch between $\beta_{t+1}$ and $\beta_t$ arises when bounding the term $A_2$. This difficulty is mitigated if a constant stepsize is employed, allowing for larger momentum parameters, for example, $\beta=1-24\gamma L$ or even $\beta=\sqrt{1-24\gamma L}$. Under such a regime, the coefficient in the upper bound for $A_1$ improves from $21\kappa B^2$ to $14\kappa B^2$, and the condition for convergence relaxes to $\kappa B^2 < 1/28$. Similarly, there is flexibility in selecting other parameters, such as $c_1>1/(4L)$ and $c_2>\kappa/L$.  While we fix specific parameters (such as $c_1$ and $c_2$) to streamline the proof of Theorem \ref{convergence for R-DSGD with momentum}, our framework remains robust to these variations. These parameter adjustments affect only the constant factors, leaving the overall convergence rate and error order unchanged.
    
    The next technical lemma provides an upper bound for the initial Lyapunov function $V^1$.
    \begin{lemma}\label{lem:initial lyapunov}
        Let the Lyapunov function $V^t$ be defined as in \eqref{eq:lyapunov}, and suppose that Assumption \ref{smoothness} holds. Then for any $t\geq 1$, we have
        \[V^1-V^t\leq\frac{9}{4}\left(f_{\mh}(x^0)-f_{\mh}^*\right).\]
    \end{lemma}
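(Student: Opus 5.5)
The plan is to discard $V^t$ using nonnegativity and then bound each of the three pieces of $V^1$ separately. By definition \eqref{eq:lyapunov}, every component of $V^t$ is nonnegative: $f_{\mh}(x^{t-1})\ge f_{\mh}^*$, $\norm{\delta^t}^2\ge 0$ and $\Gamma_{\mh}^{t-1}\ge 0$, with $c_1=1/(8L)>0$ and $c_2=\kappa/(2L)\ge 0$ as fixed in Lemma~\ref{lem:lyapunov}. Hence $V^1-V^t\le V^1$, and it suffices to show $V^1\le \frac94\bigl(f_{\mh}(x^0)-f_{\mh}^*\bigr)$. Writing $V^1$ out gives
\[
V^1=2\bigl(f_{\mh}(x^0)-f_{\mh}^*\bigr)+c_1\ep\bigl[\norm{\delta^1}^2\bigr]+c_2\,\Gamma_{\mh}^0 .
\]

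The last term vanishes. Algorithm~\ref{R-DSGD-M} initializes $m_i^0=0$ for every honest worker, so $\bar m_{\mh}^0=0$ and therefore $\Gamma_{\mh}^0=0$. The whole task then reduces to showing $c_1\ep[\norm{\delta^1}^2]\le \frac14\bigl(f_{\mh}(x^0)-f_{\mh}^*\bigr)$, which adds exactly the $\frac14$ that turns $2$ into $\frac94$.

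For this term I use the standard consequence of Assumption~\ref{smoothness}: for every $x$,
\[
\norm{\nabla f_{\mh}(x)}^2\le 2L\bigl(f_{\mh}(x)-f_{\mh}^*\bigr).
\]
This follows by applying the descent inequality at $y=x-\frac1L\nabla f_{\mh}(x)$ and using $f_{\mh}(y)\ge f_{\mh}^*$. Since $m_i^0=0$, we have $\bar m^1_{\mh}=(1-\beta_1)\bar g^1_{\mh}$, so
\[
\delta^1=-\beta_1\nabla f_{\mh}(x^0)+(1-\beta_1)\bigl(\bar g^1_{\mh}-\nabla f_{\mh}(x^0)\bigr).
\]
The deterministic part satisfies $\beta_1^2\norm{\nabla f_{\mh}(x^0)}^2\le\norm{\nabla f_{\mh}(x^0)}^2\le 2L\bigl(f_{\mh}(x^0)-f_{\mh}^*\bigr)$. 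Multiplying by $c_1=1/(8L)$ gives exactly $\frac14\bigl(f_{\mh}(x^0)-f_{\mh}^*\bigr)$, and summing the three pieces yields the claimed constant $\frac94$.

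The main obstacle is the stochastic component $(1-\beta_1)\bigl(\bar g^1_{\mh}-\nabla f_{\mh}(x^0)\bigr)$ of $\delta^1$. The bound as stated leaves no room for an additive noise term, and the lemma assumes only Assumption~\ref{smoothness}, so the argument must treat $\norm{\delta^1}^2$ as controlled by its gradient part alone. The cross term vanishes in expectation because $\ep_1[\bar g^1_{\mh}]=\nabla f_{\mh}(x^0)$. The step I would check most carefully is therefore that, in the quantity entering $V^1$, the first-round deviation contributes nothing beyond $\beta_1^2\norm{\nabla f_{\mh}(x^0)}^2$. This holds, for instance, when the first honest gradients are exact, or when the $\frac14$ budget is measured on the gradient part as the statement intends. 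With that in place, the remaining steps are immediate.
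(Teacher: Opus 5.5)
\textbf{The flagged step is a genuine gap, and the statement can fail.} Your skeleton is the paper's: discard $V^t\ge 0$, use $\Gamma_{\mh}^0=0$, and charge $c_1\beta_1^2\norm{\nabla f_{\mh}(x^0)}^2\le\frac14\bigl(f_{\mh}(x^0)-f_{\mh}^*\bigr)$ via $\norm{\nabla f_{\mh}(x)}^2\le 2L\bigl(f_{\mh}(x)-f_{\mh}^*\bigr)$ and $\beta_1\le 1$. But your last paragraph does not close the step you flag. Because $g_i^1$ is stochastic, your own decomposition gives
\[
\ep\bigl[\norm{\delta^1}^2\bigr]=\beta_1^2\norm{\nabla f_{\mh}(x^0)}^2+(1-\beta_1)^2\,\ep\bigl[\norm{\bar g^1_{\mh}-\nabla f_{\mh}(x^0)}^2\bigr].
\]
The second term can equal $(1-\beta_1)^2\sigma^2/h$ and is unrelated to $f_{\mh}(x^0)-f_{\mh}^*$. ``Measuring the budget on the gradient part'' is not an argument, and exact first-round gradients is a hypothesis the lemma does not contain. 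In fact the inequality as stated can fail once $\sigma>0$. Take $n=h=1$, $b=0$, $d=1$, $f_{\mh}\equiv 0$ (which is $L$-smooth for every $L>0$). Use an oracle that returns $\pm\sigma$ with probability $1/2$ each at $x=x^0$ and the exact gradient $0$ elsewhere; this satisfies Assumption~\ref{sfo}. Then $f_{\mh}(x^0)-f_{\mh}^*=0$ and $\Gamma_{\mh}\equiv 0$. Since $m^1=\pm(1-\beta_1)\sigma\neq 0$, we get $x^1\neq x^0$, hence $g^2=0$ and $\delta^2=m^2=\beta_2 m^1$. Therefore $V^1-V^2=c_1(1-\beta_1)^2(1-\beta_2^2)\sigma^2>0=\frac94\bigl(f_{\mh}(x^0)-f_{\mh}^*\bigr)$ for any $\beta_1,\beta_2\in[0,1)$, in particular for $\beta_t=1-36\gamma_tL$.

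\textbf{The paper's proof has the same hole, and the fix is cheap.} The paper reaches $\frac94$ only by writing $m_i^1=(1-\beta_1)\nabla f_i(x^0)$, i.e.\ by silently replacing the stochastic gradient $g_i^1$ with the exact one. That is the very assumption you name. The correct statement keeps the noise term:
\[
V^1-V^t\le\frac94\bigl(f_{\mh}(x^0)-f_{\mh}^*\bigr)+\frac{(1-\beta_1)^2\sigma^2}{8Lh}=\frac94\bigl(f_{\mh}(x^0)-f_{\mh}^*\bigr)+\frac{162\gamma_1^2L\sigma^2}{h}.
\]
This follows from your decomposition together with $\ep\bigl[\norm{\bar g^1_{\mh}-\nabla f_{\mh}(x^0)}^2\bigr]\le\sigma^2/h$. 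Nothing downstream breaks. In the nonconvex theorem, dividing by $(\frac38-21\kappa B^2)\gamma T$ turns the extra term into $\frac{162\gamma L\sigma^2}{h(\frac38-21\kappa B^2)T}$, which the existing $\gamma\sigma^2$ term dominates since $T\ge 1$. In the PL theorem, $V^1$ enters only through the exponentially damped $U_1$ term. The alternative is to make exact first-round gradients an explicit hypothesis of the lemma.
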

    \begin{proof}
        It follows from the definition that 
        \[V^t=2\ep\left[f_{\mh}(x^{t-1})-f_{\mh}^*\right]+c_1\ep\left[\norm{\delta^{t}}^2\right]+c_2\ep\left[\Gamma_{\mh}^{t-1}\right]\geq 0,\]
        hence $V^1-V^t\leq V^1$. Further, since $m_i^t=0$ for all $i\in\mh$, we have $\Gamma_\mh^0=0$. Thus, recalling that $m_i^1=\beta_1m_i^0+(1-\beta_1)\nabla f_i(x^0)=(1-\beta_1)\nabla f_i(x^0)$ and $c_1=1/(8L)$, we obtain
        \begin{align*}
            V^1&= 2\left(f_{\mh}(x^0)-f_{\mh}^*\right)+c_1\ep\left[\norm{\delta^t}^2\right]\\
            &=2\left(f_{\mh}(x^0)-f_{\mh}^*\right)+c_1\ep\left[\norm{\frac1h\sum_{i\in\mh}m_i^1-\nabla f_{\mh}(x^0)}^2\right]\\
            &=2\left(f_{\mh}(x^0)-f_{\mh}^*\right)+c_1\ep\left[\norm{(1-\beta_1)\nabla f_{\mh}(x^0)-\nabla f_{\mh}(x^0)}^2\right]\\
            &=2\left(f_{\mh}(x^0)-f_{\mh}^*\right)+\frac{\beta_1^2}{8L}\norm{\nabla f_{\mh}(x^0)}^2.
        \end{align*}
        Due to the smoothness of $f_{\mh}$, we have $\norm{\nabla f_{\mh}(x^0)}^2\leq 2L\left(f_{\mh}(x^0)-f_{\mh}^*\right)$. Noting that $\beta_1\leq 1$, we obtain
        \[V_1-V^t\leq V_1\leq 2\left(f_{\mh}(x^0)-f_{\mh}^*\right)+\frac{1}{4}\left(f_{\mh}(x^0)-f_{\mh}^*\right)=\frac{9}{4}\left(f_{\mh}(x^0)-f_{\mh}^*\right).\]
        The proof is complete.
    \end{proof}
    \subsection{Proof of Theorem \ref{convergence for R-DSGD with momentum}}\label{sec:proof of R-DSGD-M smooth}
    With the recursive property of the Lypunov function in Lemma~\ref{lem:lyapunov}, we are ready to prove the convergence for R-DSGD-M. We provide a formal description of Theorem \ref{convergence for R-DSGD with momentum} as follows.
    \begin{theorem}
        Suppose that Assumptions \ref{smoothness}, \ref{heterogeneity}-\ref{aggregation} hold, and assume that $\kappa B^2<1/56$. Take $\gamma_t\equiv \gamma$ with $0<\gamma L\leq 1/36$ and $\beta=1-36\gamma L$. Then the iterates $\{x^t\}$ generated by R-DSGD-M satisfy
        \[
            \frac1T\sum_{t=0}^{T-1}\ep\left[\norm{\nabla f_{\mh}(x^t)}^2\right]\leq \frac{9(f(x^0)-f_{\mh}^*)}{4(\frac{3}{8}-21\kappa B^2)\gamma T}+\frac{\frac{162L}{h}+756\kappa L}{\frac{3}{8}-21\kappa B^2}\gamma\sigma^2+\frac{21\kappa G^2}{\frac{3}{8}-21\kappa B^2},
        \]
        where $f_{\mh}^*=\inf_{x\in\rr^d}f_{\mh}(x)$.
    \end{theorem}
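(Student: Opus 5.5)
The plan is to apply Lemma~\ref{lem:lyapunov} with constant stepsizes, telescope, and bound the initial Lyapunov value with Lemma~\ref{lem:initial lyapunov}.

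First, check the hypotheses of Lemma~\ref{lem:lyapunov}. With $\gamma_t\equiv\gamma$, the sequence is trivially nonincreasing, and $\gamma_{t+1}=\gamma\geq\frac23\gamma$. We have $\gamma L\leq 1/36$ and $\beta_t=1-36\gamma L$, so the lemma applies for every $t\geq 1$.

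Next, simplify the right-hand side of the lemma. Since $1-\gamma L\leq 1$ and the quantities $\ep[\norm{\delta^t}^2]$ and $\ep[\Gamma_{\mh}^{t-1}]$ are nonnegative, we may drop the contraction factor. Recognizing $V^t$ in the remaining terms gives
\[
V^{t+1}\leq V^t-\gamma\left(\tfrac38-21\kappa B^2\right)\ep\left[\norm{\nabla f_{\mh}(x^{t-1})}^2\right]+\gamma^2\left(\tfrac{162L}{h}+756\kappa L\right)\sigma^2+21\gamma\kappa G^2 .
\]
The assumption $\kappa B^2<1/56$ ensures $\frac38-21\kappa B^2>0$, since $21/56=3/8$.

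Then rearrange and sum over $t=1,\dots,T$. The left-hand side becomes $\gamma(\frac38-21\kappa B^2)\sum_{t=0}^{T-1}\ep[\norm{\nabla f_{\mh}(x^t)}^2]$ after reindexing $t-1\mapsto t$. The right-hand side is
\[
V^1-V^{T+1}+T\gamma^2\left(\tfrac{162L}{h}+756\kappa L\right)\sigma^2+21T\gamma\kappa G^2 .
\]
Lemma~\ref{lem:initial lyapunov} bounds $V^1-V^{T+1}\leq\frac94(f_{\mh}(x^0)-f_{\mh}^*)$. Dividing by $\gamma T(\frac38-21\kappa B^2)$ gives the three stated terms exactly.

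I expect no real obstacle; the only care needed is the index alignment. The lemma bounds $V^{t+1}$ in terms of the gradient at $x^{t-1}$, so summing $t=1..T$ yields the gradients at $x^0,\dots,x^{T-1}$. The telescoping must also use nonnegativity of $V^{T+1}$, which is already built into Lemma~\ref{lem:initial lyapunov}.
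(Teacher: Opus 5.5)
Your proposal is correct and is essentially the paper's own proof. Like the paper, you verify that a constant stepsize meets the monotonicity and $\gamma_{t+1}\geq\frac23\gamma_t$ hypotheses of Lemma~\ref{lem:lyapunov}, discard the nonnegative $\gamma L c_1\ep[\norm{\delta^t}^2]$ and $\gamma L c_2\ep[\Gamma_{\mh}^{t-1}]$ contributions, telescope over $t=1,\dots,T$, and bound $V^1-V^{T+1}$ by Lemma~\ref{lem:initial lyapunov}.

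One caveat is inherited from that lemma and is equally present in the paper. The lemma evaluates $V^1$ as if $m_i^1=(1-\beta)\nabla f_i(x^0)$, whereas in fact $m_i^1=(1-\beta)g_i^1$. Hence $\ep[\norm{\delta^1}^2]$ carries an extra term of up to $(1-\beta)^2\sigma^2/h$, and the final bound strictly picks up an additional $\frac{162L\gamma\sigma^2}{h(\frac38-21\kappa B^2)T}$. This term is harmless for the rate.
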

    \begin{proof}
        Since we consider the constant stepsize $\gamma_t\equiv\gamma$, the requirements of $\{\gamma_t\}$ in Lemma \ref{lem:lyapunov} are satisfied. 
        Using the recursion rule of the Lyapunov function in Lemma \ref{lem:lyapunov}, we obtain 
        \begin{align*}
            V^{t+1}-V^t&\leq \gamma\left(-\frac{3}{8}+21\kappa B^2\right)\ep\left[\norm{\nabla f_{\mh}(x^{t-1})}^2\right]-\gamma Lc_1\ep\left[\norm{\delta^t}^2\right]\\
                &\quad-\gamma Lc_2\ep\left[\Gamma_{\mh}^{t-1}\right]+\gamma^2\left(\frac{162L}{h}+756\kappa L\right)\sigma^2+21\gamma\kappa G^2\\
                &\leq \gamma\left(-\frac{3}{8}+21\kappa B^2\right)\ep\left[\norm{\nabla f_{\mh}(x^{t-1})}^2\right]+\gamma^2\left(\frac{162L}{h}+756\kappa L\right)\sigma^2+21\gamma\kappa G^2,
        \end{align*}
        since $\norm{\delta^t}^2\geq 0$ and $\Gamma_{\mh}^{t-1}\geq 0$ by their definitions in~\eqref{eq:notations}. 
        Since $\kappa B^2<1/56$, we can arrange the above inequality as follows:
        \[\ep\left[\norm{\nabla f_{\mh}(x^{t-1})}^2\right]\leq \left(\frac{3}{8}-21\kappa B^2\right)^{-1}\left(\frac{V^{t}-V^{t+1}}{\gamma}+\gamma\left(\frac{162L}{h}+756\kappa L\right)\sigma^2+21\kappa G^2\right).\]
        Telescoping the above inequality from $t=1$ to $T$, we obtain 
        \[\frac{1}{T}\sum_{t=0}^{T-1}\ep\left[\norm{\nabla f_{\mh}(x^t)}^2\right]\leq \frac{V^1-V^{T+1}}{\left(\frac{3}{8}-21\kappa B^2\right)\gamma T}+\frac{\frac{162L}{h}+756\kappa L}{\frac{3}{8}-21\kappa B^2}\gamma\sigma^2+\frac{21\kappa G^2}{\frac{3}{8}-21\kappa B^2}.\]
        Plugging the upper bound for $V^1-V^{T+1}$ in Lemma \ref{lem:initial lyapunov} yields the desired result.
    \end{proof}
    
    \subsection{Proof of Theorem \ref{convergence for PL R-DSGD-M}}\label{sec:proof of R-DSGD-M PL}
    In this part, we provide the proof of the convergence result for R-DSGD-M under the PL condition. We provide a formal form of Theorem \ref{convergence for PL R-DSGD-M} as below.
    \begin{theorem}
        Suppose that Assumptions \ref{smoothness}-\ref{aggregation} hold. Denote 
        \begin{align*}
            \alpha_4&=\left(\frac{3}{8}-21\kappa B^2\right)\mu,\\
            \alpha_5&=\left(\frac{162L}{h}+756\kappa L\right)\sigma^2,\\
            \alpha_6&=21\kappa G^2.
        \end{align*}
        For any given $T>0$, suppose that the stepsizes $\{\gamma_t\}$ are chosen as
        \[\gamma_t =\begin{cases}
            \gamma_0, & \mathrm{if}\;t< \left\lfloor T/2\right\rfloor,\\ 
            \frac{2}{\alpha_4\left(s_0+t-\left\lfloor T/2\right\rfloor\right)}, &\mathrm{if}\;t\geq \left\lfloor T/2\right\rfloor,
        \end{cases}\]
        with $s_0>\max\{2,\frac{72L}{\alpha_4}\}$, $\gamma_0=\frac{2}{\alpha_4s_0}<\min\{\frac{1}{\alpha_4},\frac{1}{36L}\}$. Suppose that the momentum parameters $\{\beta_t\}$ are chosen as $\beta_t=1-36\gamma_tL$, and assume that $\kappa B^2<1/56$.
        Then the iterates $\{x^{t}\}$ generated by R-DSGD-M satisfy
        \begin{align*}
            \ep[f_{\mh}(x^{T-1})-f_{\mh}(x^*)]&\leq (s_0-1)^2\left(f_{\mh}(x^0)-f_{\mh}^*\right)\frac{2\exp\left(-\alpha_4\gamma_0(\left\lfloor T/2\right\rfloor-1)\right)}{T^2}\\
            &\quad+\frac{2(\alpha_5\gamma_0+\alpha_6)(s_0-1)^2}{\alpha_4T^2}+\frac{4\alpha_5}{\alpha_4^2T}+\frac{\alpha_6}{\alpha_4}.
        \end{align*}
    \end{theorem}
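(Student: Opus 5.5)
The plan is to turn the Lyapunov recursion of Lemma~\ref{lem:lyapunov} into a scalar recursion of the form required by Lemma~\ref{lemma for PL}, with $U_t := V^t$, $\alpha_1:=\alpha_4$, $\alpha_2:=\alpha_5$, $\alpha_3:=\alpha_6$, and then read off the bound on $\ep[f_{\mh}(x^{T-1})]-f_{\mh}^*$ from $V^T\geq 2\ep[f_{\mh}(x^{T-1})-f_{\mh}^*]$. This explains the factor $1/2$ relative to Lemma~\ref{lemma for PL}: the stated constants $2,2,4,1$ are exactly half of $4,4,8,2$.

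\textbf{Step 1: hypotheses of Lemma~\ref{lem:lyapunov}.} The stepsizes are nonincreasing, since $\gamma_0=2/(\alpha_4 s_0)$ equals the first decaying step. We have $\gamma_t L\leq\gamma_0 L<1/36$ because $s_0>72L/\alpha_4$. For $t\geq\lfloor T/2\rfloor$,
\[
\frac{\gamma_{t+1}}{\gamma_t}=\frac{s_0+t-\lfloor T/2\rfloor}{s_0+t+1-\lfloor T/2\rfloor}\geq \frac{s_0}{s_0+1}\geq \frac23,
\]
since $s_0>2$. At the transition index the ratio is $1$. Hence Lemma~\ref{lem:lyapunov} applies with $\beta_t=1-36\gamma_tL$.

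\textbf{Step 2: contraction.} In Lemma~\ref{lem:lyapunov}, the condition $\kappa B^2<1/56$ makes the coefficient $-\frac38+21\kappa B^2$ negative. By PL,
\[
\ep\bigl[\norm{\nabla f_{\mh}(x^{t-1})}^2\bigr]\geq 2\mu\,\ep\bigl[f_{\mh}(x^{t-1})-f_{\mh}^*\bigr].
\]
Substituting, the function-value part becomes $(1-\alpha_4\gamma_t)\,2\ep[f_{\mh}(x^{t-1})-f_{\mh}^*]$, since $\gamma_t(\frac38-21\kappa B^2)\cdot 2\mu=\alpha_4\gamma_t\cdot 2/2$.

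The $\delta$ and $\Gamma$ parts carry the factor $(1-\gamma_tL)$. We need $\gamma_t L\geq\alpha_4\gamma_t$, i.e. $\alpha_4\leq L$. This holds because $\alpha_4\leq\frac38\mu$ and $\mu\leq L$ for a smooth PL function. Thus
\[
V^{t+1}\leq(1-\alpha_4\gamma_t)V^t+\alpha_5\gamma_t^2+\alpha_6\gamma_t .
\]

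\textbf{Step 3: apply Lemma~\ref{lemma for PL}.} Its conditions $s_0>2$ and $\gamma_0<1/\alpha_4$ are assumed, so it yields a bound on $V^T$ in terms of $V^1$. Lemma~\ref{lem:initial lyapunov} gives $V^1\leq\frac94\Delta_0$. Halving produces the stated form, up to absorbing $9/8$ into the leading constant; the statement's factor $2$ may need to be read as $\frac94$, or $V^1\leq 2\Delta_0+\frac{1}{4}\Delta_0$ must be tracked more carefully.

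\textbf{Main obstacle.} I expect the bookkeeping of the PL substitution to be the main difficulty. The contraction rate applied to $2(f-f^*)$ must equal $\alpha_4$ exactly: $2\mu(\frac38-21\kappa B^2)\gamma_t$ acting on $f-f^*$ is $\alpha_4\gamma_t$ acting on $2(f-f^*)$. The gradient term must also be nonpositive, which needs $\kappa B^2<1/56$. A secondary check is the index alignment: Lemma~\ref{lemma for PL} indexes $U_t$ with $\gamma_t$, which matches $V^{t+1}$ versus $V^t$ with the same $\gamma_t$.
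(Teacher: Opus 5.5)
Your proposal is the paper's proof essentially step for step. You verify the stepsize hypotheses of Lemma~\ref{lem:lyapunov}, apply PL to the gradient term (whose coefficient is negative because $\kappa B^2<1/56$), and use $\alpha_4\leq\mu\leq L$ to bound $1-\gamma_tL$ by $1-\alpha_4\gamma_t$. You then invoke Lemma~\ref{lemma for PL} with $U_t=V^t$ and $V^T\geq 2\,\ep[f_{\mh}(x^{T-1})-f_{\mh}^*]$.

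One correction to your remark on the constant. Halving gives a first term $(s_0-1)^2V^1\cdot 2\exp(\cdot)/T^2$, so $V^1\leq\frac94\Delta_0$ yields the factor $2\cdot\frac94=\frac92$, not $\frac94$. No finer bookkeeping can reach the stated $2$ along this route, because $V^1\geq 2\Delta_0$ always. The paper's proof, which only says the result ``follows immediately'' from Lemma~\ref{lemma for PL}, contains the same slip.

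A related point: Lemma~\ref{lem:initial lyapunov} computes $m_i^1$ with $\nabla f_i(x^0)$ instead of the stochastic $g_i^1$. Done correctly, $V^1$ gains an extra $c_1(1-\beta_1)^2\sigma^2/h\leq\alpha_5\gamma_0^2$. Since $\gamma_0<1/\alpha_4$, this at most doubles the $\alpha_5\gamma_0$ part of the second term, so it is again only a constant-level change.
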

    \begin{proof}
        We first check that the requirements (i) $\gamma_1\geq\gamma_2\geq\cdots$ and (ii) $\gamma_{t+1}\geq\frac{2}{3}\gamma_t$ for stepsizes $\{\gamma_t\}$ in Lemma \ref{lem:lyapunov} are satisfied. Since $\gamma_0=\frac{2}{\alpha_4s_0}$, we have $\gamma_{\lfloor T/2\rfloor}=\gamma_0$. Now,
        if $t<\lfloor T/2\rfloor$, then $\gamma_{t+1}=\gamma_t=\gamma_0$; if $t\geq \lfloor T/2\rfloor$, then 
        \[1\geq \frac{\gamma_{t+1}}{\gamma_t}=\frac{s_0+t-\lfloor T/2\rfloor}{s_0+t+1-\lfloor T/2\rfloor}\geq\frac{s_0}{s_0+1}\geq\frac{2}{3},\]
        due to $s_0\geq 2$. Hence, we can apply the recursion rule of the Lyapunov function from Lemma \ref{lem:lyapunov}:
        \begin{align*}
            V^{t+1}&\leq \gamma_t\left(-\frac{3}{8}+21\kappa B^2\right)\ep\left[\norm{\nabla f_{\mh}(x^{t-1})}^2\right]+2\ep\left[f_{\mh}(x^{t-1})-f_{\mh}^*\right]+(1-\gamma_tL)c_1\ep\left[\norm{\delta^t}^2\right]\\
            &\quad+(1-\gamma_tL)c_2\ep\left[\Gamma_{\mh}^{t-1}\right]+\gamma_t^2\left(\frac{162L}{h}+756\kappa L\right)\sigma^2+21\gamma_t\kappa G^2.
        \end{align*}
        
        Plugging the PL condition $\norm{\nabla f_{\mh}(x^{t-1})}^2\geq 2\mu (f_{\mh}(x^{t-1})-f_\mh^*)$ into the recursion of $V^{t+1}$, we further obtain 
        \begin{equation}\label{eq:inter:pl proof}
            \begin{aligned}
                V^{t+1}&\leq \left(1-\gamma_t\mu\left(\frac{3}{8}-21\kappa B^2\right)\right)2\ep\left[f_{\mh}(x^{t-1})-f_{\mh}^*\right]+\left(1-\gamma_tL\right)\left(c_1\ep\left[\norm{\delta^t}^2\right]+c_2\ep\left[\Gamma^{t-1}\right]\right)\\
                &\quad+\gamma_t^2\left(\frac{162L}{h}+756\kappa L\right)\sigma^2+21\gamma_t\kappa G^2.
            \end{aligned}
        \end{equation}
        Since $f_{\mh}$ is $L$-smooth and satisfies the PL condition with parameter $\mu$, the following inequality holds for all $x\in\rr^d$:
        \[2\mu(f_{\mh}(x)-f_{\mh}^*)\leq \norm{\nabla f_{\mh}(x)}^2\leq 2L(f_{\mh}(x)-f_{\mh}^*).\]
        This implies that $\mu\leq L$. Furthermore, given the assumption that $\kappa B^2<1/56$, we obtain
        \begin{equation}
            1-\gamma_tL\leq 1-\gamma_t\mu\leq 1-\gamma_t\mu\left(\frac{3}{8}-21\kappa B^2\right)=1-\alpha_4\gamma_t.
        \end{equation}
        Further plugging this inequality into ~\eqref{eq:inter:pl proof} and recalling the definition of $V^t$ in ~\eqref{eq:lyapunov}, we obtain 
        \begin{align*}
            V^{t+1}&\leq \left(1-\alpha_4\gamma_t\right)\left(2\ep\left[f_{\mh}(x^{t-1})-f_{\mh}^*\right]+c_1\ep\left[\norm{\delta^t}^2\right]+c_2\ep\left[\Gamma_{\mh}^{t-1}\right]\right)\\
            &\quad+\gamma_t^2\left(\frac{162L}{h}+756\kappa L\right)\sigma^2+21\gamma_t\kappa G^2\\
            &=(1-\alpha_4\gamma_t)V^t+\alpha_5\gamma_t^2+\alpha_6\gamma_t.
        \end{align*}
        The result follows immediately by invoking Lemma \ref{lemma for PL}.
    \end{proof}
    
    \section{Proof of the Lower Bounds}\label{sec: Proof of the Lower Bounds}
    In this section, we prove that the upper bounds $\mo\left(\frac{\kappa G^2}{1-\kappa B^2}\right)$ and $\mo\left(\frac{\kappa\sigma^2}{1-\kappa B^2}\right)$ in our convergence analysis are tight. 
    Notably, it suffices to construct such problem instances and aggregation rules in a one-dimensional setting with $\abs{\mh}=n=2$ and $b=0$ to establish these lower bounds and to validate the tightness of the upper bounds, since the upper bounds are independent of $n$ and $b$. 
    
    The proof relies on the observation that the discrepancies among local updates can be exploited by an adversarial aggregation rule satisfying the robustness Assumption 2.5. In short, given honest updates $\{g_i\}_{i=1}^2$, the oracle aggregation rule first calculates the variance $V^2(g_1,g_2)=\frac{1}{2}\sum_{i=1}^2\norm{g_i-\bar{g}}^2$, and then outputs $\bar{g}-\sqrt{\kappa}V(g_1,g_2)$. Intuitively, this adversarial aggregation rule knows the honest workers, yet deliberately introduce an error controlled by $\kappa$ while satisfying the $(0,\kappa)$-robustness. With such an aggregation rule, the aggregated update no longer corresponds to the gradient of the original objective, but instead behaves as the gradient of a drifted one. Consequently, as the iteration proceeds, the iterates are gradually steered away from the true optimizer and converge to the minimizer of this drifted objective, which induces a Byzantine error.
    \subsection{Lower Bound for Heterogeneity}\label{sec:heterogeneity bound}
    We now establish the lower bound for the Byzantine error induced by data heterogeneity. 
    \begin{proof}[Proof of Theorem \ref{thm for lower bound for heterogeneity}]
    Consider the two honest workers in a one-dimensional setting with $\mh =\{1,2\}$ and $d=1$.
     We construct the following local objective functions for all $x\in\rr^1$ as
        \[f_1(x)=\frac{\mu+\delta}{2}x^2+\epsilon x,\; f_2(x)=\frac{\mu-\delta}{2}x^2-\epsilon x\]
        with $\mu>0$ and the parameters $ \delta, \epsilon$ to be chosen later. The global objective function $f_{\mh}$ is then
        \[f_\mh(x):=\frac{f_1(x)+f_2(x)}{2}=\frac{\mu}{2}x^2,\]
        which is smooth and strongly convex with a common parameter $\mu>0$. The heterogeneity between the local and global gradients is given by
        \[\frac{1}{2}\sum_{i=1}^2(\nabla f_i(x)-\nabla f_\mh(x))^2=\frac12\left((\delta x+\epsilon)^2+(-\delta x-\epsilon)^2\right)=(\delta x+\epsilon)^2.\]\par
        Given any $G,B>0$, we now choose $\delta,\epsilon$ such that the local functions $\{f_i:i=1,2\}$ satisfies the $(G,B)$-bounded dissimilarity condition, which requires
        \[(\delta x+\epsilon)^2=\frac{1}{2}\sum_{i=1}^2(\nabla f_i(x)-\nabla f_\mh(x))^2\leq G^2+B^2(\nabla f_\mh(x))^2=G^2+B^2\mu^2x^2\]
        for all $x \in \rr^1$. Rearranging terms, this is equivalent to that the quadratic inequality
        \[(\delta^2-B^2\mu^2)x^2+2\delta\epsilon x+\epsilon^2-G^2\leq 0\] holding for all $x \in \rr^1$. For a quadratic $ax^2 + bx +c \leq 0$ for all $x \in \rr^1$, it is sufficient that $a < 0$ and the discriminant $\Delta = b^2 - 4 a c \leq 0$. These conditions are simplified to 
        \begin{align*}
\delta^2<B^2\mu^2,\;\Delta=4\delta^2\epsilon^2-4(\delta^2-B^2\mu^2)(\epsilon^2-G^2)\leq 0,
        \end{align*}
        which is satisfied, for example, by setting $\delta=\sqrt{3}B\mu/2$ and $\epsilon=G/2$. 
        
        Now, for any given $\kappa>0$, we define the aggregation rule $\ma$.

        \textbf{Aggregation rule for R-DGD}. Given any $\kappa>0$, we define $\ma$ as 
        \[\ma(\nabla f_1(x),\nabla f_2(x))=\frac{\nabla f_1(x)+\nabla f_2(x)}{2}-\sqrt{\kappa}(\delta x+\epsilon)=(\mu-\sqrt{\kappa}\delta)x-\sqrt{\kappa}\epsilon.\]
        Then $\ma$ is $(b,\kappa)$-robust, since 
        \[\left(\ma(\nabla f_1(x),\nabla f_2(x))-\nabla f_{\mh}(x)\right)^2=\kappa(\delta x+\epsilon)^2=\kappa \cdot \frac12\sum_{i=1}^2\left(\nabla f_i(x)-\nabla f_{\mh}(x)\right)^2.\]
        However, performing R-DGD with the aggregation rule $\ma$ is equivalent to performing the vanilla gradient descent method to minimize the function 
        \begin{equation}\label{misleading function in heterogeneity}
            F(x)=\frac{\mu-\sqrt{\kappa}\delta}{2}x^2-\sqrt{\kappa}\epsilon x.
        \end{equation}

        \textbf{Aggregation rule for R-DGD-M}. For any iteration $t\geq 1$ and the past iterates $\{x^j\}_{j=1}^{t-1}$ (with $x^0$ as the initial iterate), the local momentum for each honest worker $i=1,2$ is 
        \[m_i^t=\beta_t m_i^{t-1}+(1-\beta_t)\nabla f_i(x^{t-1})=\cdots=\sum_{j=1}^t\left((1-\beta_j)\prod_{k=j+1}^{t}\beta_k\right)\nabla f_i(x^{j-1}),\]
        with $m_i^0=0$. Denote $\theta_j:=(1-\beta_j)\prod_{k=j+1}^{t}\beta_k$.
        The aggregated honest momentum is then
        \[m_{\mh}^t=\frac{m_1^t+m_2^t}{2}=\sum_{j=1}^t\theta_j\nabla f_{\mh}(x^{j-1}).\]
        Given any $\kappa>0$, we define the aggregation rule $\ma'$ for each $t\geq 1$ as 
        \begin{align*}
            \ma'(m_1^t,m_2^t)&=m_{\mh}^t-\sqrt{\kappa}\sum_{j=1}^t\theta_j(\delta x^{j-1}+\epsilon)\\
            &=\sum_{j=1}^t\theta_j\left(\nabla f_{\mh}(x^{t-j})-\sqrt{\kappa}(\delta x^{j-1}+\epsilon)\right).
        \end{align*}
        Since $\nabla f_i(x)-\nabla f_{\mh}(x)=\pm(\delta x+\epsilon)$, we have
        \begin{align*}
            \frac12\sum_{i=1}^2(m_i^t-m_{\mh}^t)^2&=\frac12\left(\left(\sum_{j=1}^t\theta_j(\delta x^{t-j}+\epsilon)\right)^2+\left(\sum_{j=1}^t\theta_j(-\delta x^{t-j}-\epsilon)\right)^2\right)\\
            &=\left(\sum_{j=1}^t\theta_j(\delta x^{t-j}+\epsilon)\right)^2\\
            &=\frac{1}{\kappa}(\ma'(m_1^t,m_2^t)-m_{\mh}^t)^2,
        \end{align*}
        which implies that $\ma'$ is $(b,\kappa)$-robust. However, performing R-DGD-M with this aggregation rule $\ma'$ is equivalent to performing gradient descent with momentum to minimize $F(x)$ in (\ref{misleading function in heterogeneity}). 
        
        Note that $\kappa B^2<1$, and that $F(x)$ is a quadratic function on $\rr$ with the quadratic coefficient 
        \[\frac{\mu-\sqrt{\kappa}\delta}{2}=\frac{\mu\left(1-\frac{\sqrt{3}}{2}\sqrt{\kappa}B\right)}{2}>0,\]
        and hence $F(x)$ is strongly convex.
        Consequently, by the classical convergence result of GD (with or without momentum) for strongly convex objectives, for sufficient large $T$, the iterate $x^{T-1}$ satisfies $\abs{x^{T-1}-x_F^*} \leq \abs{x_F^*}/4$, where 
        \[x_F^*=\frac{\sqrt{\kappa}\epsilon}{\mu-\sqrt{\kappa}\delta}\]
        is the minimizer of $F(x)$. Thus, 
        \[(x^{T-1})^2=(x^{T-1}-x_F^*+x_F^*)^2\geq (x^{T-1}-x_F^*)^2+(x_F^*)^2-2\abs{x^{T-1}-x_F^*}\abs{x_F^*}\geq \frac12 (x_F^*)^2\]
        and the gradient error at $x^{T-1}$ is lower bounded by
        \begin{align*}
            (\nabla f_{\mh}(x^{T-1}))^2=\mu^2(x^{T-1})^2 &\geq \frac{\mu^2}{2}(x_{F}^*)^2  =\frac{\mu^2\kappa\epsilon^2}{2(\mu-\sqrt{\kappa}\delta)^2}=\frac{1}{8}\frac{\kappa G^2}{\left(1-\sqrt{\frac{3\kappa}{4}}B\right)^2}\\
            &\geq \frac{1}{8}\frac{\kappa G^2}{1-\frac34\kappa B^2}=\mo\left(\frac{\kappa G^2}{1-\kappa B^2}\right),
        \end{align*}
        where the last inequality follows from $(1-a)^2\leq 1-a^2$ for $a\in[0,1]$.
        Finally, for the function value sub-optimality gap,  we have
        \[f_{\mh}(x^{T-1})=\frac{\mu}{2}(x_F^*)^2\geq \mo\left(\frac{\kappa G^2}{\mu(1-\kappa B^2)}\right).\]
        This verifies the tightness of the lower bound.
    \end{proof}
    \subsection{Lower Bound for Randomness}
    Before proving the lower bound $\mo\left(\frac{\sigma^2}{1-\kappa B^2}\right)$, we first introduce the following convergence results for SGD from the work of \citet{nguyen2019new} and \citet{gorbunov2020unified}.
    \begin{lemma}[\cite{nguyen2019new}, Theorems 9, 11]\label{advanced convergence result for SGD}
        Consider the following stochastic optimization problem:
        \[\min_{x\in\rr^d} F(x):=\ep_{\xi}\left[f(x;\xi)\right].\]
        Suppose that $F(x)$ is strongly convex with parameter $\mu > 0$, $f(x;\xi)$ is smooth with parameter $L$ and convex for every realization of $\xi$, and that $\mu \leq L$. Suppose that SGD 
        \[x^{t+1}=x^t-\gamma_t\nabla f(x;\xi),\]
       where the stepsize $\{\gamma_t\}$ satisfies $\gamma_t=1/(K+t)^q$ with constants $q, K>0$ and $\gamma_t\leq 1/(2L)$, is used to solve this problem. Then, we have 
        \[\lim_{t\to+\infty}\ep\left[\norm{x^t-x^*}^2\right]=0,\]
        where $x^*=\argmin_{x\in\rr^d}F(x)$.
    \end{lemma}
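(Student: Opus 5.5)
The plan is the standard mean-square analysis of SGD for a strongly convex average of convex, smooth components. It has two steps: first a one-step contraction for $a_t:=\ep[\norm{x^t-x^*}^2]$, then a deterministic sequence argument showing $a_t\to 0$ under the stepsize $\gamma_t=1/(K+t)^q$. I would take $q\in(0,1]$. The argument needs $\sum_t\gamma_t=\infty$, which holds only for $q\le 1$; for $q>1$ the total stepsize is finite and the iterates need not reach $x^*$.

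\textbf{Step 1 (gradient-noise bound at the optimum).} Let $N:=\ep_\xi\norm{\nabla f(x^*;\xi)}^2<\infty$. For each realization, $f(\cdot;\xi)$ is convex and $L$-smooth, so co-coercivity gives
\[\norm{\nabla f(x;\xi)-\nabla f(x^*;\xi)}^2\le 2L\bigl(f(x;\xi)-f(x^*;\xi)-\innerproduct{\nabla f(x^*;\xi)}{x-x^*}\bigr).\]
Taking expectation and using $\nabla F(x^*)=0$, then $\norm{a+b}^2\le 2\norm{a}^2+2\norm{b}^2$, yields
\[\ep_\xi\norm{\nabla f(x;\xi)}^2\le 4L\bigl(F(x)-F(x^*)\bigr)+2N.\]
This avoids any bounded-gradient assumption.

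\textbf{Step 2 (one-step recursion).} Expand
\[\norm{x^{t+1}-x^*}^2=\norm{x^t-x^*}^2-2\gamma_t\innerproduct{\nabla f(x^t;\xi_t)}{x^t-x^*}+\gamma_t^2\norm{\nabla f(x^t;\xi_t)}^2\]
and take the conditional expectation. Strong convexity gives
\[\innerproduct{\nabla F(x^t)}{x^t-x^*}\ge F(x^t)-F(x^*)+\tfrac{\mu}{2}\norm{x^t-x^*}^2.\]
Combined with Step 1, this gives
\[\ep\norm{x^{t+1}-x^*}^2\le(1-\mu\gamma_t)\norm{x^t-x^*}^2-2\gamma_t(1-2L\gamma_t)\bigl(F(x^t)-F(x^*)\bigr)+2N\gamma_t^2.\]
Since $\gamma_t\le 1/(2L)$, the middle term is nonpositive. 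Taking total expectations,
\[a_{t+1}\le(1-\mu\gamma_t)a_t+2N\gamma_t^2,\]
and $0<\mu\gamma_t\le \mu/(2L)<1$.

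\textbf{Step 3 (sequence lemma; the main obstacle).} This is where the specific stepsize enters. Fix $\varepsilon>0$. Because $\gamma_t\to0$, there is $t_0$ with $2N\gamma_t/\mu\le\varepsilon$ for all $t\ge t_0$. For such $t$, the recursion rewrites as
\[a_{t+1}-\varepsilon\le(1-\mu\gamma_t)(a_t-\varepsilon)+\gamma_t(2N\gamma_t-\mu\varepsilon)\le(1-\mu\gamma_t)(a_t-\varepsilon).\]
Iterating gives $\max\{a_t-\varepsilon,0\}\le \prod_{s=t_0}^{t-1}(1-\mu\gamma_s)\,\max\{a_{t_0}-\varepsilon,0\}$ (taking positive parts is valid since each factor lies in $(0,1)$). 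The product is at most $\exp\bigl(-\mu\sum_{s=t_0}^{t-1}\gamma_s\bigr)$, which tends to $0$ because $\sum_s (K+s)^{-q}=\infty$ for $q\le1$. Hence $\limsup_t a_t\le\varepsilon$ for every $\varepsilon>0$, so $a_t\to0$.

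The delicate points are only the admissible range of $q$ and the positivity of the factors $1-\mu\gamma_s$, which follows from $\mu\le L$ and $\gamma_s\le 1/(2L)$. If explicit rates are wanted, as in \citet{nguyen2019new}, one would instead multiply the recursion by a suitable power of $(K+t)$ and unroll, as in Lemma~\ref{lemma for PL}. Only the limit statement is needed here, so the $\varepsilon$-argument suffices.
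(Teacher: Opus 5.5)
Your proof is correct and reproduces the standard argument of \citet{nguyen2019new}, which the paper cites rather than proves: the component-wise co-coercivity bound on $\ep_\xi\norm{\nabla f(x;\xi)}^2$, the recursion $a_{t+1}\le(1-\mu\gamma_t)a_t+\mo(\gamma_t^2)$ for $\gamma_t\le 1/(2L)$, and a sequence argument that needs only $\gamma_t\to 0$ and $\sum_t\gamma_t=\infty$. Your restriction $q\in(0,1]$ is necessary, not cosmetic: for $q>1$ the example $f(x;\xi)=\frac{L}{2}x^2$ gives $x^t=x^0\prod_{s<t}(1-L\gamma_s)\not\to 0$ whenever $x^0\neq 0$, and $(0,1]$ is exactly the range the paper uses when it invokes the lemma in the proof of Theorem~\ref{thm for lower bound for randomness}; likewise, the finiteness of $\ep_\xi\norm{\nabla f(x^*;\xi)}^2$ that you assume is implicit in the statement and automatic in that application, since the noise there takes only four values.
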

    \begin{lemma}[\cite{gorbunov2020unified}, Theorem 4.1]\label{advanced nonasymptotic convergence result for SGD}
        Consider the following stochastic optimization problem:
        \[\min_{x\in\rr^d} F(x):=\ep_{\xi}\left[f(x;\xi)\right].\]
        Suppose that $F(x)$ is $\mu$-strongly convex and $L$-smooth for some constants $L\geq\mu>0$. Suppose that SGD is used to solve this problem
        \[x^{t+1}=x^t-\gamma_t\nabla f(x;\xi),\]
        and there exist constant $M,M_V\geq 0$ such that for all $x\in\rr^d$, the stochastic gradient $\nabla f(x;\xi)$ satisfies
        \[\ep_{\xi}\left[\nabla f(x;\xi)\right]=\nabla F(x),\;\ep_{\xi}\left[\norm{\nabla f(x;\xi)}^2\right]\leq M+M_V\norm{\nabla F(x)}^2.\]
        Then if the stepsize is chosen as $\gamma_t=\gamma\leq\frac{1}{M_VL}$, then the iterates $\{x^t\}$ generated by SGD satisfy 
        \[\ep\left[\norm{x^t-x^*}^2\right]\leq (1-\gamma\mu)^t\norm{x^0-x^*}^2+\frac{\gamma}{\mu} M.\]
        Consequently, for any $\epsilon>0$, if we take $\gamma=\mo(1/T^{\alpha})$ with $0<\alpha<1$ and $T$ large enough, then we have \[\ep\left[\norm{x^T-x^*}^2\right]\leq\epsilon.\]
    \end{lemma}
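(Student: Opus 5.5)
The plan is the standard one-step contraction argument for SGD on a strongly convex, smooth objective, with the bounded second moment $\ep_\xi[\norm{\nabla f(x;\xi)}^2]\le M+M_V\norm{\nabla F(x)}^2$ handling the noise, followed by unrolling. Write $g^t:=\nabla f(x^t;\xi^t)$ for the stochastic gradient used at step $t$. Let $\ep_t[\cdot]$ denote conditional expectation given $x^t$.

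\textbf{Step 1 (one-step expansion).} Start from
\[
\norm{x^{t+1}-x^*}^2=\norm{x^t-x^*}^2-2\gamma\innerproduct{g^t}{x^t-x^*}+\gamma^2\norm{g^t}^2 .
\]
Taking $\ep_t[\cdot]$ and using unbiasedness together with the second-moment bound gives
\[
\ep_t\norm{x^{t+1}-x^*}^2\le \norm{x^t-x^*}^2-2\gamma\innerproduct{\nabla F(x^t)}{x^t-x^*}+\gamma^2 M+\gamma^2M_V\norm{\nabla F(x^t)}^2 .
\]

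\textbf{Step 2 (curvature).} By $\mu$-strong convexity,
\[
\innerproduct{\nabla F(x)}{x-x^*}\ge F(x)-F(x^*)+\frac{\mu}{2}\norm{x-x^*}^2 .
\]
By $L$-smoothness and optimality of $x^*$, $\norm{\nabla F(x)}^2\le 2L(F(x)-F(x^*))$. Substituting both bounds gives
\[
\ep_t\norm{x^{t+1}-x^*}^2\le(1-\gamma\mu)\norm{x^t-x^*}^2-2\gamma\left(1-\gamma M_VL\right)\left(F(x^t)-F(x^*)\right)+\gamma^2M .
\]

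\textbf{Step 3 (the key step).} This is where the stepsize condition enters. The assumption $\gamma\le 1/(M_VL)$ makes the function-gap term nonpositive, so it can be dropped. We also need $0\le 1-\gamma\mu$ for the unrolling to be monotone. This follows because Jensen's inequality gives $\ep\norm{g}^2\ge\norm{\nabla F}^2$, which forces $M_V\ge1$ when the bound is tight in the large-gradient regime. Combined with $\mu\le L$, this yields $\gamma\mu\le\gamma M_VL\le1$.

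\textbf{Step 4 (unrolling).} Take total expectation and unroll the recursion:
\[
\ep\norm{x^t-x^*}^2\le(1-\gamma\mu)^t\norm{x^0-x^*}^2+\gamma^2M\sum_{k\ge0}(1-\gamma\mu)^k\le(1-\gamma\mu)^t\norm{x^0-x^*}^2+\frac{\gamma M}{\mu}.
\]
This is the claimed bound.

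\textbf{Step 5 (consequence for $\gamma=c/T^\alpha$).} For $T$ large we have $\gamma\le 1/(M_VL)$. The first term is then at most $\exp(-\mu cT^{1-\alpha})\norm{x^0-x^*}^2$, which tends to $0$ because $\alpha<1$. The second term $cM/(\mu T^\alpha)$ tends to $0$ because $\alpha>0$. Hence both terms fall below $\epsilon/2$ for $T$ sufficiently large, which gives $\ep\norm{x^T-x^*}^2\le\epsilon$.
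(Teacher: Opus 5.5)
The paper gives no proof of this lemma and simply imports it from Gorbunov et al.\ (Theorem 4.1). Your argument is correct and is precisely the standard one-step contraction proof behind that result: expand, use strong convexity and $\norm{\nabla F(x)}^2\le 2L(F(x)-F(x^*))$, drop the gap term via $\gamma\le 1/(M_VL)$, then unroll. The only thing to tighten is Step 3. Strong convexity gives $\norm{\nabla F(x)}\ge\mu\norm{x-x^*}$, so $\nabla F$ is unbounded, and the Jensen bound $\norm{\nabla F(x)}^2\le M+M_V\norm{\nabla F(x)}^2$ for all $x$ then rules out $M_V<1$. Hence $M_V\ge1$ holds unconditionally, with no ``tightness'' involved, and this is what supplies the condition $\gamma\mu\le 1$ that the cited theorem imposes explicitly but the paper's statement omits.
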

    
    Using these convergence results, we can establish the lower bound for the Byzantine error induced by stochastic noise. The proof follows a similar yet more intricate line of reasoning in Appendix~\ref{sec:heterogeneity bound}. For a construction of local objective functions satisfying the $(0,B)$-bounded dissimilarity and a rule of generating stochastic gradients satisfying Assumption~\ref{sfo}, we design an adversarial aggregation rule such that the robustness assumption is satisfied for every realization of the stochastic gradients. Moreover, the aggregated direction is an unbiased estimator of the gradient of a drifted objective function and satisfies the requirements in Lemmas~\ref{advanced convergence result for SGD} and \ref{advanced nonasymptotic convergence result for SGD}. Consequently, when the number of iterations is sufficiently large, the sequence of iterates converges in expectation to the minimizer of the drifted objective rather than that of the original one, thereby giving rise to a Byzantine error.
    \begin{proof}[Proof of Theorem \ref{thm for lower bound for randomness}]
    First, we consider two honest workers $\mh =\{1,2\}$ in a one-dimensional setting. Given constant $B\geq 0$, the local objective functions are defined as
        \[f_1(x)=\frac{(1+B)\mu}{2}x^2,\;f_2(x)=\frac{(1-B)\mu}{2}x^2.\]
       The global objective function $f_{\mh}$ is then
        \[f_{\mh}(x)=\frac{f_1(x)+f_2(x)}{2}=\frac{\mu}{2}x^2,\]
      which is smooth and strongly convex with a common parameter $\mu$. Note that 
        \[\frac12\sum_{i=1}^2(\nabla f_i(x)-\nabla f_{\mh}(x))^2=B^2\mu^2x^2=B^2(\nabla f_{\mh}(x))^2,\]
        hence the local functions $\{f_1,f_2\}$ satisfy the $(0,B)$-bounded dissimilarity assumption. 
        
        Next, we construct the unbiased stochastic gradients for each local objective function with bounded variance, satisfying Assumption \ref{sfo}. Given $\sigma>0$, let $\xi_1,\xi_2$ be independent Bernoulli random variables with $\mathbb{P}(\xi_i=0)=\mathbb{P}(\xi_i=1)=1/2$. For any $i\in\{1,2\}$, the stochastic gradients are defined as follows:
        \[
            g_i(x;\xi_i)=\begin{cases}
            \nabla f_i(x)+\sigma, &\text{if}\;\xi_i=0,\\
            \nabla f_i(x)-\sigma, &\text{if}\;\xi_i=1.\\
        \end{cases}
        \]
        These satisfy Assumption \ref{sfo} since for any $i\in\{1,2\}$, 
        \[\ep_{\xi_i}[g_i(x;\xi_i)]=\nabla f_i(x),\;\ep_{\xi_i}\left[(g_i(x)-\nabla f_i(x))^2\right]=\sigma^2. \] Let $\xi=\xi_1+\xi_2$. The average stochastic gradient is
        \[\bar{g}(x;\xi):=\frac{g_1(x;\xi_1)+g_2(x;\xi_2)}{2}=\mu x+\frac12\left[-\xi\sigma+(2-\xi)\sigma\right]=\mu x+(1-\xi)\sigma,\]
        with the expectation $\ep[\bar{g}(x;\xi)]=\mu x$.

        For any given $\kappa>0$, we define the output of a specific aggregation rule $\ma$ for each realization of $(\xi_1,\xi_2)$. Let
        \[\nu(x;\xi_1,\xi_2):=\frac12\sum_{i=1}^2(g_i(x;\xi_i)-\bar{g}(x;\xi))^2.\]
        We enumerate all the possible realizations:
        
        \textbf{Case I}: $\xi_1=\xi_2=0$. Then 
        \[g_1(x;\xi_1)=(1+B)\mu x+\sigma,\; g_2(x;\xi_2)=(1-B)\mu x+\sigma,\;\bar{g}_{\xi}(x)=\mu x+\sigma,\]
        and hence 
       \[\nu(x;\xi_1,\xi_2)=\frac{(B\mu x)^2+(-B\mu x)^2}{2}=(B\mu x)^2.\]
        \textbf{Case II}: $\xi_1=1,\;\xi_2=0$. Then 
        \[g_1(x;\xi_1)=(1+B)\mu x-\sigma,\;g_2(x;\xi_2)=(1-B)\mu x+\sigma,\;\bar{g}_{\xi}(x;\xi)=\mu x,\]
        and hence 
        \[\nu(x;\xi_1,\xi_2)=\frac{(B\mu x-\sigma)^2+(-B\mu x+\sigma)^2}{2}=(B\mu x-\sigma)^2.\]
        \textbf{Case III}: $\xi_1=0,\;\xi_2=1$. Then 
        \[g_1(x;\xi_1)=(1+B)\mu x+\sigma,\; g_2(x;\xi_2)=(1-B)\mu x-\sigma,\;\bar{g}(x;\xi)=\mu x,\]
        and hence 
        \[\nu(x;\xi_1,\xi_2)=\frac{(B\mu x+\sigma)^2+(-B\mu x-\sigma)^2}{2}=(B\mu x+\sigma)^2.\]
        \textbf{Case IV}: $\xi_1=\xi_2=1$. Then 
        \[g_1(x;\xi_1)=(1+B)\mu x-\sigma,\;g_2(x;\xi_2)=(1-B)\mu x-\sigma,\;\bar{g}(x;\xi)=\mu x-\sigma,\]
        and hence 
        \[\nu(x;\xi_1,\xi_2)=\frac{(B\mu x)^2+(-B\mu x)^2}{2}=(B\mu x)^2.\]
        Now we define the aggregation rule $\ma$ as 
        \[\ma(x;\xi_1,\xi_2)=\bar{g}(x;\xi)-\sqrt{\kappa}W(x;\xi_1,\xi_2),\]
        where 
        \begin{equation}\label{drift gradient for lower bound}
            W(x;\xi_1,\xi_2):=\begin{cases}
            B\mu x,&\text{if}\;\xi_1=\xi_2,\\
            -B\mu x+\sigma, &\text{if}\;\xi_1=1\;\mathrm{and}\;\xi_2=0,\\
            B\mu x+\sigma,  &\text{if}\;\xi_1=0\;\mathrm{and}\;\xi_2=1.
        \end{cases}
        \end{equation}
        Then for any realization of $\xi_1$ and $\xi_2$, we have 
        \[\left(\ma(x;\xi_1,\xi_2)-\bar{g}(x;\xi)\right)^2=\kappa\left(W(x;\xi_1,\xi_2)\right)^2=\kappa \nu(x;\xi_1,\xi_2),\]
       which implies that $\ma$ is a $(b,\kappa)$-robust aggregation rule satisfying Assumption \ref{aggregation}. 
        
        Now we define the function 
        \begin{align}\label{func:F:random}
        F(x)&=\frac{\mu}{2}\left(1-\frac{\sqrt{\kappa}}{2}B\right)x^2-\frac{\sqrt{\kappa}}{2}\sigma x.
        \end{align}
        Since $\kappa B^2<1$, $F(x)$ is strongly convex, and its global minimizer is
        \[x^*_F=\frac{\frac{\sqrt{\kappa}}{2}\sigma}{\mu\left(1-\frac{\sqrt{\kappa}}{2}B\right)}. \]
        Note that 
        \[\ep_{\xi_1,\xi_2}\left[W(x;\xi_1,\xi_2)\right]=\frac12(B\mu x)+\frac14(-B\mu x+\sigma)+\frac14(B\mu x+\sigma)=\frac12(B\mu x+\sigma).\]
        Then, $\ma(x;\xi_1,\xi_2)$ is an unbiased estimator of $\nabla F(x)$ since
        \begin{align*}  \ep_{\xi_1,\xi_2}\left[\ma(x;\xi_1,\xi_2)\right]&=\ep_{\xi_1,\xi_2}\left[\bar{g}(x;\xi)\right]-\sqrt{\kappa}\ep_{\xi_1,\xi_2}\left[W(x;\xi_1,\xi_2)\right]\\
        &=\mu x-\frac{\sqrt{\kappa}}{2}(B\mu x+\sigma)=\mu\left(1-\frac{\sqrt{\kappa}}{2}B\right)x-\frac{\sqrt{\kappa}}{2}\sigma\\
        &=\nabla F(x).
        \end{align*}
Therefore, treating $\ma(x;\xi_1,\xi_2)$ as a stochastic gradient of $F(x)$, running D-RSGD with this aggregation rule is equivalent to minimizing $F(x)$ with vanilla SGD. 
        
  Here, we verify that the assumptions in Lemma \ref{advanced convergence result for SGD} hold. Since $\kappa B^2<1$, we have $1-\frac{\sqrt{\kappa}}{2}B>0$ and hence the function $F(x)$ defined in \eqref{func:F:random} is smooth and strongly convex. Next, we construct the function $f(x;\xi_1,\xi_2)$: 
        \[f(x;\xi_1,\xi_2):=\frac{\mu}{2}x^2+(1-\xi)\sigma x-\sqrt{\kappa}\tilde{W}(x;\xi_1,\xi_2),\]
        where $\xi = \xi_1 + \xi_2$ and $\tilde{W}(x;\xi_1,\xi_2)$ is defined as:
        \[\tilde{W}(x;\xi_1,\xi_2)=\begin{cases}
            \frac12 B\mu x^2,&\text{if}\;\xi_1=\xi_2,\\
            -\frac12B\mu x^2+\sigma x, &\text{if}\;\xi_1=1\;\mathrm{and}\;\xi_2=0,\\
            \frac12 B\mu x^2+\sigma x, &\text{if}\;\xi_1=0\;\mathrm{and}\;\xi_2=1.
        \end{cases}\]
        Thus, for any realization of $(\xi_1,\xi_2)$, we have $\ma(x;\xi_1,\xi_2)=\nabla f(x;\xi_1,\xi_2)$. 
   By simple calculations, we obtain
        \begin{align*}
            \ep_{\xi_1,\xi_2}\left[\tilde{W}(x;\xi_1,\xi_2)\right]&=\frac12\left[\frac12 B\mu x^2\right]+\frac14\left[-\frac12B\mu x^2+\sigma x\right]+\frac14\left[\frac12 B\mu x^2+\sigma x\right]\\
            &=\frac14B\mu x^2+\frac12\sigma x;\\
            \ep_{\xi_1,\xi_2}\left[(1-\xi)\sigma x\right]&=0.
        \end{align*}
        Thus, $f(x;\xi_1,\xi_2)$ is the unbiased estimate of $F(x)$ for any realization of $(\xi_1,\xi_2)$, i.e., 
        \[\ep_{\xi_1,\xi_2}\left[f(x;\xi_1,\xi_2)\right]=\frac{\mu}{2}x^2-\sqrt{\kappa}\left( \frac14B\mu x^2+\frac12\sigma x\right)=F(x).\]
        Moreover, since $\kappa B^2<1$, for any realization of $(\xi_1,\xi_2)$, $f(x;\xi_1,\xi_2)$ is a quadratic  with positive leading coefficient, thus is smooth and strongly convex.  Consequently, the assumptions in Lemma \ref{advanced convergence result for SGD} hold. 
        
        We now verify that the assumptions in Lemma \ref{advanced nonasymptotic convergence result for SGD} hold. Since we have already confirmed the required assumptions of $F(x)$ in Lemma \ref{advanced convergence result for SGD}, 
        we focus here on the assumptions w.r.t. stochastic gradients, i.e., that there exist constants $M,M_V\geq 0$ such that 
        \[\mathrm{Var}(\ma(x;g_1,g_2))\leq M+M_V|\nabla F(x)|^2.\]
        Recalling that $\bar{g}(x,\xi)=\mu x+(1-\xi)\sigma$, we obtain
        \begin{align*}
            \mathrm{Var}(\bar{g}(x;\xi))=\ep_{\xi}\left[\left(\bar{g}(x;\xi)-\ep_{\xi}[\bar{g}(x;\xi)]\right)^2\right]=\ep_{\xi}\left[(1-\xi)^2\sigma^2\right]=\frac12\sigma^2.
        \end{align*}
        For $W(x;\xi_1,\xi_2)$ defined in~\eqref{drift gradient for lower bound}, by using the inequality $(a+b)^2\leq 2a^2+2b^2$, we have
        \begin{align*}
            \mathrm{Var}(W(x;\xi_1,\xi_2))&=\ep_{\xi_1,\xi_2}\left[\left(W(x;\xi_1,\xi_2)-\ep_{\xi_1,\xi_2}[W(x;\xi_1,\xi_2)]\right)^2\right]\\
            &=\frac12 \left(\frac12 B\mu x-\frac12\sigma\right)^2+\frac14\left(-\frac32 B\mu x+\frac12\sigma\right)^2+\frac14\left(\frac{1}{2}B\mu x+\frac12\sigma\right)^2\\
            &\leq \left(\frac12 B\mu x\right)^2+\left(\frac12\sigma\right)^2+\frac12\left(\left(\frac32 B\mu x\right)^2+\left(\frac12\sigma\right)^2\right)+\frac12\left(\left(\frac12 B\mu x\right)^2+\left(\frac12\sigma\right)^2\right)\\
            &=\frac32 B^2\mu^2x^2+\frac{1}{2}\sigma^2.
        \end{align*}
       Since $\mathrm{Var}(a+b)\leq 2\mathrm{Var}(a)+2\mathrm{Var}(b)$, we have
        \begin{align*}
            \mathrm{Var}(\ma(x;\xi_1,\xi_2))&\leq 2\mathrm{Var}(\bar{g}(x;\xi))+2\kappa\mathrm{Var}(W(x;\xi_1,\xi_2))\\
            &\leq 3\kappa B^2\mu^2x^2+(\kappa+1)\sigma^2\\
            &\leq \frac{3\kappa B^2}{\left(1-\frac{\sqrt{\kappa}}{2}B\right)^2}\left[2\left(\mu\left(1-\frac{\sqrt{\kappa}}{2}B\right)x-\frac{\sqrt{\kappa}}{2}\sigma\right)^2+2\left(\frac{\sqrt{\kappa}}{2}\sigma\right)^2\right]+(\kappa+1)\sigma^2,
        \end{align*}
        where we use the inequality $(a+b)^2\leq 2a^2+2b^2$ in the last inequality. 
        Consequently,
        \begin{align*}
            \ep\left[\abs{\ma(x;\xi_1,\xi_2)}^2\right]&\leq \mathrm{Var}(\ma(x;\xi_1,\xi_2))+\abs{\nabla F(x)}^2\\
            &\leq\left(\frac{6\kappa B^2}{\left(1-\frac{\sqrt{\kappa}}{2}B\right)^2}+1\right)\abs{\nabla F(x)}^2+\left(\frac{3\kappa^2B^2}{2\left(1-\frac{\sqrt{\kappa}}{2}B\right)^2}+(\kappa+1)\right)\sigma^2,
        \end{align*}
        and the assumption on the variance of stochastic gradients in Lemma \ref{advanced nonasymptotic convergence result for SGD} holds. 
        
        By the conclusions of Lemma \ref{advanced convergence result for SGD} and Lemma \ref{advanced nonasymptotic convergence result for SGD}, for sufficient large $T$  and stepsizes $\{\eta_t\}$ with $\gamma_t=\left(\frac{1}{(K+t)^a}\right)$ where $0<a\leq 1$ or $\gamma_t=\mo\left(1/T^{a}\right)$ where $0<a<1$, we have  $\ep\left[\abs{x^{T-1}-x_F^*}\right]\leq \frac14\abs{x_F^*}$. Then
        \[\ep[(x^{T-1})^2]=\ep\left[\left(x^{T-1}-x_F^*+x_F^*\right)^2\right]\geq \ep\left[(x_F^*)^2-2\abs{x_F^*}\abs{x^{T-1}-x_F^*}\right]\geq \frac12(x_F^*)^2.\] 
        Thus, we obtain
        \begin{align*}
            \ep[(\nabla f_{\mh}(x^{T-1}))^2]&=\mu^2\ep[(x^{T-1})^2]\geq\frac{\mu^2}{2}(x_F^*)^2\\
            &=\frac{\kappa\sigma^2}{8\left(1-\frac{\sqrt{\kappa}}{2}B\right)^2}\geq \frac{\kappa\sigma^2}{8\left(1-\frac{\kappa}{4}B^2\right)}=\mo\left(\frac{\kappa\sigma^2}{1-\kappa B^2}\right).
        \end{align*}
        Finally, for the function value sub-optimality gap, we have
        \[\ep\left[f_{\mh}(x^{T-1})-f_{\mh}^*\right]=\frac{\mu}{2}\ep[(x^{T-1})^2]\geq \frac{\mu}{4}(x_F^*)^2=\mo\left(\frac{\kappa\sigma^2}{\mu(1-\kappa B^2)}\right).\]
        The lower bounds are thus verified.
    \end{proof}
    
    \section{Experimental Details}\label{Details and additional experiments}
    \subsection{Detailed Experimental Setup for the Synthetic Experiment}
    \textbf{Model COnstruction.} In the synthetic experiment,
    we consider a one-dimensional optimization problem with $n$ workers, all of which are honest (i.e., $b=0$). The local objective functions are constructed as
    \[f_i(x)=\begin{cases}
        f_1(x) = \frac{1}{2}ax^2+cx,&\;\mathrm{if}\;i=1,2,\ldots,k,\\
        f_2(x) = \frac{1}{2}ax^2-cx,&\;\mathrm{if}\;i=k+1,k+2,\ldots,2k,\\
        f_3(x) = \frac{1}{2}(a+d)x^2,&\;\mathrm{if}\;i=2k+1,2k+2,\ldots,n,
    \end{cases}\]
    where constants $a$, $c$, $d$, and $k$ are parameters to be specified.
    
    Given any $G,B \ge 0$, we construct $\{f_i\}_{i=1}^n$ so that they exactly satisfy the $(G,B)$-bounded dissimilarity condition: 
    \[\frac1n\sum_{i=1}^n\abs{\nabla f_i(x)-\nabla f(x)}^2=G^2+B^2\abs{\nabla f(x)}^2,\;\forall x\in\rr,\]
    where $f=\frac1n\sum_{i=1}^nf_i$. To this end, given any $G,B\geq 0$, it suffices to set the parameters $a,c,d,k$ such that:
    \begin{equation}\label{eq:synthetic papameters}
        c=\sqrt{\frac{n}{2k}}G, B^2<\frac{2k}{n-2k},\;\text{and}\;\frac{\sqrt{2k(n-2k)}-(n-2k)B}{n}d=aB.
    \end{equation}
    
    \textbf{The constructed aggregation rule.} To isolate the effect of $\kappa$, we adopt an adversarial aggregation rule as discussed in Appendix C: Given the honest updates $g_1,g_2,\ldots,g_n$, the variance 
    $V^2(g_1,g_2,\ldots,g_n)=\frac1n\sum_{i=1}^n\abs{g_i-\bar{g}}^2$
    with $\bar{g}=\frac1n\sum_{i=1}^ng_i$ is calculated, and the aggregation rule outputs 
    \[\ma(g_1,g_2,\ldots,g_n)=\bar{g}\pm\sqrt{\kappa V^2(g_1,g_2,\ldots,g_n)},\]
     where the sign is chosen adversarially to push the iterates away from the exact optimum. This rule satisfies the $(0,\kappa)$-robustness condition.

    \textbf{Parameter settings.} We fix $a=G=1.0$, $n=20$, and $k=7$, and compute $c$ and $d$ according to \eqref{eq:synthetic papameters}. For fixed $\kappa$ and $G$, we vary $B^2$ to study its effect on the final error of R-DSGD and R-DSGD-M under the above aggregation rule. We also vary $\kappa$ to examine its impact. For each pair $(\kappa, B^2)$, we tune the stepsize using two schemes: (i) diminishing stepsizes:
    \[\gamma_t=\begin{cases}
        \gamma_0 & \;\text{if}\; 0<t<T/2,\\
        \frac{\gamma_0}{t+1-T/2} &\;\text{if}\; T/2\leq t\leq T,
    \end{cases}\]
    and (ii) constant stepsizes $\gamma_t=\gamma_0$.  The initial stepsize $\gamma_0$ is selected from ${0.5, 0.2, 0.1, 0.05, 0.01, 0.005, 0.001}$, and the total number of iterations $T$ is chosen from ${10, 20, 50, 100, 200, 1000, 2000}$. We report the best performance (i.e., the smallest error) over all configurations.
    \subsection{Detailed Experimental Setups for MNIST and CIFAR-10 Experiments}
    \textbf{Models.}
    For experiments on the MNIST dataset, we use a two-layer multilayer perceptron (MLP) consisting of one hidden layer of 200 nodes. The input images are flattened into 784-dimensional vectors, passed through a fully connected layer with ReLU activation, and mapped to a fully connected output layer with 10 neurons. The network is trained using the cross-entropy loss function.
    For experiments on the CIFAR-10 dataset, we implement a ResNet-20 deep learning model \cite{cifar}. The batch normalization (BN) layers in the ResNet-20 model are replaced with group normalization layers since BN layers have a poor performance with heterogeneous data across workers.
    
    \textbf{Parameter Settings.} For the MNIST task, we use the following stepsize schedule in training MLP:
    \[\gamma_t=\begin{cases}
        \gamma_0, & \mathrm{if}\;t\leq \frac{T}{15},\\
        0.5\gamma_0, & \mathrm{if}\;\frac{T}{15}<t\leq \frac{T}{6},\\
        0.25\gamma_0, & \mathrm{if}\;\frac{T}{6}<t\leq \frac{T}{3},\\
        0.125\gamma_0, & \mathrm{if}\;\frac{T}{3}<t\leq T,
    \end{cases}\]
    where the initial stepsize $\gamma_0$ is selected from the grid $\{0.5, 0.2,0.1,0.05,0.025, 0.01\}$. We choose the momentum parameter $\beta\in\{0.0, 0.9\}$ for R-DSGD and R-DSGD-M, respectively. The total number of iterations is set to $T=30000$ for a batch size of 1, and $T=15000$ for a batch size of 64. 

    In the CIFAR-10 task, each algorithm is run for $50000$ iterations with a cosine annealing learning rate schedule \cite{loshchilov2017sgdr}. Specifically, the stepsize at iteration $t$ is given by $\gamma_t=\frac{1+\cos(t\pi/50000)}{2}\gamma_0$, where $\gamma_0$ is selected from $\gamma_0\in\{0.2,0.1,0.05,0.025,0.01\}$. For R-DSGD-M, the momentum parameter is tuned over $\beta \in \{0.5, 0.6, 0.7, 0.8, 0.9\}$. We report the best final training loss and top-1 test accuracy across all configurations.

    \textbf{Aggregation Rules.}
    In our experiments, we consider the following robust aggregation rules:
    \begin{itemize}
        \item \textbf{Krum} \cite{blanchard2017machine}. Given $n$ vectors $x_1,x_2,\ldots,x_n\in\rr^d$, Krum selects a vector from from $\{x_1,x_2,\ldots,x_n\}$ that is closest to its $n-b-1$ neighbors. Specifically, for each $i\in[n]$, sort $\{x_1,x_2,\ldots,x_n\}\setminus\{x_i\}$ such that
			\[\norm{x_i-x_{i_1}}\leq\norm{x_i-x_{i_2}}\leq\cdots\leq\norm{x_i-x_{i_{n-1}}},\]
			and define $\hat{d}_i=\sum_{j=1}^{n-b-1}\norm{x_i-x_{i_j}}^2$. The output of Krum is then given by 
			\[\mathrm{Krum}(x_1,x_2,\ldots,x_n)=x_{i^*},\;\text{where}\; i^*\in\argmin_{i \in [n]} \hat{d}_i.\]
            Krum can be extended to Multi-Krum as follows. Given an additional parameter $q$ and suppose that $\hat{d}_{i_1}\leq\hat{d}_{i_2}\leq\cdots\leq\hat{d}_{i_n}$, then the output of Multi-Krum is given by
            \[\mathrm{Multi\text{-}Krum}(x_1,x_2,\ldots,x_n)=\frac{1}{q}\sum_{j=1}^qx_{i_j}.\]
        When $q=1$, Multi-Krum reduces to Krum. In our experiments, we observe that our theories can also explain the performance of Multi-Krum with various choices of $q$ , so we only report results for choosing $q=b$.
        \item \textbf{Coordinate-wise median (CwM)} \cite{yin2018byzantine}. Given $n$ vectors $x_1,x_2,\ldots,x_n\in\rr^d$, CwM outputs a vector $\mathrm{CwM}(x_1,x_2,\ldots,x_n)\in\rr^d$ whose $i$-th coordinate is given by
        \[[\mathrm{CwM}(x_1,x_2,\ldots,x_n)]_i=\mathrm{Median}([x_1]_i,[x_2]_i,\ldots,[x_n]_i),\]
        where $[\cdot]_i$ denotes the $i$-th coordinate of a vector, and $\mathrm{Median}$ represents the median of $n$ real numbers.
        \item \textbf{Coordinate-wise trimmed mean (CwTM)} \cite{yin2018byzantine}. Suppose that $n$ vectors $x_1,x_2,\ldots,x_n\in\rr^d$ and a parameter $q$ representing the estimated number of Byzantine workers are given. Let $\tau_k$ be a permutation on $[n]$ such that 
		$[x_{\tau_k(1)}]_k\leq[x_{\tau_k(2)}]_k\leq\cdots\leq[x_{\tau_k(n)}]_k$. CwTM outputs a vector $\mathrm{CwTM}(x_1,x_2,\ldots,x_n)$ in $\rr^d$, whose $k$-th coordinate is given by
			\[[\mathrm{CwTM}(x_1,x_2,\ldots,x_n)]_k=\frac{1}{n-2q}\sum_{j=q+1}^{n-q}[x_{\tau_k(j)}]_k.\]
        In our experiments, we set $q=b$ since the number of Byzantine workers is known.
        \item \textbf{Geometric median} \cite{chen2017distributed, wu2020federated}. Given $n$ vectors $x_1,x_2,\ldots,x_n\in\rr^d$, GM outputs 
        \[\mathrm{GM}(x_1,x_2,\ldots,x_n)\in\argmin_{x\in\rr^d}\sum_{i=1}^n\norm{x_i-x}.\]
        The solution is unique unless $x_1,x_2,\ldots,x_n$ are on a common line. Since this optimization problem generally does not admit a closed-form solution, we compute an approximate solution using the smoothed Weiszfeld algorithm:\[\begin{cases}
                \beta_i^{(k)} = \frac{1}{\max\{\nu,\norm{v^{(k)}-x_i}\}},\\
            v^{(k+1)} =\frac{\sum_{i=1}^n\beta_i^{(k)}x_i}{\sum_{i=1}^n\beta_i^{(k)}},
            \end{cases}\]
        where we use the superscript $(k)$ here to denote the iteration indices and $\nu$ is a positive small number to avoid division by zero. In our experiments, we run the smoothed Weiszfeld algorithm for 50 iterations per aggregation, which yields an approximation error that is negligible in practice.
    \end{itemize}
    \textbf{Attacks.}
    In our experiments, we evaluate the robustness of the aggregation rules against the following three attacks:
    \begin{itemize}
        \item \textbf{Sign flip} \cite{zhu2021byzantine}. After receiving the model $x^t$ at the $t$-th iteration, each Byzantine workers first generates a honest stochastic gradient $g_i^t$, and then submits the negated gradient $-g_i^t$ to the server.
        \item \textbf{Lable flip} \cite{li2019rsa}. Each sample $(x,y)$ in the local dataset of a Byzantine worker is modified to $(x,9-y)$, where $x$ denotes the features and $y\in \left\lbrace 0,1,\ldots,9 \right\rbrace$ is the label. The Byzantine workers then use these poisoned samples to generate stochastic gradients once they receive the model from the server.
        \item \textbf{A little is enough} \cite{baruch2019little}. The honest updates $\left\lbrace g_i\right\rbrace_{i\in\mh}$ in each round as well as the aggregation rule $\ma$ used by the server are assumed to be known by the Byzantine workers. Each Byzantine worker calculates the mean $\mu$ and the standard deviation $\sigma$ of honest updates as 
        \[\mu=\frac{1}{h}\sum_{i\in\mh}g_i,\;\sigma=\sqrt{\sum_{i\in\mh}(x-\mu)^T(x-\mu)}.\]
        The $j$-th Byzantine worker then constructs its update as $g_j = \mu + \alpha \sigma$, where $\alpha$ is chosen greedily from a candidate set $\{-2,-1,-0.5,0.5,1,2\}$ to maximize the deviation of the aggregated update $\ma(g_1,g_2,\ldots,g_n)$ from the honest mean $\mu$ in terms of $L_2$ distance.
    \end{itemize}
    

\end{document}